\documentclass[12pt,letterpaper,reqno]{amsart}
\usepackage{amssymb,a4wide,amsthm,amsmath,amsfonts,xcolor}
\usepackage{mathrsfs}

\usepackage{verbatim}
\usepackage{hyperref}
\usepackage[utf8]{inputenc}
\usepackage[T1]{fontenc}
\newcommand{\pb}{\;\;\;}
\newcommand{\etalchar}[1]{$^{#1}$}
\usepackage{tikz}
\usetikzlibrary{calc,fadings,decorations.pathreplacing}
\usepackage{standalone}
\usepackage{pgfplots}
\usepackage{pgfplotstable}
\usepgfplotslibrary{fillbetween}
\usetikzlibrary{patterns}
\newcommand\cadlag{c{\`a}dl{\`a}g\,\,}


\newcommand{\D}{\mathrm{D}}

\DeclareMathOperator{\Leb}{Leb}
\DeclareMathOperator{\supp}{supp}

\newcommand\dela[1]{}
\newcommand\coma[1]{{\color{red} #1}} 

\newcommand{\eps}{\varepsilon}

\definecolor{ForestGreen}{rgb}{0.1,1,0.3}

\let\emptyset \undefined
\let\ge       \undefined
\let\le       \undefined
\newsymbol\le          1336  
\newsymbol\ge          133E  \let\geq\ge
\newsymbol\emptyset    203F
\newsymbol\notle       230A
\newsymbol\notge       230B
\theoremstyle{plain}
\newtheorem{theorem}{Theorem}[section]
\theoremstyle{remark}
\newtheorem{remark}[theorem]{Remark}

\theoremstyle{plain}
\newtheorem{corollary}[theorem]{Corollary}
\newtheorem{lemma}[theorem]{Lemma}
\newtheorem{Prop}[theorem]{Proposition}
\newtheorem{definition}[theorem]{Definition}

\newtheorem{assumption}[theorem]{Assumption}
\newtheorem{assumptionNotation}[theorem]{Assumption and Notation}

\newtheorem{Assumption}[theorem]{Assumption}
\numberwithin{equation}{section}

\newcommand\del[1]{}

\def\E{\mathbb E}
\def\Prob{\mathbb P}
\def\R{\mathbb R}
\def\D{\mathbb D}
\def\d{{\rm d}}

\newcommand{\C}{\mathbb{C}} 
\newcommand{\Rd}{{\mathbb{R}^d}} 
\newcommand{\Z}{\mathbb{Z}} 
\newcommand{\N}{\mathbb{N}} 
\newcommand{\F}{\mathcal{F}}
\newcommand{\Filtration}{\mathbb{F}}
\newcommand{\tildeProb}{\bar{\Prob}}
\newcommand{\Etilde}{\bar{\E}}

\newcommand{\energy}{\mathcal{E}}

\newcommand{\LinearOperators}[1]{{\mathcal{L}(#1)}}

\def\im{\mathrm i}

\newcommand\B{\mathcal{B}}
\newcommand\Bn{\mathcal{B}_n}
\newcommand{\groupBn}{e^{-\im \Bn(l) }}
\newcommand{\groupB}{e^{-\im \B(l) }}
\newcommand{\groupBnT}{e^{-\im t \Bn(l) }}
\newcommand{\groupBnS}{e^{-\im s \Bn(l) }}

\def\v{\mathbf{v}}
\newcommand{\EA}{{E_A}}
\newcommand{\cadlagBall}{\D([0,T],\mathbb{B}_{\EA}^r)}
\newcommand{\cadlagBallX}{\D([0,T],\mathbb{B}_{X}^r)}
\newcommand{\sqrtA}{A^{\frac{1}{2}}}
\newcommand{\LinftyHeins}{{L^\infty(0,T;\EA)}}
\newcommand{\norm}[1]{\Vert #1 \Vert}
\newcommand{\bigNorm}[1]{\left\Vert #1 \right\Vert}
\newcommand{\cadlagHminuseins}{{\D([0,T],\EAdual)}}

\newcommand{\EAdual}{{E_A^*}}
\newcommand{\LInfty}{{L^\infty(M)}}

\newcommand{\duality}[2]{\langle #1, #2 \rangle}
\newcommand{\dualityBig}[2]{\Big\langle #1, #2 \Big\rangle}

\newcommand{\dualitybig}[2]{\big\langle #1, #2 \big\rangle}
\newcommand{\LalphaPlusEins}{{L^{\alpha+1}(M)}}
\newcommand{\LalphaPlusEinsAlphaPlusEins}{{L^{\alpha+1}(0,T;\LalphaPlusEins)}}
\newcommand{\Lzwei}{H}

\allowdisplaybreaks
\newcommand{\skpH}[2]{\big(#1,#2\big)_{H}}
\newcommand{\skp}[2]{\big(#1,#2\big)}
\newcommand{\skpHn}[2]{\big(#1,#2\big)_{H_n}}
\newcommand{\sumM }{\sum_{m=1}^{N}}
\newcommand{\skpLzwei}[2]{\big(#1,#2\big)_{L^2}}
\newcommand{\Real}{\operatorname{Re}}
\newcommand{\df }{\mathrm{d}}
\newcommand{\weaklyCadlag}[1]{\D_w\left([0,T],#1\right)}

\newcommand{\LalphaPlusEinsDual}{{L^{\frac{\alpha+1}{\alpha}}(M)}}
\newcommand{\Fhat}{\hat{F}}

\newcommand{\countingMeasures}{M^\nu_{\bar{\mathbb{N}}}([0,T]\times \R^M)}

\newcommand{\Id}{\operatorname{Id}}
\begin{document}
	\title[Stochastic NLS Driven by Pure Jump Noise]
	{Weak Martingale solutions for the stochastic nonlinear Schr\"odinger equation driven by pure jump noise}
	\author[Z.~Brze\'zniak]{Zdzis{\l}aw Brze\'zniak}
	\address{Department of Mathematics\\
		The University of York\\
		Heslington, York YO10 5DD, UK} \email{zdzislaw.brzezniak@york.ac.uk}
	\author[F.~Hornung]{Fabian Hornung}
	\address{Institute for Analysis\\ Karlsruhe Institute for Technology (KIT)\\ 76128 Karlsruhe, Germany}
	\email{fabian.hornung@kit.edu}
	\author[U.~Manna]{Utpal Manna}
	\address{School of Mathematics\\ Indian Institute of Science Education and Research Thiruvananthapuram\\ Trivandrum 695016, INDIA}
	\email{manna.utpal@iisertvm.ac.in}
	\keywords{Nonlinear Schr\"odinger equation, weak martingale solutions, Marcus canonical form, L\'{e}vy noise, Littlewood-Paley decomposition}
	\subjclass[2010]{60H15, 35R60}
\thanks{}
	\begin{abstract}
		We construct a martingale solution of the stochastic nonlinear Schr\"odinger equation with a multiplicative noise of jump type in the Marcus canonical form. The problem is formulated in a general framework that covers the subcritical focusing and defocusing stochastic NLS in $H^1$ on compact manifolds and on bounded domains with various boundary conditions.  The proof is based on a variant of the Faedo-Galerkin method. In the formulation of the approximated equations, finite dimensional operators derived from the Littlewood-Paley decomposition complement the classical orthogonal projections to guarantee uniform estimates. Further ingredients of the construction are tightness criteria in certain spaces of \cadlag functions and Jakubowski's generalization of the Skorohod-Theorem to nonmetric spaces.
\end{abstract}
\date{\today}
\maketitle
\date{\today}
\maketitle
\tableofcontents

\newtheorem{sta}{Statement}
\newtheorem{pro}{Proposition}
\newtheorem{thm}[pro]{Theorem}
\newtheorem{lem}[pro]{Lemma}
\newtheorem{cor}[pro]{Corollary}

\newtheorem{ass}[pro]{Assumption}

\newtheorem{defn}[pro]{Definition}

\newtheorem{rem}{Remark}\tableofcontents

\section{Introduction}
In this paper, we study the stochastic nonlinear Schr\"{o}dinger equation with pure jump noise in the Marcus form
\begin{equation}
\label{ProblemMarcus}
\left\{
\begin{aligned}
\d u(t)&= \left(-\im A u(t)-\im  F(u(t))\right) \df t-\im \sumM B_mu(t) \diamond \df L_m(t) \qquad t> 0,\\
u(0)&=u_0.
\end{aligned}\right.
\end{equation}
Here, $A$ is a selfadjoint nonnegative operator with a compact resolvent in an $L^2$-space $H$ and the initial value $u_0$ is chosen from the energy space $E_A:=\mathcal{D}(\sqrtA).$ Typical examples for this setting are
\begin{itemize}
	\item the negative Laplace-Beltrami operator $A=-\Delta_g$ on a compact riemannian manifold $(M,g)$ without boundary, $\EA=H^1(M),$
	\item the negative Laplacian $A=-\Delta$ on a bounded domain $M\subset \Rd$ with Neumann boundary condition, i.e. $\EA=H^1(M),$ or Dirichlet boundary conditions, i.e. $\EA=H^1_0(M)$
	\item and fractional powers of the first two examples.
\end{itemize}
Moreover, $F: \EA \to \EAdual$ is a nonlinear map generalizing the two most important examples, namely
\begin{itemize}
	\item
	the defocusing power nonlinearity $F_{\alpha}^+(u):=\vert u\vert^{\alpha-1}u$ with subcritical exponents in the sense that the embedding $\EA \hookrightarrow L^{\alpha+1}$ is compact
	\item
	and the focusing  nonlinearity $F_{\alpha}^-(u):=-\vert u\vert^{\alpha-1}u$ with an additional restriction to the power $\alpha.$
\end{itemize}
The stochastic noise term is given by selfadjoint
linear bounded operators $B_m$ for $m=1,\dots, N$ and an $\R^N-$ valued L\'evy process $L(t) := (L_1(t), \cdots, L_N(t))$ with pure jump defined as
\begin{equation}\label{smallJumpLevy}
	L(t) = \int_{0}^{t}\int_{B}\!l \,\tilde{\eta}(\df s,\df l) 
\end{equation}
 where $B := \left\{\vert l\vert\le 1\right\} \subset \R^N.$ Here, $\eta$ represents a time homogeneous Poisson random measure with $\sigma$-finite intensity measure $\nu$ such that 
	\begin{align*}
 \int_B \vert l\vert^2 \nu(\df l)<\infty.
 \end{align*}
 Moreover, $\tilde{\eta}:=\eta-\Leb\otimes\nu$ denotes the corresponding time homogeneous compensated Poisson random measure (see Appendix \ref{PRM} for details).   
Note that by the choice of $L$ in \eqref{smallJumpLevy}, we restrict ourselves to the case of small jumps. A generalization of the results of the present article to noise with jumps of arbitrary size will be investigated.
Using the abbreviation \begin{align*}
\B(l)=\sum_{m=1}^N l_m B_m, \qquad l\in \R^N,
\end{align*}
the equation \eqref{ProblemMarcus} including the Marcus product $\diamond$ is understood in the sense of the associated integral equation
\begin{align}\label{marcusIntroIntegralForm}
u(t)  & =  u_0 -\im \int_0^t\left( A u(s)+F(u(s))\right) \,\df s + \int_{0}^{t}\! \int_{B} \!\left[\groupB u(s-) - u(s-)\right]\, \tilde{\eta}(\df s,\df l)\nonumber\\
&\quad + \int_{0}^{t}\! \int_{B} \! \left\{\groupB u(s) - u(s) + \im \B(l) u(s)\right\}\, \nu(\df l)\df s. 
\end{align} 
Before we describe our approach and state our result in detail, we would like to give a general overview of the literature on the stochastic NLS. In the two previous decades, existence and uniqueness results for the stochastic NLS with Gaussian noise have been treated in many articles, most notably  \cite{BouardLzwei}, \cite{BouardHeins},\cite{BarbuL2},\cite{BarbuH1},\cite{FHornung} in the $\Rd$-setting, \cite{BrzezniakStrichartz} for general $2D$ compact manifolds and \cite{CheungMosincat} for the $d$-dimensional torus $\mathbb{T}^d.$ In these articles, the authors applied Strichartz estimates in a fixed point argument based on the mild formulation. Typically, this argument was either combined with a transformation to a random NLS without stochastic integral or with a truncation of the nonlinearities and suitable estimates of stochastic convolutions.

In their joint papers \cite{ExistencePaper} and \cite{UniquenessPaperWeis} together with Lutz Weis, the first and second named author developed a different approach to the stochastic NLS with Gaussian noise. By complementing the classical Faedo-Galerkin approximation with methods from spectral theory and particularly, a general version of the Littlewood-Paley decomposition, they were able to prove the existence of a martingale solution. In contrast to the argument based on Strichartz estimates, the construction only employs the Hamiltonian structure of the NLS and certain compact Sobolev embeddings. Therefore, the result could be formulated in a rather general setting including the stochastic NLS and the stochastic fractional NLS on compact manifolds and bounded domains. Subsequently, the authors concentrated on the special case of $2D$ manifolds with bounded geometry and $3D$ compact manifolds and proved pathwise uniqueness using appropriate Strichartz estimates from \cite{Burq} and \cite{Bernicot}. For a slight generalization of the existence result from \cite{ExistencePaper} allowing a certain class of non-conservative nonlinear noise, we refer to the PhD thesis \cite{FHornungPhD} of the second author.

In contrast to their Gaussian counterpart, stochastic nonlinear Schr\"odinger equations with jump noise as in \eqref{ProblemMarcus} are less well studied in the literature.
Models of this type have been proposed in \cite{Montero2010} and \cite{Montero2011} to incorporate  amplification of a signal in a fiber at random isolated locations caused by material inhomogeneities. In \cite{BouardHausenblasExistence}, de Bouard and Hausenblas considered a similar problem as \eqref{ProblemMarcus} on the full space $\Rd$ and obtained the existence of a martingale solution. The authors  continued their work and in the recent preprint \cite{BouardHausenblasUniqueness} with Ondrejat, and proved pathwise uniqueness in the $\Rd$-setting. 
The analysis of the noise in our present work is different compared to \cite{BouardHausenblasExistence, BouardHausenblasUniqueness} and is motivated by the requirement that the noise must preserve the invariance property under coordinate transformation. This issue is important for the norm-preserving condition, see \eqref{eqn-normPreservation} below. Thus, one needs to find an analogue of the Stratonovich integral in the case of stochastic integral with respect to compensated Poisson random measure.  The work of Marcus \cite{Marcus}, developed later by  Applebaum and  Kunita, see e.g. Section 6.10 of Applebaum \cite{Applebaum_2009} and Kunita \cite{Kunita}; see also Chechkin and Pavlyukevich \cite{ChPa}; provides a framework to resolve this technical issue. Surprisingly, the literature on stochastic partial differential equations driven by L\'evy noise in the ``Marcus" canonical form is very limited and such work has recently been initiated by the first and third named authors in \cite{Brz+Manna_2017_SLLGELevy} for the Landau-Lifshitz-Gilbert equation. The current paper is motivated by similar question and we believe that the theory developed in this work may help in understanding analysis of many other constrained PDEs (e.g. harmonic map flow, nematic liquid crystal model etc.) driven by jump noise or more general L\'evy noise. Also, there are some very recent works, see e.g. Chevyrev and Friz \cite{CF}, where rough differential equations are studied in the spirit of Marcus canonical stochastic differential equations by dropping the assumption of continuity prevalent in the rough path literature. Therefore, we hope that Gubinelli's \cite{Gubinelli} approach of Lyons' theory of integration over rough paths may be integrated with \cite{CF} and our approach to gain newer insight into the analysis of constrained SPDEs.

The goal of the present study is to construct a martingale solution of the stochastic NLS with pure jump noise in the Marcus canonical form. For that purpose, we transfer the argument developed in \cite{ExistencePaper} for the NLS with Gaussian noise to the present setting. Let us present our reasoning in detail. First, we introduce a strictly positive operator $S$ which commutes with $A$ and also has a compact resolvent. The operator $S$ is used to present a unified proof for each example and will chosen individually in the different concrete settings from Section 3. Typical choices are $S=A$ or $S=\Id+A$.
By means of the functional calculus of $S$ which is based on its series representation, we define operators $P_n=p_n(S)$ and $S_n=s_n(S)$ for $n\in\N_0.$  The functions $p_n$ and $s_n$, $n\in\N_0$ are illustrated in Figure \ref{figureSn}. For the precise definition, we refer to Section 5 and particularly the proof of Proposition \ref{PaleyLittlewoodLemma}. To summarize the most important properties of these operators, we remark that both $P_n$ and $S_n$ have a finite dimensional range, $P_n$ is an orthogonal projection and the operators $S_n$ satisfy the uniform estimate
$\sup_{n\in\N_0}\norm{S_n}_{\LinearOperators{L^{\alpha+1}}}<\infty$ since we assume that $S$ satisfies (generalized) Gaussian bounds.
Let us remark that a similar construction has  been employed in \cite{hornung2017strong} to construct a solution of a stochastic nonlinear Maxwell equation with Gaussian noise. This indicates that using operators like $S_n,$ $n\in\N_0$,  significantly increases the field of application of the classical Faedo-Galerkin method for both continuous and jump noise.

\begin{figure}[h]\label{figureSn}
	\begin{minipage}{0.49\textwidth}

\begin{tikzpicture}


\draw[->] (-0.2,0) -- (5,0) node[anchor=north] {$\lambda$};
\draw[->] (0,-0.2) -- (0,2) node[anchor=east] {$p_n(\lambda)$};


\draw	(-0.2,0) node[anchor=east] {$0$}
(-0.2,1.5) node[anchor=east] {$1$}
(0,-0.2) node[anchor=north] {$0$}
(2,-0.2) node[anchor=north] {$2^n$}
(4,-0.2) node[anchor=north] {$2^{n+1}$};

\draw (2,-0.1)--(2,0.1);
\draw (4,-0.1)--(4,0.1);


\draw [line width=1pt](0.014,1.5)--(4,1.5);
\draw [ dashed,line width=0.5pt](4,1.5)--(4,0.002);
\draw [line width=1pt](3.991,0)--(4.95,0);
\end{tikzpicture}
\end{minipage}
\begin{minipage}{0.49\textwidth}

\begin{tikzpicture}

\draw[->] (-0.2,0) -- (5,0) node[anchor=north] {$\lambda$};
\draw[->] (0,-0.2) -- (0,2) node[anchor=east] {$s_n(\lambda)$};


\draw	(-0.2,0) node[anchor=east] {$0$}
(-0.2,1.5) node[anchor=east] {$1$}
(0,-0.2) node[anchor=north] {$0$}
(2,-0.2) node[anchor=north] {$2^n$}
(4,-0.2) node[anchor=north] {$2^{n+1}$};
\draw (2,-0.1)--(2,0.1);
\draw (4,-0.1)--(4,0.1);


\draw [line width=1pt](0.014,1.5)--(2,1.5);
\draw [line width=1pt](3.991,0)--(4.99,0);
\draw[domain=2:4, variable=\x,line width=1pt] plot (\x,{1.5*(-6*(0.5*\x-1)^5+15*(0.5*\x-1)^4-10*(0.5*\x-1)^3+1)});

\end{tikzpicture}
\end{minipage}
\caption{Plot of the functions $p_n$ and $s_n$}\label{FigureSnPn}
\end{figure}
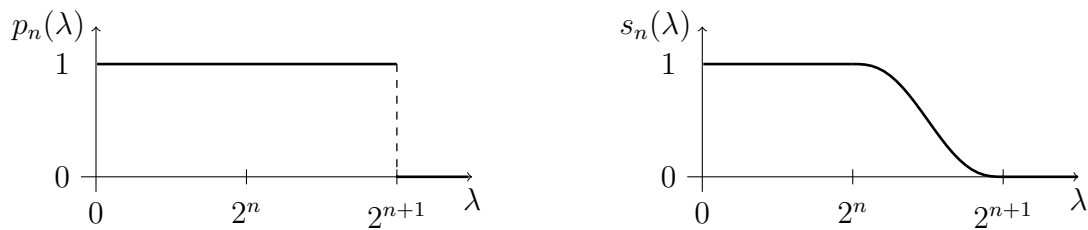 


Let us denote $\Bn(l)=\sum_{m=1}^N l_m S_n B_m S_n$ for  $n\in \N$ and $l\in \R^N$ and 
	\begin{align*}
\widetilde{u_{0,n}}:=
\begin{cases}
S_n u_0 \frac{\norm{u_0 }_H}{\norm{S_n u_0 }_H}, &  S_n u_0\neq 0,\\
0,& S_n u_0=0.		
\end{cases}
\end{align*}
for $n\in\N.$
Then, the finite dimensional approximation 
\begin{align}\label{marcusGalerkinIntro}
u_n(t)  & = P_n u_0 -\im \int_0^t\left( A u_n(s)+P_n F(u_n(s))\right) \,\df s
+ \int_{0}^{t}\! \int_{B} \!\left[\groupBn u_n(s-) - u_n(s-)\right]\, \tilde{\eta}(\df s,\df l)\nonumber\\
&\quad + \int_{0}^{t}\! \int_{B} \! \left\{ \groupBn u_n(s) - u_n(s) + \im \Bn(l) u_n(s)\right\}\, \nu(\df l)\df s
\end{align}
of problem \eqref{ProblemMarcus} has a unique solution. Due to the properties of $P_n$ and $S_n$
and the Hamiltonian structure of the nonlinear Schrödinger equation combined with the Marcus structure of the noise, we are able to prove the mass identity
\begin{align*}
	\norm{u_n(t)}_{L^2}=\norm{P_n u_0}_{L^2}
\end{align*} 
almost surely for all $t\in[0,T]$
and the uniform estimate 
\begin{align}\label{energyEstimateIntroduction}
	\sup_{n\in\N}\E \Big[\sup_{t\in[0,T]} \norm{u_n(t)}_\EA^{r}\Big]<\infty
\end{align}
for all $r\in [1,\infty).$ Using several compactness Lemmata for spaces of \cadlag functions inspired by \cite{Motyl} and  \cite{Brz+Manna_2017_SLLGELevy}, \eqref{energyEstimateIntroduction} leads to tightness of the sequence $\left(u_n\right)_{n\in\N}$ in
\begin{align*}
	Z_T&:=\cadlagHminuseins\cap \LalphaPlusEinsAlphaPlusEins \cap \weaklyCadlag{\EA}.
\end{align*}
For the precise definition of $Z_T,$ we refer to Section \ref{CompactnessSection}. Subsequently, a limit argument based on the Skorohod-Jakubowski Theorem shows the existence of a martingale solution. Altogether, we prove the following result.


\begin{theorem}\label{mainTheorem}
	Choose the operator $A$ and the energy space $\EA$ according to Assumption \ref{spaceAssumptions}, the nonlinearity $F$ according to Assumptions \ref{nonlinearAssumptions} and \ref{focusing} and the noise according to Assumption \ref{noiseAssumptions}.
	Then, for any $u_0\in \Lzwei,$ the problem \eqref{ProblemMarcus}
	has a martingale solution $\left(\tilde{\Omega},\tilde{\F},\tilde{\Prob},\tilde{\eta},\tilde{\Filtration},u\right)$ which satisfies
	\begin{align*}
	u\in L^q(\tilde{\Omega},L^\infty(0,T;\EA))
	\end{align*}
	for all $q\in [1,\infty).$ Moreover,
	the  equality 
	\begin{align}\label{eqn-normPreservation}
	\norm{ u(t)}_\Lzwei  & =  \norm{ u_0}_\Lzwei
	\end{align}
	holds $\bar{\Prob}$-almost surely for all $t\in [0,T].$
\end{theorem}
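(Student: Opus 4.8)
The plan is to carry out a Faedo--Galerkin construction along the lines sketched in the introduction, so the proof splits into four stages: (i) solvability and a priori estimates for the finite-dimensional approximation \eqref{marcusGalerkinIntro}; (ii) tightness of the laws of $(u_n)_{n\in\N}$ on $Z_T$; (iii) an application of the Skorohod--Jakubowski theorem on nonmetric spaces; and (iv) identification of the limit as a martingale solution together with the verification of \eqref{eqn-normPreservation}. For fixed $n$, equation \eqref{marcusGalerkinIntro} is a stochastic evolution equation on the finite-dimensional space $P_nH$ whose coefficients are locally Lipschitz, since $F$ restricted to a finite-dimensional subspace is smooth and $l\mapsto \groupBn$ is real-analytic with values in the group of $L^2$-isometries of $P_nH$ (as $S_nB_mS_n$ is selfadjoint). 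A standard localisation combined with the mass identity yields a unique global \cadlag solution $u_n$. The mass identity $\norm{u_n(t)}_{L^2}=\norm{P_nu_0}_{L^2}$ follows by applying the It\^o formula to $t\mapsto\norm{u_n(t)}_{L^2}^2$: the drift contributes $2\Real\dualitybig{-\im(Au_n+P_nF(u_n))}{u_n}=0$ by selfadjointness of $A$ and the gauge invariance of $F$, while in the Marcus canonical form a jump at time $s$ replaces $u_n(s-)$ by $\groupBn u_n(s-)$ with $l=\Delta L(s)$, which preserves the $L^2$-norm, and the compensator together with the correction term $\im\Bn(l)u_n$ cancels the second-order It\^o term.

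The uniform bound \eqref{energyEstimateIntroduction} is obtained by applying the It\^o formula to the energy $\mathcal H(u)=\tfrac12\norm{\sqrtA u}_H^2+\Fhat(u)$, where $\Fhat$ is the potential of $F$. The Marcus structure is tailored so that the noise produces no net growth of $\mathcal H$: the bracket integral, the $\nu$-compensator and the correction term combine to leave a genuine mean-zero martingale plus remainders controlled by $\mathcal H(u_n)$, using $\sup_{n}\norm{S_n}_{\LinearOperators{L^{\alpha+1}}}<\infty$ and the structural assumptions on the $B_m$. In the focusing case, Assumption \ref{focusing}, a Gagliardo--Nirenberg inequality together with the already established mass identity shows that $\mathcal H(u)$ still controls $\norm{u}_\EA^2$ up to a constant depending only on $\norm{u_0}_H$. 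A Gronwall argument and the Burkholder--Davis--Gundy inequality then give \eqref{energyEstimateIntroduction} for every $r\in[1,\infty)$.

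Granted \eqref{energyEstimateIntroduction}, the compactness lemmata of Section \ref{CompactnessSection}, combined with the compact embeddings $\EA\imbed H\imbed\EAdual$ and $\EA\imbed\LalphaPlusEins$, yield tightness of $(u_n)_{n\in\N}$ in $Z_T$, while the laws of the driving noise are fixed. Jakubowski's theorem then provides a new stochastic basis carrying $\tilde u_n$, $\tilde\eta_n$, $\tilde u$ and $\tilde\eta$ with $(\tilde u_n,\tilde\eta_n)\stackrel{d}{=}(u_n,\eta)$ and $\tilde u_n\to\tilde u$ almost surely in $Z_T$. Passing to the limit in \eqref{marcusGalerkinIntro} identifies $\tilde u$ as a martingale solution: the linear term converges since $Au_n\rightharpoonup Au$ in $\EAdual$, the nonlinear term since $P_nF(u_n)\to F(u)$ via strong convergence in $\LalphaPlusEinsAlphaPlusEins$ and continuity of $F$, and the stochastic and compensator terms since $S_n\to\Id$ strongly, $l\mapsto\groupB$ is smooth and bounded on $B$, and \eqref{energyEstimateIntroduction} supplies the uniform integrability for a Vitali-type passage to the limit in the integrals against the Poisson random measure. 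The norm preservation \eqref{eqn-normPreservation} then follows either by passing to the limit in the mass identity, using $P_nu_0\to u_0$ in $H$ and the constancy of $\norm{u_n(t)}_H$, or, more robustly, by re-deriving it directly for $\tilde u$ via the It\^o formula as in stage (i).

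I expect the principal obstacle to lie in stage (iv): one must simultaneously handle the non-metrizable weak-topology component $\weaklyCadlag{\EA}$ of $Z_T$, the convergence of the Poisson random measures $\tilde\eta_n$ on the new probability space, and the fact that the integrands $\groupBn\tilde u_n(s-)-\tilde u_n(s-)$ converge only weakly in $\EA$ although strongly in $H$; controlling the stochastic and Marcus-correction integrals under these constraints relies crucially on the uniform energy estimate \eqref{energyEstimateIntroduction} together with the smoothness and uniform boundedness of $l\mapsto\groupB$ on the small-jump set $B$.
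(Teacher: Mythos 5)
Your proposal follows essentially the same route as the paper: unique global solvability of the Galerkin system (the paper invokes a result of Albeverio--Brze\'zniak--Wu after checking local Lipschitz and one-sided growth conditions), mass conservation and energy estimates via the It\^o formula for the Marcus equation with a Gronwall argument and interpolation in the focusing case, tightness in $Z_T$ via the Aldous condition, the Skorohod--Jakubowski theorem, a Vitali-type identification of the limit, and norm preservation by passing to the limit in the mass identity using the constancy of $\norm{u_n(t)}_H$ (the paper upgrades the $\EAdual$-Skorohod convergence to $\D([0,T],H)$ by interpolating with the uniform $\EA$-bound, and works with rescaled initial data $\widetilde{u_{0,n}}$ rather than $P_nu_0$ so that the conserved constant is exactly $\norm{u_0}_H$). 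This matches the paper's proof in structure and in all key lemmas.
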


The article is organized as follows. In the second section, we fix the setting by stating the general assumptions on the operator $A,$ the nonlinearity $F$ and  the noise term. These assumptions are illustrated in the third section by concrete examples. The proof of the main Theorem \ref{mainTheorem} is contained in the sections 4, 5 and 6 that deal with compactness results, the uniform estimates for the Galerkin approximation and the limit procedure. In the appendix, we collect basic material on Poisson random measures and Marcus noise.  
\section{General Framework and Assumptions}

	In this section, we formulate the abstract framework for the stochastic nonlinear Schr\"{o}dinger equation we refer to in Theorem \ref{mainTheorem}.

	Let $(\tilde{M},\Sigma,\mu)$ be a $\sigma$-finite measure space with metric $\rho$ satisfying the \emph{doubling property}, i.e. $\mu(B(x,r))<\infty$ for all $x\in \tilde{M}$ and $r>0$ and
	\begin{align}\label{doubling}
	\mu(B(x,2r))\lesssim \mu(B(x,r)).
	\end{align}	
	Let $M\subset \tilde{M}$ be an open subset with finite measure and  $L^q(M)$ for $q\in [1,\infty]$ the space of equivalence classes of $\C$-valued $q-$integrable functions.
	We further abbreviate ${H}:=L^2(M)$ and equip $H$ with the standard complex $L^2$-inner product.
	
	Let $A$ be a non-negative self-adjoint operator on ${H}$ with domain $\mathcal{D}(A).$
	We set  $\EA:=\mathcal{D}((\Id+A)^\frac{1}{2})$ and call it \emph{energy space}. Equipped  with the inner product
	\begin{align*}
	\skp{x}{y}_\EA=\skpLzwei{(\Id+A)^\frac{1}{2} x}{(\Id+A)^\frac{1}{2} y},
	\end{align*}
	$\EA$ is a complex Hilbert space.
	Moreover,
	we define the extrapolation space $H_{-\frac{1}{2}}$ as the completion of $\Lzwei$ with respect to the norm
	\begin{align*}
	\norm{x}_{-\frac{1}{2}}:=\norm{(\Id+A)^{-\frac{1}{2}}x}_{L^2},\qquad x\in \Lzwei,
	\end{align*}
	and obtain a Hilbert space with the inner product
	\begin{align*}
	\skp{x}{y}_{-\frac{1}{2}}&=\lim_{n,m\to\infty} \skpLzwei{(\Id+A)^{-\frac{1}{2}} x_n}{(\Id+A)^{-\frac{1}{2}}y_m},\qquad x,y\in H_{-\frac{1}{2}},
	\end{align*}
	for sequences $\left(x_n\right)_{n\in\N},\left(y_m\right)_{m\in\N}\subset \Lzwei$ with $x_n\to x$ and $y_m\to y$ in $H_{-\frac{1}{2}}$ as $n,m\to \infty.$
	Note that we can identify $H_{-\frac{1}{2}}$ with $\EAdual$ and the duality is given by
	\begin{align*}
	\duality{x}{y}_{\frac{1}{2},-\frac{1}{2}}:=\lim_{n\to\infty}\skpLzwei{x}{y_n},\qquad x\in \EA, \quad y\in H_{-\frac{1}{2}},
	\end{align*}
	with $\left(y_n\right)_{n\in\N}\subset \Lzwei$ such that $y_n\to y$ in $H_{-\frac{1}{2}}$ as $n\to\infty.$ Often, we shortly write $\duality{\cdot}{\cdot}$ for $\duality{\cdot}{\cdot}_{\frac{1}{2},-\frac{1}{2}}$ and write $\EAdual$ instead of $H_{-\frac{1}{2}}.$
		 Note that $\left(E_A, H, E_A^*\right)$ is a Gelfand triple, i.e.
		 \begin{align*}
		 E_A\hookrightarrow H \cong H^* \hookrightarrow E_A^*.
		 \end{align*}
	We point out that one can also treat $H,$ $\EA$ and $H_{-\frac{1}{2}}$ as real Hilbert spaces with scalar products $\Real\skpH{\cdot}{\cdot},$ $\Real\skp{\cdot}{\cdot}_\EA$ and $\Real\skp{\cdot}{\cdot}_{-\frac{1}{2}},$ respectively. Then, $\EA$ and $H_{-\frac{1}{2}}$ are dual in the sense that each real-valued continuous linear functional $f$ on $\EA$ has the representation $f=\Real \duality{\cdot}{y_f}_{\frac{1}{2},-\frac{1}{2}}$ for some $y_f\in H_{-\frac{1}{2}}.$\\

We continue with the main Assumption on the functional analytic setting for the stochastic NLS.

	\begin{assumptionNotation}\label{spaceAssumptions}
		We assume the following:
		\begin{itemize}
			\item [i)] There is a strictly positive self-adjoint operator $S$ on ${H}$ with compact resolvent commuting with $A$ and $\mathcal{D}(S^k)\hookrightarrow \EA$ for some $k\in \N.$ Moreover, we assume that there exists $p_0\in [1,2),$ such that $S$  has \emph{generalized Gaussian $(p_0,p_0^\prime)$-bounds}, i.e.
			\begin{align}\label{generalizedGaussianEstimate}
			\Vert \mathbf{1}_{B(x,t^\frac{1}{m})}e^{-tS}\mathbf{1}_{B(y,t^\frac{1}{m})}\Vert_{\mathcal{L}(L^{p_0},L^{p_0^\prime})} \le C{\mu(B(x,t^\frac{1}{m}))}^{\frac{1}{p_0^\prime}-\frac{1}{p_0}} \exp \left\{-c \left(\frac{\rho(x,y)^m}{t}\right)^{\frac{1}{m-1}}\right\},
			\end{align}
			for all $t>0$ and  $(x,y)\in M\times M$ with constants $c,C>0$ and $m\ge 2.$
						\item[ii)] Let $\alpha \in (1,p_0^\prime-1)$ be such that $\EA$ is compactly embedded in $\LalphaPlusEins.$ We set
						\begin{align*}
						p_{\max}:= \sup \left\{p\in (1,\infty]: \EA \hookrightarrow L^p(M) \quad\text{is continuous}\right\}\dela{\in [\alpha+1,\infty].}
						\end{align*}
						and note that $p_{\max}\in [\alpha+1,\infty].$
						In the case $p_{\max}<\infty,$ we assume that $\EA \hookrightarrow L^{p_{\max}}(M)$ is continuous, but not necessarily compact.
		\end{itemize}
	\end{assumptionNotation}
	
	\begin{remark}\label{GaussianRemark}
		\begin{itemize}
			\item[a)] If $p_0=1,$ then it is proved in \cite{blunckKunstmann} that \eqref{generalizedGaussianEstimate} is equivalent to the usual upper Gaussian estimate, i.e. for all $t>0$ there is a measurable function $p(t,\cdot,\cdot): M\times M\to \R$ with
			\begin{align*}
			(e^{-tS}f)(x)= \int_M p(t,x,y) f(y) \mu(dy), \quad t> 0, \quad \text{a.e. } x\in M
			\end{align*}
			for all $f\in H$ and
			\begin{align}\label{GaussianEstimate}
			\vert p(t,x,y)\vert \le \frac{C}{\mu(B(x,t^\frac{1}{m}))} \exp \left\{-c \left(\frac{\rho(x,y)^m}{t}\right)^{\frac{1}{m-1}}\right\},
			\end{align}
			for all $t>0$ and almost all $(x,y)\in M\times M$ with constants $c,C>0$ and $m\ge 2.$	In particular, $e^{-tS}$ can be extended to a $C_0$-semigroup on $L^p(M)$ for all $p\in [1,\infty).$
			\item[b)] In fact, in all our examples in the third section, the upper Gaussian estimate \eqref{GaussianEstimate} holds and therefore, the previous assumption is fulfilled with $p_0=1.$
		\end{itemize}
	\end{remark}
	The following Lemma contains some straightforward consequences of \ref{spaceAssumptions}.
	\begin{lemma}\label{spaceLemma}
		\begin{itemize}
			\item[a)] 		There is a positive self-adjoint operator $\hat{A}$ on $E_A^*$ with $\mathcal{D}(\hat{A})=E_A$ such that the restriction of  $\hat{A}$ to $D(A)$ is equal to $A$. \dela{on $H.$} 	For simplicity of notation, we will \del{also} denote the operator  $\hat{A}$ by $A.$
			\item[b)]  The embedding
			$\EA \hookrightarrow {H}$
			is compact.
			\item[c)] There is an orthonormal basis $\left(h_n\right)_{n\in \N}$ and a nondecreasing sequence $\left(\lambda_n\right)_{n\in\N}$ with $\lambda_n>0$ and  $\lambda_n\to \infty$ as $n\to \infty$ and
			\begin{align*}
			S x=\sum_{n=1}^\infty \lambda_n \skpH{x}{h_n} h_n, \quad x\in \mathcal{D}(S)=\left\{x\in H: \sum_{n=1}^\infty \lambda_n^2 \vert \skpH{x}{h_n}\vert^2<\infty\right\}.
			\end{align*}
		\end{itemize}
	\end{lemma}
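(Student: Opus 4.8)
The three assertions follow quickly from the assumptions together with elementary spectral theory; (b) and (c) are essentially immediate and only (a) requires a little care with the extrapolation scale. For \textbf{(c)}, since $S$ is strictly positive, self-adjoint and has compact resolvent, $(\Id+S)^{-1}$ is a compact, injective, positive self-adjoint operator on $H$. I would apply the spectral theorem for compact self-adjoint operators to obtain an orthonormal basis $(h_n)_{n\in\N}$ of $H$ with $(\Id+S)^{-1}h_n=\nu_n h_n$, $\nu_n\in(0,1]$, $\nu_n\to 0$, and set $\lambda_n:=\nu_n^{-1}-1$. Strict positivity of $S$ gives $0\notin\sigma(S)$, hence $\lambda_n>0$; relabelling the basis we may take $(\lambda_n)$ nondecreasing, and $\nu_n\to 0$ forces $\lambda_n\to\infty$. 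The formula for $Sx$ and the description of $\mathcal D(S)$ are then just the spectral representation of $S$ in this basis. For \textbf{(b)}, Assumption \ref{spaceAssumptions}(ii) gives that $\EA\hookrightarrow\LalphaPlusEins$ is compact; since $\alpha>1$ we have $\alpha+1>2$, so on the finite measure space $M$ Hölder's inequality yields the continuous embedding $\LalphaPlusEins\hookrightarrow L^2(M)=\Lzwei$ with $\norm{f}_{L^2}\le\mu(M)^{\frac12-\frac1{\alpha+1}}\norm{f}_{L^{\alpha+1}}$, and composing the compact embedding with this bounded one gives the compact embedding $\EA\hookrightarrow\Lzwei$.

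For \textbf{(a)}, put $\Lambda:=(\Id+A)^{1/2}$, a strictly positive self-adjoint operator on $H$ with $\mathcal D(\Lambda)=\EA$. By the definition of the norms, $\Lambda\colon\EA\to H$ is an isometric isomorphism, and $\Lambda^{-1}=(\Id+A)^{-1/2}$, being isometric from $(H,\norm{\cdot}_{-\frac12})$ onto a dense subspace of $(H,\norm{\cdot}_{L^2})$, extends to an isometric isomorphism $\Lambda^{-1}\colon\EAdual\to H$; I denote its inverse by $\Lambda\colon H\to\EAdual$. I then define $\hat A$ on $\EAdual$ with domain $\EA$ by $\hat A:=\Lambda^2-\iota$, where $\Lambda^2\colon\EA\xrightarrow{\Lambda}H\xrightarrow{\Lambda}\EAdual$ and $\iota\colon\EA\hookrightarrow\EAdual$ is the embedding. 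Then $\Id+\hat A=\Lambda^2\colon\EA\to\EAdual$ is an isometric isomorphism, and for $u,v\in\EA$ one computes $\skp{\Lambda^2 u}{v}_{\EAdual}=\skpLzwei{\Lambda^{-1}\Lambda^2 u}{\Lambda^{-1}v}=\skpLzwei{\Lambda u}{\Lambda^{-1}v}=\skpLzwei{u}{v}$; hence $\Id+\hat A$ is symmetric, and being also surjective it is self-adjoint on $\EAdual$, with $\skp{(\Id+\hat A)u}{u}_{\EAdual}=\norm{u}_{L^2}^2\ge\norm{u}_{\EAdual}^2$, so $\hat A\ge 0$. Finally, unwinding the identifications, for $u\in\mathcal D(A)$ one has $\Lambda^2 u=(\Id+A)u\in H\subset\EAdual$ and $\iota u=u$, whence $\hat A u=Au$; thus $\hat A$ is a nonnegative self-adjoint operator on $\EAdual$ with $\mathcal D(\hat A)=\EA$ extending $A$, and we rename it $A$.

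The only genuinely delicate point is Part (a): one must keep straight the several isometric identifications of $\EA$, $H$ and $\EAdual$ by powers of $\Lambda$ and, in particular, establish self-adjointness of $\hat A$ as an operator on the Hilbert space $\EAdual$ (not merely boundedness of $\hat A\colon\EA\to\EAdual$) and check that it genuinely restricts to $A$ on $\mathcal D(A)$. Parts (b) and (c) present no obstacle beyond invoking the relevant assumption and the spectral theorem.
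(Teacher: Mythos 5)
Your proof is correct. The paper itself does not spell out an argument for this lemma (it is labelled a collection of straightforward consequences of Assumption \ref{spaceAssumptions}, with the details going back to \cite{ExistencePaper}), and what you supply is exactly the standard reasoning one would expect: part (c) is the spectral theorem for the compact, injective, positive operator $(\Id+S)^{-1}$, part (b) is the compact embedding $\EA\hookrightarrow\LalphaPlusEins$ composed with the H\"older embedding $\LalphaPlusEins\hookrightarrow\Lzwei$ available because $\mu(M)<\infty$ and $\alpha+1>2$, and part (a) is the usual Gelfand-triple realization of $A$ on $\EAdual$ via the isometric isomorphisms induced by $(\Id+A)^{\pm 1/2}$. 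Your treatment of (a) in particular is careful on the two points that actually matter: self-adjointness of $\Id+\hat A$ on $\EAdual$ (symmetry from $\skp{\Lambda^2u}{v}_{\EAdual}=\skpLzwei{u}{v}$ together with surjectivity of $\Lambda^2:\EA\to\EAdual$, plus density of $\EA$ in $\EAdual$) and the verification that $\hat A u=Au$ for $u\in\mathcal{D}(A)$, so there is nothing to add.
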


	\begin{assumption}\label{nonlinearAssumptions}
		Let $\alpha\in(1,p_0^\prime-1)$ be chosen as in Assumption $\ref{spaceAssumptions}.$ Then, we assume the following:
		\begin{itemize}
			\item[i)] Let $F: \LalphaPlusEins \to \LalphaPlusEinsDual$ be a function satisfying the following estimate
			\begin{align}\label{nonlinearityEstimate}
			\norm{F(u)}_\LalphaPlusEinsDual \le C_{F,1} \norm{u}_\LalphaPlusEins^\alpha,\quad u\in \LalphaPlusEins.
			\end{align}
			Note that this leads to $F: \EA \to \EAdual$ by Assumption $\ref{spaceAssumptions},$ because $\EA\hookrightarrow\LalphaPlusEins$ implies $(\LalphaPlusEins)^*=\LalphaPlusEinsDual\hookrightarrow \EAdual.$ We further assume $F(0)=0$ and
			\begin{align}\label{nonlinearityComplex}
			\Real \duality{\im u}{F(u)}=0, \quad u\in \LalphaPlusEins.
			\end{align}
			\item[ii)] The map $F: \LalphaPlusEins\to \LalphaPlusEinsDual$ is continuously real Fr\'{e}chet differentiable with
			\begin{align}\label{deriveNonlinearBound}
			\Vert F'[u]\Vert_{L^{\alpha+1}\to L^\frac{\alpha+1}{\alpha}} \le C_{F,2} \norm{u}_\LalphaPlusEins^{\alpha-1}, \quad u\in \LalphaPlusEins.
			\end{align}
			\item[iii)] The map $F$ has a real antiderivative $\hat{F},$ i.e. there exists a Fr\'{e}chet-differentiable map  $\hat{F}: \LalphaPlusEins\to \R$ with
			\begin{align}\label{antiderivative}
			\Fhat'[u]h=\Real \duality{F(u)}{h},\quad u,h\in \LalphaPlusEins.
			\end{align}
		\end{itemize}
	\end{assumption}
	By Assumption $\ref{nonlinearAssumptions}$ ii) and the mean value theorem, we get
	\begin{align}\label{nonlinearityLocallyLipschitz}
	\norm{  F(x)-F(y)}_{\LalphaPlusEinsDual}&\le\sup_{t\in [0,1]}\norm{F'[tx+(1-t)y]}	\norm{  x-y}_{\LalphaPlusEins}\nonumber\\
	&\le C_{F,2} \left(\norm{x}_\LalphaPlusEins+\norm{y}_\LalphaPlusEins\right)^{\alpha-1} \norm{x-y}_\LalphaPlusEins	
	\end{align}	
	for $x,y\in \LalphaPlusEins$ which means that the nonlinearity is  Lipschitz on bounded sets of $\LalphaPlusEins.$
We will cover the following two standard types of nonlinearities.
	\begin{definition}
		Let $F$ satisfy Assumption $\ref{nonlinearAssumptions}.$
		Then, $F$ is called \begin{trivlist}
\item[\pb]  \emph{defocusing},
 if $\Fhat(u)\ge 0$ for all $u\in \LalphaPlusEins$
 \item[\pb]  and
 \item[\pb] \emph{focusing}, if $\Fhat(u)\le 0$ for all $u\in \LalphaPlusEins.$	
 \end{trivlist}
	\end{definition}
	

	\begin{Assumption}\label{focusing}
		We assume that either condition i) or condition i') holds, where
		\begin{itemize}
			\item[i)] The function $F$ is defocusing and satisfies
			\begin{align}\label{boundantiderivative}
			\frac{1}{C_{F,3}}\norm{u}_\LalphaPlusEins^{\alpha+1}\le  \Fhat(u)\le C_{F,3} \norm{u}_\LalphaPlusEins^{\alpha+1}, \quad u\in \LalphaPlusEins.
			\end{align}
			\item[i')] The function  $F$ is focusing and satisfies
			\begin{align}\label{boundantiderivativeFocusing1}
			-\Fhat(u)\le C_{F,4}\norm{u}_\LalphaPlusEins^{\alpha+1}, \quad u\in \LalphaPlusEins,
			\end{align}
			and there exists  $\theta \in (0,\frac{2}{\alpha+1})$ such that \footnote{In below, the symbol $\left(\cdot, \cdot\right)_{\theta,1}$ stands for the real interpolation functor with parameters $1$ and $\infty$, see for instance \cite{Triebel}.}
			\begin{align}\label{interpolationFocusing}
			\left({H},\EA\right)_{\theta,1}\hookrightarrow \LalphaPlusEins.
			\end{align}
		\end{itemize}
	\end{Assumption}
	The model nonlinearities are the defocusing power nonlinearity $F_{\alpha}^+(u):=\vert u\vert^{\alpha-1}u$ with subcritical exponents in the sense that the embedding $\EA \hookrightarrow L^{\alpha+1}$ is compact and the focusing  nonlinearity $F_{\alpha}^-(u):=-\vert u\vert^{\alpha-1}u$ with an additional restriction to the power $\alpha.$
	\begin{assumption}\label{noiseAssumptions}
		\begin{itemize}
			\item[(a)]  Assume that $\big(\Omega, \mathcal{F}, \mathbb{F}, \mathbb{P}\big)$ is a filtered probability space, where $\mathbb{F}=\big(\mathcal{F}_t)_{t\geq 0}$ is the filtration, and this probability space satisfies the so called usual conditions, i.e.
			\begin{trivlist}
				\item[(i)] $\mathbb{P}$ is complete on $(\Omega, \mathcal{F})$,
				\item[(ii)] for each $t\geq 0$, $\mathcal{F}_t$ contains all $(\mathcal{F},\mathbb{P})$-null sets,
				\item[(iii)] the filtration $\mathbb{F}$ is right-continuous.
			\end{trivlist}
			\item[(b)] Assume that $(L(t))_{t\geq 0}$ is an $\R^N$-valued, $(\mathcal{F}_t)$-adapted L\'evy process of  pure jump type defined on the above probability space with drift $0$ and the corresponding time homogenous Poisson random measure $\eta$.
			\item[(c)]    Assume that the intensity measure $\Leb \otimes\nu$ is such that $\supp \nu \subset B$, where $B$ is the closed unit ball in
			$\R^N$.
\item[d)] Let $B_1,\dots, B_M\in \LinearOperators{H}$ be self-adjoint operators on $H$  with $B_m|_\EA\in {\LinearOperators{\EA}}$ and $B_m|_\LalphaPlusEins \in {\LinearOperators{\LalphaPlusEins}}.$
		\end{itemize}
	\end{assumption}
	We abbreviate
\begin{align}
b_\EA:=\sumM \|B_m\|_\LinearOperators{\EA}^2,\qquad b_{L^{\alpha+1}}:=\sumM \|B_m\|_\LinearOperators{L^{\alpha+1}}^2,\qquad b_\Lzwei:=\sumM \|B_m\|_\LinearOperators{\Lzwei}^2
\end{align}
and for $l\in \R^N,$ we introduce the notation
\begin{align*}
\B(l)=:\sumM l_m B_m.
\end{align*}
\begin{remark}\label{LevyKhintchine}
	Note that  by the L\'evy-Khinchine formula, see \cite{PeszatZabczyk}, Theorem 4.23, the previous assumption yields that the intensity measure $\nu$ is a L\'evy-measure on $\R^N,$ i.e.
	\begin{align}\label{LevyMeasure}
	\int_B \vert l\vert^2 \nu(\df l)<\infty.
	\end{align}
Moreover, we have the representation
\begin{equation*}
L(t) = \int_{0}^{t}\int_{B}\!l \,\tilde{\eta}(\df s,\df l). 
\end{equation*}		
\end{remark}

	\subsection{The Marcus Mapping}\label{subsec-Marcus}
	

	Let us define a generalized Marcus mapping
	\begin{align*}
		\Phi: \mathbb{R}_{+}  \times \R^N \times H \rightarrow H,\qquad \Phi(t,l,x):=e^{-\im t\B(l)}x,
	\end{align*}
	i.e. for each fixed $l \in \R^N$, $x \in H$, the function $t \mapsto \Phi(t,l,x)$ is the continuously differentiable solution of
	\begin{equation}\label{E_Phieqn}
	\dfrac{du}{dt}(t) = -\im\sum_{m=1}^{N} l_m B_mu(t),\qquad t\geq 0,
	\end{equation} with $u(0)=x \in H$, and $l=(l_1, l_2, \ldots, l_N)\in \R^N$.
	Equation \eqref{ProblemMarcus} with notation $\diamond$ is defined in the integral form as following
	\begin{align}\label{marcus2}
	u(t)  & =  u_0 -\im \int_0^t\left( A u(s)+F(u(s))\right) \,\df s + \int_{0}^{t}\! \int_{B} \!\left[\groupB u(s-) - u(s-)\right]\, \tilde{\eta}(\df s,\df l)\nonumber\\
	&\quad + \int_{0}^{t}\! \int_{B} \! \left\{\groupB u(s) - u(s) + \im \sumM l_mB_mu(s)\right\}\, \nu(\df l)\df s,
	\end{align}
	where $\tilde{\eta}:=\eta-\Leb \otimes\nu$ denotes the compensated Poisson random measure induced by $\eta.$ In the next definition, we define the notion of a solution used in the present article.


	\begin{definition}\label{MartingaleSolutionDef}
		Let $T>0$ and $u_0\in E_A.$ A \emph{martingale solution} of the equation $\eqref{ProblemMarcus}$ is a system $\left(\bar{\Omega},\bar{\F},\bar{\Prob},\bar{\eta},\bar{\Filtration},u\right)$ with
		\begin{itemize}
			\item a probability space $\left(\bar{\Omega},\bar{\F},\bar{\Prob}\right);$
			\item a time homogeneous Poisson random measure $\bar{\eta}$  on $\R^N$ over $\bar{\Omega}$ with intensity measure $\nu,$
			\item  a filtration $\bar{\Filtration}=\left(\bar{\F}_t\right)_{t\in [0,T]}$ with the usual conditions;
			\item an  $\bar{\Filtration}$-adapted, $\EAdual$-valued c\`adl\`ag process  such that $u\in L^2(\Omega\times [0,T],\EAdual)$
			and almost all paths are in $\D_w([0,T],\EA)$,
		\end{itemize}
		such that 
the equation \eqref{marcus2}
		holds $\bar{\Prob}$-almost surely in $\EAdual$ for all $t\in [0,T]$
		with $\tilde{\bar{\eta}}$ instead of $\tilde{\eta}.$
	\end{definition}
	
\section{Examples}

In this section, we collect concrete settings which are covered by the general framework of Assumptions \ref{spaceAssumptions}, \ref{nonlinearAssumptions} and \ref{focusing}. We skip the proofs since they already appeared in \cite{ExistencePaper}, where the NLS with Gaussian noise was considered in the same framework.

\begin{corollary}\label{CorollaryNLS}
	Suppose that a) or b) or c) is true.
	\begin{itemize}
		\item[a)] Let $M$ be a d-dimensional compact manifold, $A=-\Delta_g,$ $E_A=H^1(M).$
		\item[b)] Let $M\subset \Rd$ be a bounded domain and $A=-\Delta_D$ be the Dirichlet-Laplacian, $E_A=H^1_0(M).$
		\item[c)] Let $M\subset \Rd$ be a bounded Lipschitz domain and $A=-\Delta_N$ be the Neumann-Laplacian, $E_A=H^1(M).$
	\end{itemize}	
	Choose the nonlinearity from $i)$ or $ii).$
	\begin{itemize}
		\item[i)] $F(u)= \vert u\vert^{\alpha-1}u$ with $ \alpha \in \left(1,1+\frac{4}{(d-2)_+}\right)$, i.e. $F$ is defocusing, 
		\item[ii)]$F(u)= -\vert u\vert^{\alpha-1}u$ with $ \alpha \in \left(1,1+\frac{4}{d}\right),$ i.e. $F$ is focusing, 
	\end{itemize}
	Set $B_m x=e_m x$ for $x\in H$ and $m=1,\dots,M$ with real-valued functions
		\begin{align}\label{assumptionEm}
		e_m \in F:=\begin{cases}
		H^{1,d}(M) \cap \LInfty, &  d\ge 3,\\
		H^{1,q}(M),& d=2,\\
		H^{1}(M),& d=1,\\
		\end{cases}
		\end{align}
		for some $q>2$ in the case $d=2.$
	Then, the problem
\begin{equation}
\label{ProblemMarcusExample}
\left\{
\begin{aligned}
\d u(t)&= \left(-\im A u(t)-\im  F(u(t))\right) dt-\im \sumM B_mu(t) \diamond dL_m(t) ,\\
u(0)&=u_0\in \EA,
\end{aligned}\right.
\end{equation}
		has a martingale solution which satisfies $\norm{u(t)}_H=\norm{u_0}_H$ almost surely for all $t\in[0,T]$ and
		\begin{align*}
		u\in L^q(\tilde{\Omega},L^\infty(0,T;\EA))
		\end{align*}
		for all $q\in [1,\infty).$
\end{corollary}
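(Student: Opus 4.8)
The plan is to verify, in each of the concrete cases, that the data $(A,\EA,F,(B_m))$ fall under the abstract Assumptions \ref{spaceAssumptions}, \ref{nonlinearAssumptions}, \ref{focusing} and \ref{noiseAssumptions}, so that the conclusion follows immediately from Theorem \ref{mainTheorem}. The three conclusions listed in the corollary (existence of a martingale solution, the regularity $u\in L^q(\tilde\Omega,L^\infty(0,T;\EA))$ and the mass conservation $\norm{u(t)}_H=\norm{u_0}_H$) are exactly the three assertions of Theorem \ref{mainTheorem}, so no further work is needed once the hypotheses are checked. Since, as the text notes, these verifications already appeared in \cite{ExistencePaper} for the Gaussian case and only the operator-theoretic and functional-analytic parts are relevant here, I would present the verification succinctly.

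First I would treat Assumption \ref{spaceAssumptions}. In each case $A$ is a non-negative self-adjoint operator with compact resolvent on $H=L^2(M)$ ($M$ compact manifold, or bounded (Lipschitz) domain with Dirichlet/Neumann conditions), and one takes $S=\Id+A$, which is strictly positive, commutes with $A$, has compact resolvent, and satisfies $\mathcal D(S^k)\hookrightarrow\EA$ for $k=1$. The crucial point is the Gaussian bound: the heat semigroup $e^{-tS}=e^{-t}e^{t\Delta}$ enjoys the classical upper Gaussian kernel estimate \eqref{GaussianEstimate} with $m=2$ on compact manifolds, on bounded domains with Dirichlet boundary conditions, and on bounded Lipschitz domains with Neumann boundary conditions; hence Remark \ref{GaussianRemark} applies with $p_0=1$. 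The subcriticality condition $\alpha\in(1,p_0'-1)=(1,\infty)$ together with compactness of $\EA=H^1(M)\hookrightarrow L^{\alpha+1}(M)$ is the Rellich--Kondrachov theorem, valid for $\alpha+1<\tfrac{2d}{(d-2)_+}$, i.e. $\alpha<1+\tfrac4{(d-2)_+}$, which is exactly the defocusing range in i); in the focusing case ii) the smaller range $\alpha<1+\tfrac4d$ is a fortiori covered. Finally $p_{\max}=\tfrac{2d}{(d-2)_+}$ and the embedding into $L^{p_{\max}}$ is continuous (not compact), as required.

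Next, Assumptions \ref{nonlinearAssumptions} and \ref{focusing} for $F(u)=\pm|u|^{\alpha-1}u$: the bound \eqref{nonlinearityEstimate} with $C_{F,1}=1$, $F(0)=0$, and the gauge identity \eqref{nonlinearityComplex} (since $\Real\langle \im u,|u|^{\alpha-1}u\rangle=\Real(\im\int|u|^{\alpha+1})=0$) are elementary; the $C^1$ bound \eqref{deriveNonlinearBound} and the antiderivative $\Fhat(u)=\pm\tfrac1{\alpha+1}\norm{u}_{L^{\alpha+1}}^{\alpha+1}$ with \eqref{antiderivative} are standard computations. In the defocusing case $\Fhat\ge0$ and \eqref{boundantiderivative} hold with $C_{F,3}=\alpha+1$; in the focusing case $\Fhat\le0$, \eqref{boundantiderivativeFocusing1} holds, and \eqref{interpolationFocusing} follows because $(H,\EA)_{\theta,1}\hookrightarrow H^\theta(M)\hookrightarrow L^{\alpha+1}(M)$ by Sobolev embedding provided $\theta>d(\tfrac12-\tfrac1{\alpha+1})=\tfrac{d(\alpha-1)}{2(\alpha+1)}$, and one checks that $\tfrac{d(\alpha-1)}{2(\alpha+1)}<\tfrac2{\alpha+1}$ is equivalent to $\alpha<1+\tfrac4d$, so such a $\theta\in(0,\tfrac2{\alpha+1})$ exists. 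For Assumption \ref{noiseAssumptions} (d), the multiplication operators $B_m x=e_m x$ with $e_m$ real-valued are self-adjoint on $H$; $e_m\in L^\infty$ gives boundedness on every $L^p$, and the condition $e_m\in H^{1,d}\cap L^\infty$ (resp.\ $H^{1,q}$, $H^1$ for $d=2,1$) ensures, via the product/Leibniz rule and Hölder (using $H^1\hookrightarrow L^{2d/(d-2)}$), that $u\mapsto e_m u$ maps $H^1(M)$ into itself, i.e. $B_m|_{\EA}\in\mathcal L(\EA)$; the parts (a)--(c) on the probability space, the pure-jump Lévy process and $\supp\nu\subset B$ are assumed outright. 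The main (and essentially only) obstacle is confirming the Gaussian heat-kernel bound in the Lipschitz-domain Neumann case and managing the $\alpha$-ranges in the focusing interpolation inequality; both are classical, so I would cite the relevant literature and \cite{ExistencePaper} rather than reproduce the arguments, and then invoke Theorem \ref{mainTheorem} to conclude.
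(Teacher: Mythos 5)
Your proposal is correct and follows essentially the same route as the paper: the paper's proof of Corollary \ref{CorollaryNLS} consists precisely of reducing to Theorem \ref{mainTheorem} by verifying Assumptions \ref{spaceAssumptions}, \ref{nonlinearAssumptions}, \ref{focusing} and \ref{noiseAssumptions}, deferring those verifications to \cite{ExistencePaper}, Section 3. Your sketch of the verifications (Gaussian bounds with $p_0=1$, Rellich--Kondrachov for the subcritical range, the explicit antiderivative and the interpolation condition $\alpha<1+\tfrac4d$ in the focusing case, and the multiplication-operator bounds for $B_m$) matches the cited material, so no gap remains.
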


\begin{proof}
	We refer to \cite{ExistencePaper}, Section 3, for the verification of the Assumptions in Theorem \ref{mainTheorem}.
\end{proof}	

Additionally to the stochastic NLS, we can also cover the fractional NLS with the Laplacians replaced by their fractional powers.

\begin{corollary}\label{CorollaryFractionalNLS}
	Choose one of the settings a), b) or c) in Corollary. Let $\beta>0$ and suppose that we have either $i)$ or $ii)$ below.
		\begin{itemize}
			\item[i)] $F(u)= \vert u\vert^{\alpha-1}u$ with $ \alpha \in \left(1,1+\frac{4\beta}{(d-2\beta)_+}\right)$,
			\item[ii)]$F(u)= -\vert u\vert^{\alpha-1}u$ with $ \alpha \in \left(1,1+\frac{4\beta}{d}\right),$
		\end{itemize}
	Let $B_m$ for $m=1,\dots, M$ as in Assumption \ref{noiseAssumptions}.
		Then, the problem
		\begin{equation}
		\label{ProblemMarcusExample}
		\left\{
		\begin{aligned}
		\d u(t)&= \left(-\im A^\beta u(t)-\im  F(u(t))\right) dt-\im \sumM B_mu(t) \diamond dL_m(t) ,\\
		u(0)&=u_0\in \mathcal{D}(A^\frac{\beta}{2}),
		\end{aligned}\right.
		\end{equation}
		has a martingale solution which satisfies $\norm{u(t)}_H=\norm{u_0}_H$ almost surely for all $t\in[0,T]$ and 
		\begin{align*}
		u\in L^q(\tilde{\Omega},L^\infty(0,T;\mathcal{D}(A^\frac{\beta}{2})))
		\end{align*}
		for all $q\in [1,\infty).$
\end{corollary}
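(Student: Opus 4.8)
The plan is to deduce Corollary \ref{CorollaryFractionalNLS} from Theorem \ref{mainTheorem} by checking that the operator $A^\beta$, the energy space $\mathcal{D}(A^{\frac{\beta}{2}})=\mathcal{D}((\Id+A^\beta)^{\frac{1}{2}})$, the nonlinearity $F(u)=\pm|u|^{\alpha-1}u$ and the noise satisfy Assumptions \ref{spaceAssumptions}, \ref{nonlinearAssumptions}, \ref{focusing} and \ref{noiseAssumptions}. Most of these verifications are identical to the non-fractional case treated in \cite{ExistencePaper}, Section 3, since they only use the $\LalphaPlusEins$-structure of the nonlinearity together with the Sobolev embeddings of the energy space; so I would only spell out the points that genuinely change when $A$ is replaced by its fractional power.

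For Assumption \ref{spaceAssumptions} I would first note that $A^\beta$ is again non-negative, selfadjoint and has compact resolvent. The one delicate point is the choice of the auxiliary operator $S$: the subordinated semigroup $e^{-tA^\beta}$ does in general \emph{not} satisfy Gaussian bounds, so rather than $S=A^\beta$ I would take $S=\Id+A$ (that is, $\Id-\beltrami$ on the manifold, or $\Id$ plus the Dirichlet resp.\ Neumann Laplacian on the domain). This $S$ is strictly positive with compact resolvent, it commutes with $A^\beta$ because both are Borel functions of $A$, and $e^{-tS}$ inherits the classical Gaussian heat-kernel estimate \eqref{GaussianEstimate} in all three geometric settings, so \eqref{generalizedGaussianEstimate} holds with $p_0=1$. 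Since $\mathcal{D}(S^k)=\mathcal{D}(A^k)$ and $\mathcal{D}(A^{\frac{\beta}{2}})$ equals the domain of the $(\beta/2)$-th power of $S$ up to equivalent norms, $\mathcal{D}(S^k)\hookrightarrow\mathcal{D}(A^{\frac{\beta}{2}})$ holds for every integer $k\ge\beta/2$. For part ii), $p_0=1$ gives $p_0'=\infty$, so the only restriction on $\alpha$ is the compactness of $\mathcal{D}(A^{\frac{\beta}{2}})\hookrightarrow\LalphaPlusEins$; by the fractional Sobolev embedding this holds for $\alpha+1<\frac{2d}{d-2\beta}$, equivalently $\alpha<1+\frac{4\beta}{d-2\beta}$, when $d>2\beta$, and for all finite $\alpha+1$ when $d\le 2\beta$ --- exactly the range $\alpha\in(1,1+\frac{4\beta}{(d-2\beta)_+})$ in the statement --- with the continuous (non-compact) endpoint embedding into $L^{p_{\max}}(M)$, $p_{\max}=\frac{2d}{d-2\beta}$, available when $d>2\beta$.

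For Assumptions \ref{nonlinearAssumptions} and \ref{focusing} the checks for $F(u)=\pm|u|^{\alpha-1}u$ --- the growth bound \eqref{nonlinearityEstimate}, $F(0)=0$, the gauge invariance \eqref{nonlinearityComplex} (since $\Real\duality{\im u}{|u|^{\alpha-1}u}$ is the real part of $\im\int_M|u|^{\alpha+1}\,\df x$, hence zero), the differentiability bound \eqref{deriveNonlinearBound}, and the antiderivative $\Fhat(u)=\pm\tfrac{1}{\alpha+1}\norm{u}_{\LalphaPlusEins}^{\alpha+1}$ fulfilling \eqref{antiderivative} --- are word for word those of \cite{ExistencePaper} and do not see the operator at all. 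In the defocusing case \eqref{boundantiderivative} is clear with $C_{F,3}=\alpha+1$; in the focusing case \eqref{boundantiderivativeFocusing1} holds, and the remaining task is the interpolation inequality \eqref{interpolationFocusing}: one needs $\theta\in(0,\tfrac{2}{\alpha+1})$ with $\left(H,\mathcal{D}(A^{\frac{\beta}{2}})\right)_{\theta,1}\hookrightarrow\LalphaPlusEins$. Since the real interpolation space has smoothness $\theta\beta$ and embeds into $\LalphaPlusEins$ as soon as $\theta\beta\ge d\bigl(\tfrac12-\tfrac1{\alpha+1}\bigr)=\tfrac{d(\alpha-1)}{2(\alpha+1)}$, such a $\theta$ can be taken below $\tfrac{2}{\alpha+1}$ precisely when $\tfrac{d(\alpha-1)}{2\beta}<2$, i.e.\ when $\alpha<1+\tfrac{4\beta}{d}$, which is the restriction imposed in the statement. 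Assumption \ref{noiseAssumptions} is part of the hypotheses of the corollary, so nothing further is needed.

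With all four assumptions in hand, Theorem \ref{mainTheorem} applied to $A^\beta$ yields a martingale solution of the stated fractional problem with $u\in L^q(\tilde\Omega,L^\infty(0,T;\mathcal{D}(A^{\frac{\beta}{2}})))$ for every $q\in[1,\infty)$ and with the mass identity $\norm{u(t)}_H=\norm{u_0}_H$ $\tilde{\Prob}$-a.s.\ for all $t\in[0,T]$, which is the assertion. I expect the only genuine obstacle, compared with Corollary \ref{CorollaryNLS}, to be exactly this decoupling of the regularising operator $S$ from the dynamical operator $A^\beta$: one must use the Gaussian bounds of the non-fractional operator $S=\Id+A$ to obtain the uniform $\mathcal{L}(\LalphaPlusEins)$-bound for the Littlewood--Paley pieces $S_n=s_n(S)$, while the a priori energy estimate and the mass conservation are produced by $A^\beta$ itself; once this split is arranged, everything else is a bookkeeping of Sobolev exponents as above.
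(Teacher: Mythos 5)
Your proposal is correct and follows exactly the route the paper intends: the corollary is deduced from Theorem \ref{mainTheorem} by verifying Assumptions \ref{spaceAssumptions}--\ref{focusing} as in \cite{ExistencePaper}, Section 3, and your key observation — decoupling the auxiliary operator by taking $S=\Id+A$ (one of the paper's own ``typical choices'') so that the Gaussian bounds and uniform $\mathcal{L}(\LalphaPlusEins)$-bounds for $S_n$ come from the non-fractional heat semigroup while $A^\beta$ drives the energy and mass estimates — is precisely the point of the abstract framework. The Sobolev-exponent bookkeeping for the subcritical range and the focusing interpolation condition \eqref{interpolationFocusing} matches the stated restrictions on $\alpha$, so nothing is missing.
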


\section{Compactness and Tightness Criteria}	\label{CompactnessSection}

This section is devoted to the compactness results which will be used to get  a martingale solution of $\eqref{ProblemMarcus}$ by the Faedo-Galerkin method.
We begin with a definition of the \cadlag functions and a generalization of the modulus of continuity to this class.	Throughout the section, $(\mathbb{S},\df)$ denotes a complete, separable metric space.

\begin{definition}\label{defCadlag}
	\begin{itemize}
		\item[a)] 		
		The space of all \emph{\cadlag functions} $f: [0,T]\to \mathbb{S},$ i.e. $f$ is right-continuous with left limit in every $t\in [0,T],$ is called $\D([0,T],\mathbb{S}).$
		\item[b)] For $u\in \D([0,T],\mathbb{S})$ and $\delta>0,$ we define the \emph{modulus}
		\begin{align*}
		w_\mathbb{S}(u,\delta):=\inf_{\Pi_\delta} \max_{t_j\in Q} \sup_{t,s\in [t_{j-1},t_j)}\df(u(t),u(s)),
		\end{align*}
		where $\Pi_\delta$ is the set of all partitions $Q=\left\{0=t_0<t_1<\dots<t_N=T\right\}$ of $[0,T]$ with
		\begin{align*}
		t_{j+1}-t_j\ge \delta,\qquad j=0,\dots, N-1.
		\end{align*}
		\item[c)] 	We denote the set of increasing homeomorphisms of $[0,T]$ by $\Lambda$ and  we equip $\D([0,T],\mathbb{S})$ with the metric defined by
			\begin{align*}
			\rho(u,v):= \inf_{\lambda \in \Lambda}\left[\sup_{t\in [0,T]}\df(u(t),v(\lambda(t)))+\sup_{t\in [0,T]} \vert t-\lambda(t)\vert+\sup_{s\neq t}\left\vert \log\frac{\lambda(t)-\lambda(s)}{t-s}\right\vert\right]
			\end{align*}
			for $u,v \in \D([0,T],\mathbb{S}).$	
	\end{itemize}
\end{definition}

The following Proposition is about the so-called \emph{Skohorod-topology} on $\D([0,T],\mathbb{S}).$

\begin{Prop}\label{cadlagTopology}
	\begin{itemize}
			\item[a)]  The pair $\bigl(\D([0,T],\mathbb{S}),\rho \bigr)$ is  a complete, separable metric space.
			\item[b)] A sequence $\left(u_n\right)_{n\in\N}\in \D([0,T],\mathbb{S})^\N$ is convergent to $u\in \D([0,T],\mathbb{S})$ in the metric $\rho$ if and only if
			there exists $\left(\lambda_n\right)_{n\in\N}\in \Lambda^\N$ with
			\begin{align*}
			\sup_{t\in [0,T]} \vert \lambda_n(t)-t\vert \to 0,\qquad
			\sup_{t\in [0,T]} \df(u_n(\lambda_n(t)),u(t))\to 0,\qquad n\to \infty.
			\end{align*}
	\end{itemize}
\end{Prop}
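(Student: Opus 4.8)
The plan is to treat Proposition \ref{cadlagTopology} as the standard description of the Skorokhod $J_1$-topology and to reduce everything to the classical arguments (as in Billingsley's monograph, stated there for finite-dimensional targets but transferring verbatim to a general complete separable $\mathbb{S}$). For part (a) I would first check that $\rho$ is a metric. Symmetry and the triangle inequality rest on the fact that $\Lambda$ is a group under composition together with the observation that each of the three quantities built into $\rho$ — the uniform $\mathbb{S}$-distance after a time change, $\sup_t|t-\lambda(t)|$, and the logarithmic slope $\sup_{s\neq t}|\log\frac{\lambda(t)-\lambda(s)}{t-s}|$ — is subadditive under composition of time changes and (for the latter two) invariant under inversion; this is made explicit by the changes of variable $t\mapsto\lambda(t)$. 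Definiteness follows because $\rho(u,v)=0$ yields $\lambda_n\to\id$ uniformly with $u(t)=\lim_n v(\lambda_n(t))$, so $u=v$ at every continuity point of $v$, and continuity points of a \cadlag function are co-countable, hence dense; right-continuity then gives $u\equiv v$. Separability I would obtain from the countable set of step functions with breakpoints in $\Q\cap[0,T]$ and values in a fixed countable dense subset $D\subset\mathbb{S}$: given $f\in\D([0,T],\mathbb{S})$ and $\eps>0$, the \cadlag modulus $w_\mathbb{S}(f,\delta)\to 0$ (Definition \ref{defCadlag}) produces a partition on which $f$ oscillates by less than $\eps$, and replacing $f$ by a suitable constant from $D$ on each piece gives a function at $\rho$-distance $O(\eps)$ from $f$ via a time change that is an arbitrarily small perturbation of the identity.

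The substantive point of part (a) is completeness, and it is here that the logarithmic slope term in the definition of $\rho$ is essential. Given a $\rho$-Cauchy sequence $(u_n)$, I would pass to a subsequence with $\rho(u_{n_k},u_{n_{k+1}})<2^{-k}$, choose $\lambda_k\in\Lambda$ making all three terms $<2^{-k}$, and form the composed time changes $\varphi_k:=\lambda_{k-1}\circ\cdots\circ\lambda_1$ (with $\varphi_1=\id$). By the subadditivity noted above, $\|\varphi_{k+1}-\varphi_k\|_\infty<2^{-k}$, so $\varphi_k\to\varphi$ uniformly; crucially the logarithmic slopes of $\varphi_k$ are bounded by $\sum_{j<k}2^{-j}<1$, whence $\varphi$ is bi-Lipschitz with constants in $[e^{-1},e]$ and thus a homeomorphism of $[0,T]$. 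Writing $\varphi\circ\varphi_k^{-1}=\lim_{j}\lambda_{j-1}\circ\cdots\circ\lambda_k$ and applying the tail estimate $\sum_{i\geq k}2^{-i}=2^{-k+1}$ shows that $\|\varphi\circ\varphi_k^{-1}-\id\|_\infty$ and the logarithmic slope of $\varphi\circ\varphi_k^{-1}$ are both at most $2^{-k+1}$. On the other hand a change of variable turns the choice of $\lambda_k$ into $\sup_s\df\bigl((u_{n_k}\circ\varphi_k)(s),(u_{n_{k+1}}\circ\varphi_{k+1})(s)\bigr)<2^{-k}$, so $u_{n_k}\circ\varphi_k$ converges uniformly to a bounded function $v$, which is \cadlag as a uniform limit of \cadlag functions. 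Setting $u:=v\circ\varphi^{-1}$ and taking $\lambda:=\varphi\circ\varphi_k^{-1}$ as the time change gives $\rho(u_{n_k},u)\le\|u_{n_k}\circ\varphi_k-v\|_\infty+2^{-k+1}+2^{-k+1}\to0$, and Cauchyness upgrades this to convergence of the whole sequence.

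For part (b), the implication $\rho(u_n,u)\to0\Rightarrow$ the stated condition is immediate: if $\lambda_n\in\Lambda$ realize $\rho(u_n,u)\to0$, then $\lambda_n^{-1}\in\Lambda$ works, since $\|\lambda_n^{-1}-\id\|_\infty=\|\lambda_n-\id\|_\infty$ and $\sup_t\df(u_n(\lambda_n^{-1}(t)),u(t))=\sup_s\df(u_n(s),u(\lambda_n(s)))$. The converse is the delicate direction, because the hypotheses place no control on the slopes of the $\lambda_n$, so one cannot simply invert. Instead, for fixed $\eps>0$ I would use $w_\mathbb{S}(u,\delta)\to0$ to choose a partition $0=s_0<\cdots<s_m=T$ of mesh $\le\eps$ on which $u$ oscillates by $<\eps$; for $n$ so large that $\|\lambda_n-\id\|_\infty$ is much smaller than the mesh, I replace $\lambda_n^{-1}$ by the piecewise-linear homeomorphism $\mu_n$ interpolating the values of $\lambda_n^{-1}$ at the $s_i$. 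Then $\mu_n$ is uniformly close to both $\lambda_n^{-1}$ and $\id$, its slopes lie within $\eps$ of $1$ (so its logarithmic slope is $O(\eps)$), and because the partition resolves the large jumps of $u$ the composed path changes by at most $O(\eps)$ in the uniform $\mathbb{S}$-distance; hence $\rho(u_n,u)\le C\eps$ for $n$ large.

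The main obstacle throughout — in completeness and in the converse of (b) — is controlling the logarithmic slope term: plain inverses or uniform limits of time changes need not have slopes converging to $1$. The two devices above circumvent this: subadditivity of all three error quantities under composition combined with summable errors along a subsequence (completeness), and piecewise-linear approximation of time changes anchored at a partition adapted to the \cadlag modulus of the limit (part (b)). Both are classical; a detailed treatment can be found in Billingsley's monograph, Chapter 3 (or Ethier--Kurtz, Chapter 3), and in the form closest to the present setting in \cite{Motyl} and the references therein.
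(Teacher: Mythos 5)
Your part (a) and the implication ``$\rho(u_n,u)\to 0$ implies the existence of the time changes $\lambda_n$'' are correct: the subadditivity of the three terms under composition, the definiteness argument through continuity points, the step-function separability argument, and the completeness proof via a subsequence with summable errors and the composed time changes $\varphi_k$ (whose logarithmic slopes stay bounded by $1$, so that the limit $\varphi$ is a bi-Lipschitz element of $\Lambda$) are exactly the classical arguments; this is the material the paper disposes of by citing Billingsley, pp.\ 123 and following.

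The converse direction of part (b), however, has a genuine gap as written. With $\mu_n$ the piecewise-linear interpolation of $\lambda_n^{-1}$ at the points $s_i$ of a partition adapted to $u$, the uniform-distance term requires comparing $u(\lambda_n^{-1}(t))$ with $u(\mu_n(t))$; for $t\in[s_{i-1},s_i)$ both points lie in the \emph{image} interval $[\lambda_n^{-1}(s_{i-1}),\lambda_n^{-1}(s_i))$, not in an interval of the partition on which the oscillation of $u$ is below $\eps$. Since the partition points are forced to sit exactly at the large jumps of $u$, that image interval can contain such a jump in its interior, and then ``the composed path changes by at most $O(\eps)$'' is false. Concretely, take $T=2$, $u=\mathbf{1}_{[1,2]}$, $u_n=\mathbf{1}_{[1+\frac1n,2]}$ and $\lambda_n$ piecewise linear with nodes $0,1,2$ and $\lambda_n(1)=1+\frac1n$; then $u_n\circ\lambda_n=u$ and $\sup_t\vert\lambda_n(t)-t\vert=\frac1n$, every admissible partition contains the point $1$, and $\mu_n(1)=\lambda_n^{-1}(1)=\frac{n}{n+1}$ while $\mu_n^{-1}(1)<1+\frac1n$ for $n$ large, so on the nonempty interval $[\mu_n^{-1}(1),1+\frac1n)$ one has $u(\mu_n(t))=1$ but $u_n(t)=0$. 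Hence $\sup_t \df\bigl(u_n(t),u(\mu_n(t))\bigr)=1$ and your bound $\rho(u_n,u)\le C\eps$ fails for this choice of time change. The standard repair (Billingsley, proof of the equivalence of the two Skorokhod metrics) is to anchor the interpolation to $\lambda_n$ rather than to $\lambda_n^{-1}$: let $\sigma_n$ be piecewise linear with $\sigma_n(s_i)=\lambda_n(s_i)$; then $\lambda_n^{-1}\circ\sigma_n$ maps each $[s_{i-1},s_i)$ into itself, so $\df\bigl(u_n(\sigma_n(t)),u(t)\bigr)\le \sup_s \df\bigl(u_n(\lambda_n(s)),u(s)\bigr)+\eps$, the slope and closeness estimates for $\sigma_n$ are as in your sketch (note $\sup_t\vert\sigma_n(t)-t\vert\le\sup_t\vert\lambda_n(t)-t\vert$ because a piecewise-linear function attains its extrema at the nodes), and one concludes by using $\sigma_n^{-1}$ as the competitor in the infimum defining $\rho(u_n,u)$.
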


\begin{proof}
	See \cite{Billingsley}, page 123 and following for a proof.
\end{proof}

\begin{definition}\label{DefinitionWeakTopologySpaces}
	Let $\mathbb{K}\in\{\R,\C\}$ and let  $X$ be a reflexive, separable $\mathbb{K}$-Banach space and $X^*$ its dual.
	\begin{enumerate}
		\item[a)] 	Then, we define
		$\weaklyCadlag{X}$ as the space of all  $u: [0,T]\to X$ such that
		\begin{align*}
			[0,T]\ni t\to \duality{u(t)}{x^*}\in \mathbb{K} \text{ is \cadlag for all $x^*\in X^*.$}
		\end{align*}
		We equip $\weaklyCadlag{X}$ with the weakest topology such that the map
		\begin{align*}
		\weaklyCadlag{X} \ni u \mapsto \duality{u(\cdot)}{x^*}\in \D([0,T],\mathbb{K})
		\end{align*}
		is continuous for all $x^*\in X^*.$
		\item[b)] For $r>0,$ we consider the ball
		$\mathbb{B}_X^r:=\left\{u\in X: \norm{u}_X\le r \right\}$ and define
		\begin{align*}
		\cadlagBallX:= \left\{ u\in \weaklyCadlag{X}: \sup_{t\in [0,T]} \norm{u(t)}_X \le r \right\}.
		\end{align*}
	\end{enumerate}
\end{definition}

\begin{remark}
	By the separability of $X,$  the weak topology on $\mathbb{B}_X^r$ is metrizable and we choose a corresponding metric $q.$ The notation in Definition \ref{DefinitionWeakTopologySpaces} is justified, i.e.
	\begin{align}\label{assertionRemarkWeakTop}
		\text{$\cadlagBallX$ coincides with $\D([0,T],\mathbb{S})$ for $(\mathbb{S},d)=(\mathbb{B}_X^r,q).$}
	\end{align}
	  In particular, $\cadlagBallX$ is a complete, separable metric space by Proposition  \ref{cadlagTopology}.	
	To show \eqref{assertionRemarkWeakTop}, we note that the right-continuity of $\duality{u(\cdot)}{x^*}$ for all $x^*\in X^*$ is equivalent to the right-continuity of $u$ in $(\mathbb{B}_X^r,q)$ by the definition of $q$. It is also easy to see that the existence of left limits transfers from $(\mathbb{B}_X^r,q)$ to $ \duality{\cdot}{x^*}$ for all $x^*\in X^*.$
	
	 For the converse direction, let $t_n\to t-.$ Then, for each $x^*\in X^*,$ there is $\gamma_{x^*}\in \mathbb{K}$ with $\duality{u(t_n)}{x^*}\to \gamma_{x^*}.$ Since $X$ is reflexive, $x^*\mapsto \gamma_{x^*}$ is linear and $\vert \gamma_{x^*}\vert \le r \norm{x^*}_{X^*},$ there is $v\in X$ such that $\gamma_{x^*}=\duality{v}{x^{*}}.$ Hence,  $q(u(t_n),v)\to 0.$
\end{remark}

\begin{lemma}\label{lem-boundednessWeaklyCadlag}
		Let $\mathbb{K}\in\{\R,\C\}$ and let  $X$ be a reflexive, separable $\mathbb{K}$-Banach space and let $u_n,u\in \weaklyCadlag{X}$ with $u_n\to u$ in $\weaklyCadlag{X}$ as $n\to\infty.$ 
	Then, we have
		\begin{align*}
	\sup_{n\in\N}\sup_{t\in [0,T]}\norm{u_n(t)}_X<\infty.
	\end{align*} 
\end{lemma}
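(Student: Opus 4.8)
The statement is a uniform-boundedness assertion for a convergent sequence in $\weaklyCadlag{X}$, so the natural route is to combine the definition of the topology on $\weaklyCadlag{X}$ with the (scalar) uniform boundedness principle, applied twice. The plan is to fix an arbitrary $x^*\in X^*$ and first deduce a bound on the real-valued functions $t\mapsto\duality{u_n(t)}{x^*}$.

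\textbf{Step 1: reduce to scalars.} By the definition of the topology on $\weaklyCadlag{X}$, the hypothesis $u_n\to u$ means that for every $x^*\in X^*$ we have $\duality{u_n(\cdot)}{x^*}\to\duality{u(\cdot)}{x^*}$ in $\D([0,T],\mathbb{K})$. By Proposition \ref{cadlagTopology} b), there are $\lambda_n\in\Lambda$ with $\sup_{t}\df(\duality{u_n(\lambda_n(t))}{x^*},\duality{u(t)}{x^*})\to 0$; in particular $\sup_{t\in[0,T]}|\duality{u_n(t)}{x^*}|$ is bounded uniformly in $n$ (a convergent sequence in $\D([0,T],\mathbb{K})$ is bounded in the sup-norm, since each limit function, being \cadlag, is bounded, and the $\lambda_n$ are bijections of $[0,T]$). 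Denote this bound by $C_{x^*}<\infty$.

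\textbf{Step 2: first application of uniform boundedness.} Fix $t\in[0,T]$. The family $\{u_n(t):n\in\N\}\subset X$ satisfies $\sup_n|\duality{u_n(t)}{x^*}|\le C_{x^*}<\infty$ for every $x^*\in X^*$, i.e.\ it is weakly bounded; by the uniform boundedness principle (identifying $u_n(t)$ with elements of $X^{**}$ via the canonical embedding, or simply invoking that weakly bounded subsets of a Banach space are norm bounded), $M_t:=\sup_n\norm{u_n(t)}_X<\infty$ for each fixed $t$.

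\textbf{Step 3: upgrade to uniformity in $t$ — the main obstacle.} This is the delicate point, since the bound from Step 2 is only pointwise in $t$ and need not be uniform. The cleanest fix is to go back to Step 1 and extract more: for fixed $x^*$, the sequence $g_n:=\duality{u_n(\cdot)}{x^*}$ converges in $\D([0,T],\mathbb{K})$, hence $\sup_n\sup_{t}|g_n(t)|=:C_{x^*}<\infty$ as already noted. Now consider the linear maps $T_n\colon X^*\to\D([0,T],\mathbb{K})$... but $\D$ is not a Banach space, so instead regard, for each partition point or rational $t$, the evaluation: the family $\{\,x^*\mapsto \duality{u_n(t)}{x^*} : n\in\N,\ t\in[0,T]\,\}$ is a family of bounded linear functionals on $X^*$ that is pointwise bounded (by $C_{x^*}$), so by the uniform boundedness principle applied on the Banach space $X^*$ there is a single constant $C$ with $\norm{u_n(t)}_{X^{**}}=\sup_{\norm{x^*}\le1}|\duality{u_n(t)}{x^*}|\le C$ for all $n$ and all $t$ simultaneously. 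Since $X$ is reflexive, $\norm{u_n(t)}_X=\norm{u_n(t)}_{X^{**}}\le C$, which is exactly the claim. The one thing to check carefully is that the pointwise bound $\sup_{n,t}|\duality{u_n(t)}{x^*}|\le C_{x^*}$ holds for each fixed $x^*$ — this is where the $\D([0,T],\mathbb{K})$-convergence of $g_n$ (via the $\lambda_n$ being surjective homeomorphisms, so that $\sup_t|g_n(t)|=\sup_t|g_n(\lambda_n(t))|\le \sup_t|g(t)|+o(1)$) is used, together with boundedness of each \cadlag limit function on the compact interval $[0,T]$.
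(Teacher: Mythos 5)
Your proof is correct and follows essentially the same route as the paper: reduce to scalar convergence in $\D([0,T],\mathbb{K})$ via the definition of the topology on $\weaklyCadlag{X}$, use Proposition \ref{cadlagTopology} together with the fact that the $\lambda_n$ are homeomorphisms of $[0,T]$ to get $\sup_{n}\sup_{t}|\duality{u_n(t)}{x^*}|<\infty$ for each fixed $x^*$, and then apply the uniform boundedness principle to the family indexed by $(n,t)$. Your Step 2 (the pointwise-in-$t$ bound) is redundant, as you yourself note, but harmless.
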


\begin{proof}
	From $u_n\to u$ in $\weaklyCadlag{X}$ as $n\to\infty,$ we infer that for every $x^*\in X^*,$ we have $\duality{u_n}{x^*}\to \duality{u}{x^*}$ in $\D([0,T],\mathbb{K})$ as $n\to\infty.$ Proposition \ref{cadlagTopology} therefore implies that for every $x^*\in X^*,$ there exists $\left(\lambda_n\right)\in \Lambda^\N$ with
	\begin{align*}
	\sup_{t\in [0,T]} \vert \lambda_n(t)-t\vert \to 0,\qquad
	\sup_{t\in [0,T]} \vert \duality{u_n(\lambda_n(t))}{x^*}-\duality{u(t)}{x^*}\vert \to 0,\qquad n\to \infty.
	\end{align*}
	In particular, we obtain
	\begin{align*}
		\sup_{n\in\N}\sup_{t\in [0,T]} \vert \duality{u_n(t)}{x^*}\vert=\sup_{n\in\N}\sup_{t\in [0,T]} \vert \duality{u_n(\lambda_n(t))}{x^*}\vert<\infty
	\end{align*}
	for every $x^*\in X^*.$ The uniform boundedness principle yields 
	\begin{align*}
		\sup_{n\in\N}\sup_{t\in [0,T]}\norm{u_n(t)}_X =\sup_{n\in\N}\sup_{t\in [0,T]}\sup_{\norm{x^*}_{X^*}\le 1}\vert \duality{u_n(t)}{x^*}\vert<\infty.
	\end{align*}
\end{proof}

We recall that the energy space $\EA$ is defined by $\EA:={\mathcal{D}((\Id+A)^\frac{1}{2})}.$
We continue with a criterion for convergence of a sequence in $\cadlagBall.$

\begin{lemma}\label{convergenceStetigBall}
	Let $r>0$ and $u_n: [0,T]\to \EA$  functions such that
	\begin{enumerate}
		\item[a)] $\sup_{n\in\N} \sup_{s\in [0,T]}\norm{u_n(s)}_\EA \le r$,
		\item[b)] $u_n\to u$ in $\cadlagHminuseins$ for $n\to \infty.$
	\end{enumerate}
	Then $u_n,u\in \cadlagBall$ for all $n\in\N$ and $u_n \to u$ in $\cadlagBall$ for $n\to \infty.$
\end{lemma}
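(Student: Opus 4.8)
The plan is to first establish that all the functions in question actually live in the relevant space, and then to upgrade the convergence in $\cadlagHminuseins$ to convergence in $\cadlagBall$ by exploiting the uniform $\EA$-bound in hypothesis a). For the membership statement, note that hypothesis a) gives $\sup_{s\in[0,T]}\norm{u_n(s)}_\EA\le r$, so each $u_n$ takes values in the ball $\mathbb{B}_\EA^r$; since $\cadlagHminuseins$-convergence in particular forces, along the reparametrising homeomorphisms $\lambda_n$ from Proposition~\ref{cadlagTopology}, the pointwise identity $u(t)=\lim_n u_n(\lambda_n(t))$ in $\EAdual$ for every continuity point $t$ of $u$, and weak-$*$ lower semicontinuity of the $\EA$-norm (via the reflexivity of $\EA$ and the Gelfand triple $\EA\hookrightarrow H\hookrightarrow\EAdual$) gives $\norm{u(t)}_\EA\le r$ at such $t$, and then right-continuity in $\EAdual$ together with the bound extends this to all $t\in[0,T]$; actually one argues that a c\`adl\`ag $\EAdual$-path with values a.e.\ in the (weakly closed, hence $\EAdual$-weakly closed) ball $\mathbb{B}_\EA^r$ has all its values there. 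Hence $u_n,u\in\cadlagBall$ as claimed, using the identification \eqref{assertionRemarkWeakTop} from the Remark.

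Next I would prove the convergence. Fix $x^*\in\EAdual$ (a dual vector for the $\EA$-scalar product); I must show $\duality{u_n(\cdot)}{x^*}\to\duality{u(\cdot)}{x^*}$ in $\D([0,T],\K)$. Since $\EA\hookrightarrow H\hookrightarrow\EAdual$ is a Gelfand triple and $\mathbb{B}_\EA^r$ is a bounded subset of $\EA$, the $\EAdual$-convergence $u_n\to u$ in the Skorohod metric supplies reparametrisations $\lambda_n\in\Lambda$ with $\sup_t|\lambda_n(t)-t|\to0$ and $\sup_t\norm{u_n(\lambda_n(t))-u(t)}_{\EAdual}\to0$. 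On the ball $\mathbb{B}_\EA^r$, which is relatively compact in $\EAdual$ by Lemma~\ref{spaceLemma}~b) (the embedding $\EA\hookrightarrow H$ is compact, hence so is $H\hookrightarrow\EAdual$ by duality, and in any case $\mathbb{B}_\EA^r$ is $\EAdual$-norm-bounded with the weak-$\EA$-topology metrized by $q$), the metric $q$ metrizing the weak $\EA$-topology is dominated near the ball by the $\EAdual$-norm in the sense that $\EAdual$-norm convergence of a sequence staying in $\mathbb{B}_\EA^r$ implies $q$-convergence. Consequently $\sup_t q(u_n(\lambda_n(t)),u(t))\to0$, which by Proposition~\ref{cadlagTopology}~b) applied in $(\mathbb{S},\df)=(\mathbb{B}_\EA^r,q)$ is exactly the statement $u_n\to u$ in $\cadlagBall=\D([0,T],(\mathbb{B}_\EA^r,q))$.

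The main obstacle is the comparison between the two topologies on the ball: one must argue that $\EAdual$-norm convergence of points confined to $\mathbb{B}_\EA^r$ implies convergence in the weak $\EA$-topology (equivalently in the metric $q$). This is where boundedness in $\EA$ is essential — an $\EAdual$-convergent bounded-in-$\EA$ sequence has a weak-$\EA$ limit, which must agree with the $\EAdual$-limit by uniqueness of limits in $\EAdual$, and boundedness upgrades this to convergence of the whole sequence in the weak $\EA$-topology; the uniformity in $t$ then follows because the estimate is quantitative through $q$ being controlled by the $\EAdual$-norm on the ball. A clean way to make the last point rigorous is to invoke that on a bounded subset of a reflexive space the weak topology induced by the larger (weaker) space $\EAdual$ coincides with its own weak topology, so that the $\sup_t$ over the reparametrised discrepancy passes through. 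With these two ingredients — the membership via weak lower semicontinuity of $\norm{\cdot}_\EA$ and the topology comparison on the ball — the lemma follows from Proposition~\ref{cadlagTopology}~b).
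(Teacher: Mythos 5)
Your proof is correct in substance. Note first that the paper itself gives no argument for this lemma: it simply cites \cite{Motyl}, Lemma 3.3, and what you have written is essentially a reconstruction of that cited result in the present setting. Your two main ingredients are the right ones: the ball $\mathbb{B}_{\EA}^r$ is weakly compact, metrizable by $q$, and closed in $\EAdual$ (so the limit path stays in the ball, giving the membership claim via \eqref{assertionRemarkWeakTop}), and the Skorohod convergence is transferred from $\cadlagHminuseins$ to $\D([0,T],(\mathbb{B}_{\EA}^r,q))$ along the same reparametrisations $\lambda_n$ supplied by Proposition \ref{cadlagTopology}.

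The one step you should tighten is the uniform-in-$t$ transfer. From ``$\EAdual$-norm convergence of a sequence confined to $\mathbb{B}_{\EA}^r$ implies $q$-convergence'' you cannot immediately conclude $\sup_{t}q(u_n(\lambda_n(t)),u(t))\to 0$; you need a uniform modulus on the ball, and the fact you invoke (coincidence of the weak topologies of $\EA$ and $\EAdual$ on bounded sets) compares two weak topologies, whereas your hypothesis is smallness in the $\EAdual$-\emph{norm} and the conclusion is smallness of $q$ uniformly over pairs of points. The fix is, however, immediate from what you have already established: your subsequence argument shows that the identity map $(\mathbb{B}_{\EA}^r,\norm{\cdot}_{\EAdual})\to(\mathbb{B}_{\EA}^r,q)$ is a continuous bijection, and the ball is compact in both metrics (by the compact embedding $\EA\hookrightarrow\EAdual$, respectively by Banach--Alaoglu and metrizability), so the identity is a homeomorphism between compact metric spaces and hence uniformly continuous --- exactly the modulus that lets the supremum over $t$ pass through. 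Alternatively, and closer to Motyl's own proof, one can estimate $q$ directly: choosing $q(x,y)=\sum_{k}2^{-k}\min\left(1,\vert\duality{x-y}{\varphi_k}\vert\right)$ for a dense sequence $(\varphi_k)\subset\EAdual$, approximating finitely many $\varphi_k$ by $\psi_k\in\EA$ and using
\begin{align*}
\vert\duality{x-y}{\varphi_k}\vert\le \norm{x-y}_{\EAdual}\norm{\psi_k}_{\EA}+2r\norm{\varphi_k-\psi_k}_{\EAdual},\qquad x,y\in\mathbb{B}_{\EA}^r,
\end{align*}
gives: for every $\eps>0$ there is $\delta>0$ such that $q(x,y)<\eps$ whenever $x,y\in\mathbb{B}_{\EA}^r$ and $\norm{x-y}_{\EAdual}<\delta$. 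This route uses only the density of $\EA$ in $\EAdual$ and the bound a), not the compactness of the embedding, and the same estimate also shows that an $\EAdual$-c\`adl\`ag path with values in the ball is weakly c\`adl\`ag, which cleanly settles the membership of $u_n$ and $u$ as well.
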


\begin{proof}
	See \cite{Motyl}, Lemma 3.3.
\end{proof}

We continue with a Lemma stated in Lions \cite{Lions}, p. 58.

\begin{lemma}[Lions]\label{LionsLemma}
	Let $X,X_0,X_1$ be Banach spaces with $X_0 \hookrightarrow X \hookrightarrow X_1$ where the first embedding is compact. Assume furthermore that $X_0, X_1$ are reflexive and $p\in [1,\infty).$ Then, for each $\varepsilon>0$ there is $C_\varepsilon>0$ with
	\begin{align*}
	\Vert x \Vert_X^p \le \varepsilon \Vert x\Vert_{X_0}^p+C_\varepsilon \Vert x\Vert_{X_1}^p,\quad x\in X_0.
	\end{align*}
\end{lemma}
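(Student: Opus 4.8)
The plan is to prove the inequality by a compactness--contradiction argument exploiting the reflexivity of $X_0$ and the compactness of the embedding $X_0\embed X$. Suppose the estimate fails. Then there exist $\eps_0>0$ and a sequence $(x_n)_{n\in\N}\subset X_0$ with
\[
\norm{x_n}_X^p > \eps_0\,\norm{x_n}_{X_0}^p + n\,\norm{x_n}_{X_1}^p ,\qquad n\in\N.
\]
Since the right-hand side is nonnegative, $\norm{x_n}_X>0$, so each $x_n\neq 0$, and because all three norms are $1$-homogeneous I may replace $x_n$ by $x_n/\norm{x_n}_X$ and assume $\norm{x_n}_X=1$ for all $n$. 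The inequality then becomes $1>\eps_0\norm{x_n}_{X_0}^p+n\norm{x_n}_{X_1}^p$, from which I read off the two bounds
\[
\norm{x_n}_{X_0}^p<\frac{1}{\eps_0},\qquad \norm{x_n}_{X_1}^p<\frac1n\longrightarrow 0 .
\]

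Next I would use reflexivity: $(x_n)$ is bounded in $X_0$, so along a subsequence (not relabelled) there is $x\in X_0$ with $x_n\rightharpoonup x$ weakly in $X_0$. Since $X_0\embed X$ is compact, this weak convergence upgrades to strong convergence in $X$, whence $\norm{x}_X=\lim_n\norm{x_n}_X=1$. On the other hand $X\embed X_1$ is continuous, so $x_n\to x$ in $X_1$ as well; combined with $\norm{x_n}_{X_1}\to 0$ this forces $x=0$ in $X_1$, and since the embedding $X\embed X_1$ is injective, $x=0$ in $X$ — contradicting $\norm{x}_X=1$. Hence the asserted estimate holds for every $\eps>0$ with a suitable $C_\eps$.

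The one step that carries the actual content is the passage from weak convergence in $X_0$ to norm convergence in $X$: this is precisely the statement that a compact linear embedding is completely continuous, and it relies on reflexivity of $X_0$ to guarantee that the bounded sequence $(x_n)$ has a weakly convergent subsequence in the first place. I expect this to be the main (indeed essentially the only) conceptual point; everything else is homogeneity rescaling and bookkeeping of the two a priori bounds. (Reflexivity of $X_1$ is not strictly needed for this argument, but is harmless and is available in the intended applications.)
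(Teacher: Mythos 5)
Your proof is correct: the paper does not reproduce an argument for this lemma (it only cites \cite{FHornungPhD}, Lemma 2.34), and your compactness--contradiction argument with the normalization $\norm{x_n}_X=1$, the bounds $\norm{x_n}_{X_0}\lesssim 1$, $\norm{x_n}_{X_1}\to 0$, and the weak-to-strong upgrade through the compact embedding is exactly the standard proof of this Ehrling/Lions-type estimate. Your closing remark is also accurate, and one can even drop reflexivity of $X_0$: since the embedding $X_0\hookrightarrow X$ is compact, the bounded sequence $(x_n)$ already has a subsequence converging strongly in $X$, which is all the argument needs.
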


\begin{proof}
	See \cite{FHornungPhD}, Lemma 2.34.
\end{proof}

We define a space $Z_T$ by
\begin{align}
\label{eqn-Z_T}
Z_T&:=\cadlagHminuseins\cap \LalphaPlusEinsAlphaPlusEins \cap \weaklyCadlag{\EA}=:Z_1\cap Z_2\cap Z_3.
\end{align}
We equip $Z_T$  with the supremum-topology, i.e. the smallest topology that contains $\bigcup_{j=1}^3 \mathcal{O}_j,$ where $\mathcal{O}_j$ is the trace of the $Z_j$-topology in $Z_T.$

In the next Proposition, we give a criterion for compactness in $Z_T.$ This result generalises Theorem 2 of Section 3 from \cite{Motyl}. For a continuous version of this result see Proposition 4.2 of \cite{ExistencePaper}. Our proof is along the similar lines to Proposition 5.7 of the first and third named authours \cite{Brz+Manna_2017_SLLGELevy}.

\begin{Prop}\label{CompactnessDeterministic}
	Let $K$ be a subset of $Z_T$ and $r>0$ such that
	\begin{enumerate}
		\item[a)] $
		\sup_{z\in K} \sup_{t\in [0,T]}\norm{z(t)}_\EA\le r ;
		$
		\item[b)] $
		\lim_{\delta \to 0} \sup_{z\in K} w_{\EAdual}(z,\delta)=0.$
	\end{enumerate}
	Then, $K$ is relatively compact in $Z_T.$
\end{Prop}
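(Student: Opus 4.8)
The plan is to establish relative compactness in $Z_T$ by verifying it separately for each of the three component topologies $Z_1 = \cadlagHminuseins$, $Z_2 = \LalphaPlusEinsAlphaPlusEins$, and $Z_3 = \weaklyCadlag{\EA}$, since the topology on $Z_T$ is the supremum of the three traces. Concretely, I would show that any sequence $(z_n) \subset K$ has a subsequence converging in all three spaces simultaneously, which suffices because a set in the supremum topology is relatively compact once it is relatively compact (with compatible subsequences) in each factor. The starting point in all three cases is the uniform energy bound (a) together with the equicontinuity-type condition (b) on the $\EAdual$-modulus.

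\textbf{Step 1: compactness in $Z_1 = \D([0,T],\EAdual)$.} Here I would invoke a standard Aldous--Kurtz--type characterization of relative compactness in the Skorokhod space $\D([0,T],\mathbb{S})$ for a metric space $\mathbb{S}$: a set is relatively compact if the time-sections $\{z(t): z\in K, t\in[0,T]\}$ are relatively compact in $\mathbb{S}$ and $\lim_{\delta\to 0}\sup_{z\in K} w_{\mathbb{S}}(z,\delta)=0$. With $\mathbb{S} = \EAdual$, the modulus condition is exactly hypothesis (b). For the relative compactness of time-sections, note that (a) says all values $z(t)$ lie in the ball $\mathbb{B}^r_{\EA}$, and since $\EA \hookrightarrow H \hookrightarrow \EAdual$ with the embedding $\EA \hookrightarrow \EAdual$ compact (this follows from Lemma \ref{spaceLemma} b), as $\EA \embed H$ is compact and $H\embed\EAdual$ is continuous), bounded subsets of $\EA$ are relatively compact in $\EAdual$. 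Hence $K$ is relatively compact in $Z_1$; extract a subsequence converging to some $u$ there.

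\textbf{Step 2: upgrade to convergence in $Z_3 = \weaklyCadlag{\EA}$.} Along the subsequence from Step 1, condition (a) gives $\sup_n \sup_t \norm{z_n(t)}_\EA \le r$, so by Lemma \ref{convergenceStetigBall} (with the roles of $u_n$, $u$ played by the subsequence), the convergence $z_n \to u$ in $\cadlagHminuseins$ automatically promotes to $z_n \to u$ in $\cadlagBall \subset \weaklyCadlag{\EA}$. This is the key labor-saving device: it means no separate tightness argument is needed for the weak-$\EA$ component beyond what is already secured.

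\textbf{Step 3: compactness in $Z_2 = L^{\alpha+1}(0,T;\LalphaPlusEins)$.} This is the step I expect to be the main obstacle, since the weak-type convergence from Steps 1--2 must be converted into strong $L^{\alpha+1}$-in-space convergence. The plan is an interpolation argument in the spirit of the Lions--Aubin lemma. For fixed $t$, write $z_n(t) - u(t) \in \EA$; apply Lemma \ref{LionsLemma} with $X_0 = \EA$, $X = \LalphaPlusEins$ (compact embedding by Assumption \ref{spaceAssumptions} ii), and $X_1 = \EAdual$ to obtain, for any $\eps > 0$, $\norm{z_n(t) - u(t)}_{L^{\alpha+1}}^{\alpha+1} \le \eps\norm{z_n(t)-u(t)}_\EA^{\alpha+1} + C_\eps \norm{z_n(t)-u(t)}_{\EAdual}^{\alpha+1}$. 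Integrating in $t$ over $[0,T]$, the first term is bounded by $\eps(2r)^{\alpha+1}T$ uniformly in $n$ by (a), while the second term tends to $0$ as $n\to\infty$ by the $Z_1$-convergence from Step 1 (using that Skorokhod convergence plus uniform bounds in the stronger space yields convergence in $L^p$ in time of the $\EAdual$-norm, via the dominated convergence theorem once one notes the limit $u$ has at most countably many jumps and the time-change $\lambda_n$ from Proposition \ref{cadlagTopology} satisfies $\sup_t|\lambda_n(t)-t|\to 0$). Letting $\eps \to 0$ gives $z_n \to u$ in $Z_2$. Combining the three steps, the extracted subsequence converges to $u$ in all of $\mathcal{O}_1, \mathcal{O}_2, \mathcal{O}_3$, hence in the supremum topology, so $K$ is relatively compact in $Z_T$.

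The delicate point in Step 3 is justifying the passage from Skorokhod convergence in $\EAdual$ to $L^{\alpha+1}(0,T)$-convergence of the $\EAdual$-norms despite the possible time-shifts $\lambda_n$; I would handle this by arguing that $\int_0^T \norm{z_n(t) - u(t)}_{\EAdual}^{\alpha+1}\,dt$ can be split using $\norm{z_n(t) - u(t)}_{\EAdual} \le \norm{z_n(t) - u(\lambda_n^{-1}(t))}_{\EAdual} + \norm{u(\lambda_n^{-1}(t)) - u(t)}_{\EAdual}$, where the first term's supremum over $t$ vanishes by Proposition \ref{cadlagTopology} b) and the second vanishes in $L^{\alpha+1}(0,T)$ by the (right-)continuity of $u$ off its jump set together with $\sup_t|\lambda_n^{-1}(t) - t| \to 0$ and a dominated convergence estimate using the uniform bound $\norm{u(t)}_{\EAdual} \le C\norm{u(t)}_\EA \le Cr$.
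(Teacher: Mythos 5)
Your proposal is correct and follows essentially the same route as the paper: relative compactness in $\D([0,T],\EAdual)$ from the compact embedding $\EA\hookrightarrow\EAdual$ plus the modulus condition (the paper cites Motyl's Theorem 3.2 for this), the upgrade to $\weaklyCadlag{\EA}$ via Lemma \ref{convergenceStetigBall}, and the Lions Lemma interpolation combined with a.e. pointwise convergence and dominated convergence for the $\LalphaPlusEinsAlphaPlusEins$ component. Your explicit time-change argument in Step 3 merely reproves the fact the paper takes from Billingsley (equation (12.14)), so the two proofs are the same in substance.
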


\begin{proof}
	Let $K$ be a subset of $Z_T$ such that the assumptions $a)$ and $b)$ are fullfilled and $\left(z_n\right)_{n\in\N}\subset K.$
	
	\emph{Step 1:} The relative compactness of $K$ in $\cadlagHminuseins$ is an immediate consequence of Theorem 3.2 in \cite{Motyl}. Hence, we can take a subsequence again denoted by $\left(z_n\right)_{n\in\N}$ and $z\in \cadlagHminuseins$ with $z_n \to z$ in $\cadlagHminuseins.$ By Lemma \ref{convergenceStetigBall}, we infer that  $z_n \to z$ in $\weaklyCadlag{\EA}$ and $\sup_{t\in [0,T]}\norm{z(t)}_\EA\allowbreak\le r.$

	\emph{Step 2:} We fix again $\varepsilon>0.$ By \dela{Lions'}{the Lions} Lemma $\ref{LionsLemma}$ with  $X_0=\EA,$ $X=\LalphaPlusEins,$ \\$X_1=\EAdual,$ $p=\alpha+1$ and $\varepsilon_0=\frac{\varepsilon}{2 T (2r)^{\alpha+1}}$ we get
	\begin{align}\label{LionsEstimate}
	\Vert v \Vert_\LalphaPlusEins^{\alpha+1} \le \varepsilon_0 \Vert v\Vert_{\EA}^{\alpha+1}+C_{\varepsilon_0} \Vert v\Vert_{\EAdual}^{\alpha+1}
	\end{align}
	for all $v\in\EA.$
	Integration with respect to time yields
	\begin{align*}
	\Vert z_n-z\Vert_\LalphaPlusEinsAlphaPlusEins^{\alpha+1} &\le {\varepsilon_0} \Vert z_n-z\Vert_{L^{\alpha+1}(0,T;\EA)}^{\alpha+1}+C_{\varepsilon_0} \Vert z_n-z\Vert_{L^{\alpha+1}(0,T;\EAdual)}^{\alpha+1};
	\end{align*}
	\begin{align*}
		{\varepsilon_0} \Vert z_n-z\Vert_{L^{\alpha+1}(0,T;\EA)}^{\alpha+1}
		\le  {\varepsilon_0} T \Vert z_n-z\Vert_{L^\infty(0,T;\EA)}^{\alpha+1}
		\le {\varepsilon_0} T \left(2 r\right)^{\alpha+1}
		\le \frac{\varepsilon}{2}.
	\end{align*}
	By \cite{Billingsley}, p.124, equation $(12.14),$ convergence in $\cadlagHminuseins$ implies $z_n(t)\to u(t)$ in $\EAdual$ for almost all $t\in [0,T].$ By Assumption $a),$ Lebesgue's Theorem yields $z_n \to z$ in $L^{\alpha+1}(0,T;\EAdual).$ Hence,
	\begin{align*}
		\limsup_{n\to \infty} \Vert z_n-z\Vert_\LalphaPlusEinsAlphaPlusEins^{\alpha+1}\le \frac{\varepsilon}{2}
	\end{align*}
	for all $\varepsilon>0$ and thus, the sequence $\left(z_n\right)_{n\in\N}$ is also converges to $u$ in $\LalphaPlusEinsAlphaPlusEins$.\\
%
\end{proof}

In the following, we want to obtain a criterion for tightness in $Z_T.$ Therefore, we introduce the Aldous condition.

\begin{definition}\label{DefinitionAldous}
	 Let $(X_n)_{n\in\N}$ be a sequence of stochastic processes in a Banach space $E.$ Assume that for every $\varepsilon>0$ and $\eta>0$ there is $\delta>0$ such that for every sequence $(\tau_n)_{n\in\N}$ of $[0,T]$-valued stopping times one has
		\begin{align*}
		\sup_{n\in\N} \sup_{0<\theta \le \delta} \Prob \left\{ \Vert X_n((\tau_n+\theta)\land T)-X_n(\tau_n)\Vert_E\ge \eta \right\}\le \varepsilon.
		\end{align*}
		In this case, we say that $(X_n)_{n\in\N}$
		satisfies the Aldous condition $[A].$
\end{definition}

The following Lemma (see \cite{Motyl}, Lemma A.7) gives us a useful consequence of the Aldous condition $[A].$

\begin{lemma} \label{AldousLemma}			
	Let $(X_n)_{n\in\N}$ be a sequence of stochastic processes in a Banach space $E,$ which satisfies the Aldous condition $[A].$ Then, for every $\varepsilon>0$ there exists a measurable subset $A_\varepsilon \subset \D([0,T],E)$ such that
	\begin{align*}
	\Prob^{X_n}(A_\varepsilon)\ge 1-\varepsilon,\qquad
	\lim_{\delta\to 0} \sup_{u\in A_\varepsilon} w_{E}(u,\delta)=0.
	\end{align*}
\end{lemma}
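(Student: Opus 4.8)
The plan is to deduce the statement from the Aldous condition by a Chebyshev-type argument applied to the increments over random intervals, followed by a transfer of the resulting pathwise modulus estimate through the laws $\Prob^{X_n}$. First I would unwind the Aldous condition $[A]$: fixing $\varepsilon>0$, I want to produce a \emph{single} measurable set $A_\varepsilon\subset\D([0,T],E)$ of near-full measure under every $\Prob^{X_n}$ on which $w_E(\cdot,\delta)\to0$ uniformly. The natural candidate is built from the deterministic modulus: for each $k\in\N$ apply $[A]$ with $\eta=\frac1k$ and $\varepsilon$ replaced by $\varepsilon 2^{-k}$ to obtain $\delta_k>0$, and set $A_\varepsilon$ to be (the closure/measurable hull of) the set of $u\in\D([0,T],E)$ with $w_E(u,\delta_k)\le \frac2k$ for all $k$. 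Then $\lim_{\delta\to0}\sup_{u\in A_\varepsilon}w_E(u,\delta)=0$ is automatic from the definition of $A_\varepsilon$, so the real content is the probability bound $\Prob^{X_n}(A_\varepsilon)\ge1-\varepsilon$, i.e. $\Prob\{w_E(X_n,\delta_k)>\frac2k\}\le \varepsilon2^{-k}$ summed over $k$.

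The key step is therefore to bound $\Prob\{w_E(X_n,\delta)>\eta\}$ in terms of the quantity controlled by $[A]$, namely $\sup_{0<\theta\le\delta}\Prob\{\|X_n((\tau_n+\theta)\wedge T)-X_n(\tau_n)\|_E\ge\eta\}$ over stopping times $\tau_n$. For \cadlag processes this is the classical Aldous–Kurtz argument: one partitions $[0,T]$ along a deterministic grid of mesh $\sim\delta$ and uses the fact that the modulus $w_E$ compares $u(t)$ and $u(s)$ for $t,s$ in a common grid cell; the oscillation of a \cadlag path inside a cell is itself a supremum which one linearises via a sequence of hitting-time stopping times $\tau_n$ of the event ``the path has moved by more than $\eta/3$ since the start of the cell''. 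Each such $\tau_n$ is $[0,T]$-valued and adapted, so $[A]$ applies at $\theta$ equal to the (deterministic) remaining length of the cell, and a union bound over the $\sim T/\delta$ cells turns the per-interval estimate into a bound on $\Prob\{w_E(X_n,\delta)>\eta\}$. Choosing $\delta$ small relative to $\eta$ in the first Aldous quantifier makes the union bound as small as we like.

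The main obstacle is purely technical rather than conceptual: one must be careful that the stopping times constructed inside a cell are genuinely $[0,T]$-valued $\Filtration$-stopping times (so that $[A]$ is applicable), that the measurability of $A_\varepsilon$ as a subset of $\D([0,T],E)$ is handled correctly — the set $\{w_E(\cdot,\delta)\le c\}$ is closed in the Skorokhod topology, hence Borel, which is what one needs — and that the ``$\wedge T$'' truncation at the right end of $[0,T]$ does not spoil the counting. I expect the cleanest route is to cite the $\D([0,T],E)$-version of the Aldous criterion directly (this is exactly \cite{Motyl}, Lemma A.7, referenced in the statement), in which case the ``proof'' reduces to assembling the $\delta_k$ as above and invoking that lemma for each $k$; the only thing to verify by hand is then the harmless bookkeeping that a countable intersection of near-full-measure sets with geometrically decaying deficits is again near-full-measure, uniformly in $n$.
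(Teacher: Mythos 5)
Your outer construction is sound and is in fact exactly how the cited source argues: the paper itself gives no proof of this lemma beyond the reference to \cite{Motyl}, Lemma A.7, and Motyl's proof is your reduction --- for each $k$ obtain $\delta_k>0$ with $\sup_{n\in\N}\Prob\{w_{E}(X_n,\delta_k)\ge 1/k\}\le \eps 2^{-k}$, put $A_\eps=\bigcap_{k}\{u\in\D([0,T],E):\ w_E(u,\delta_k)\le 1/k\}$ (each factor is closed in the Skorokhod topology, hence $A_\eps$ is Borel), and use the monotonicity of $\delta\mapsto w_E(u,\delta)$ together with the geometrically decaying deficits. Be aware, however, that Motyl's Lemma A.7 \emph{is} the statement you are asked to prove, so ``citing it'' is not a proof of it; what can legitimately be quoted is the intermediate criterion (Aldous' theorem, which Motyl takes from M\'etivier) that $[A]$ implies: for every $\eps',\eta>0$ there is $\delta>0$ with $\sup_{n}\Prob\{w_E(X_n,\delta)\ge\eta\}\le\eps'$.

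The genuine gap is in your sketch of precisely that intermediate step. A deterministic grid of mesh $\sim\delta$ with a union bound over $\sim T/\delta$ cells is incompatible with the quantifier order in $[A]$: the condition produces, for given $\eps'$ and $\eta$, a $\delta$ depending on $\eps'$, and the per-cell failure probability it yields is of size $\eps'$, not $o(\delta)$; summing over $T/\delta$ cells gives $\eps' T/\delta$, which you cannot make small since $\delta$ is chosen \emph{after} $\eps'$. Worse, small oscillation inside \emph{deterministic} cells is a uniform-grid modulus that c\`adl\`ag processes with jumps (e.g.\ the Poisson stochastic integrals in this paper) need not have: a jump lands inside some fixed cell with probability of order one, which is exactly why $w_E$ is defined with adaptive partitions. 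Finally, applying $[A]$ ``at $\theta$ equal to the remaining length of the cell'' compares $X_n$ at the hitting time with its value at a later deterministic time, not with its value at the start of the cell, so even the per-cell estimate needs an extra averaging-in-$\theta$ (two ordered stopping times) lemma. The correct argument takes the random partition given by the successive $\eta$-oscillation times $\sigma_0=0$, $\sigma_{j+1}=\inf\{t>\sigma_j:\ \norm{X_n(t)-X_n(\sigma_j)}_E\ge\eta\}$, and applies $[A]$ twice: first to bound, uniformly in $n$ and with high probability, the number $m$ of such times in $[0,T]$, and then again with $\eps'/(2m)$ to force all gaps $\sigma_{j+1}-\sigma_j$ to exceed some $\delta>0$; on that event these times form an admissible partition for $w_E(X_n,\delta)$ with oscillation at most $2\eta$ per cell. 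With this replacement (or by quoting M\'etivier's theorem, as Motyl does, rather than Lemma A.7 itself), your construction of $A_\eps$ goes through.
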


The deterministic compactness result in Proposition $\ref{CompactnessDeterministic}$ and the last Lemma can be used to get the following  tightness criterion  in $Z_T.$

\begin{Prop}\label{TightnessCriterion}
	Let $(X_n)_{n\in\N}$ be a sequence of adapted $\EAdual$-valued processes satisfying the Aldous condition $[A]$ in $\EAdual$ and
	\begin{align*}
	\sup_{n\in\N} \E \left[\sup_{t\in[0,T]}\Vert X_n(t)\Vert_\EA^2\right] <\infty.
	\end{align*}
	Then, the sequence $\left({\Prob}^{X_n}\right)_{n\in\N}$ is tight in $Z_T.$
\end{Prop}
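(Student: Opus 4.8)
The plan is to deduce tightness in $Z_T$ from the deterministic compactness criterion of Proposition~\ref{CompactnessDeterministic} together with the Aldous-condition consequence of Lemma~\ref{AldousLemma}, via Prokhorov's theorem applied on a conveniently chosen relatively compact set. Since $Z_T$ is not metrizable (it contains the weak-\cadlag factor $\weaklyCadlag{\EA}$), I cannot directly invoke Prokhorov, so the first task is to reduce to a metrizable setting: I would restrict attention to the set
\begin{align*}
B_r := \cadlagHminuseins \cap \cadlagBall,
\end{align*}
which, by Lemma~\ref{convergenceStetigBall} and the remark following Definition~\ref{DefinitionWeakTopologySpaces}, sits inside $Z_T$ and carries a Polish topology (the trace of the $Z_1$-topology, which on the $\EA$-ball coincides with the $Z_3$- and $Z_2$-traces by Step~1 and Step~2 of the proof of Proposition~\ref{CompactnessDeterministic}). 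The key point is that once we produce, for each $\eps>0$, a \emph{relatively compact} subset $K_\eps \subset Z_T$ with $\Prob^{X_n}(K_\eps) \ge 1-\eps$ for all $n$, tightness follows, because relative compactness in $Z_T$ together with the bound on the $\EA$-norm forces the candidate set to lie in $B_r$ up to closure, and a subsequential-limit / Prokhorov argument on $B_r$ then gives weak convergence of subsequences.

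The concrete construction of $K_\eps$ is the heart of the argument. First I would use the uniform moment bound $\sup_n \E[\sup_{t}\norm{X_n(t)}_\EA^2] < \infty$ together with Chebyshev to find $r = r(\eps) > 0$ with
\begin{align*}
\Prob\left\{ \sup_{t\in[0,T]} \norm{X_n(t)}_\EA > r \right\} \le \frac{\eps}{2}, \qquad n \in \N.
\end{align*}
Second, since $(X_n)$ satisfies the Aldous condition $[A]$ in $\EAdual$, Lemma~\ref{AldousLemma} (applied with $E = \EAdual$ and $\eps/2$ in place of $\eps$) yields a measurable set $A_{\eps/2} \subset \cadlagHminuseins$ with $\Prob^{X_n}(A_{\eps/2}) \ge 1 - \eps/2$ for all $n$ and $\lim_{\delta\to0} \sup_{u \in A_{\eps/2}} w_{\EAdual}(u,\delta) = 0$. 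Setting
\begin{align*}
K_\eps := \overline{\left\{ z \in Z_T : \sup_{t\in[0,T]} \norm{z(t)}_\EA \le r,\ z \in A_{\eps/2} \right\}}^{\,Z_T},
\end{align*}
the (pre-closure) set satisfies exactly hypotheses a) and b) of Proposition~\ref{CompactnessDeterministic} — the $\EA$-ball bound and the uniform vanishing of the $\EAdual$-modulus — so $K_\eps$ is relatively compact, hence compact, in $Z_T$. A subtle point to check is measurability of $K_\eps$ as a subset of $Z_T$ and that the event $\{X_n \in K_\eps\}$ is $\F$-measurable; this follows because $\{\sup_t \norm{X_n(t)}_\EA \le r\}$ is measurable (by weak lower semicontinuity of the norm and \cadlag-ness, it reduces to a countable supremum over a dense set of times in $\EAdual$-duality) and $A_{\eps/2}$ is measurable in $\cadlagHminuseins$ by Lemma~\ref{AldousLemma}, while $X_n$ is an adapted $\EAdual$-valued \cadlag process. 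Combining the two probability estimates gives $\Prob^{X_n}(K_\eps) \ge 1 - \eps$ uniformly in $n$.

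Finally I would assemble the pieces: the family $(K_\eps)_{\eps>0}$ witnesses tightness of $(\Prob^{X_n})_{n\in\N}$ on $Z_T$ in the sense of inner regularity by compact sets, which is precisely the definition of tightness for Borel probability measures on a (possibly non-metrizable) topological space — and since each $K_\eps$ is metrizable and Polish (being a compact subset of the Polish space $B_r$), the usual Prokhorov-type conclusions remain available on subsequences. The main obstacle I anticipate is the non-metrizability of $Z_T$: one must be careful that the notion of tightness used is the ``compact inner approximation'' one and that relative compactness in the supremum-topology of $Z_T$ genuinely follows from Proposition~\ref{CompactnessDeterministic} — here the already-proven fact that on $\EA$-bounded sets the $Z_1$-, $Z_2$- and $Z_3$-topologies are mutually compatible (Steps~1 and~2 of that proof) is what saves us, since it lets the weak-\cadlag and the $L^{\alpha+1}$-in-time components be controlled purely through $\cadlagHminuseins$-convergence plus the uniform $\EA$-bound. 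A secondary technical point is verifying the measurability of the sublevel event $\{\sup_{t\in[0,T]}\norm{X_n(t)}_\EA \le r\}$, which I would handle by writing the $\EA$-norm as a countable supremum of continuous functionals and using right-continuity of the paths.
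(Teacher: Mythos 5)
Your proposal is correct and follows essentially the same route as the paper: Chebyshev applied to the uniform second-moment bound gives the $\EA$-ball event, Lemma \ref{AldousLemma} gives the set with uniformly vanishing $\EAdual$-modulus, and the closure of the intersection is compact in $Z_T$ by Proposition \ref{CompactnessDeterministic}, yielding $\Prob^{X_n}(K_\eps)\ge 1-\eps$ uniformly in $n$. The preliminary detour through Prokhorov and metrizability is unnecessary (tightness here is just the compact inner-approximation property, which your construction establishes directly, exactly as in the paper), but it does not affect the validity of the argument.
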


\begin{proof}
	Let $\varepsilon>0.$ With $R_1:= \left(\frac{2}{\varepsilon} \sup_{n\in\N} \E \left[ \sup_{t\in [0,T]}\Vert X_n(t)\Vert_\EA^2\right]\right)^{\frac{1}{2}},$ we obtain
	\begin{align*}
	\Prob\left\{ \sup_{t\in [0,T]}\Vert X_n(t)\Vert_\EA> R_1\right\}\le \frac{1}{R_1^2}\E \left[\sup_{t\in [0,T]}\Vert X_n(t)\Vert_\EA^2\right]\le \frac{\varepsilon}{2}.
	\end{align*}
	By Lemma $\ref{AldousLemma}$, one can use the Aldous condition $[A]$ to find a Borel subset $A$ of $\cadlagHminuseins$ such that
	\begin{align*}
	\inf_{n\in\N} \Prob^{X_n}\left(A\right)\ge 1-\frac{\varepsilon}{2}, \qquad \mbox{ and }\lim_{\delta\to 0} \sup_{u\in A} w_{\EAdual}(u,\delta)=0.
	\end{align*}
	We define $K:= \overline{A\cap B}$ where $B:= \left\{u\in Z_T: \sup_{t\in [0,T]}\Vert X_n(t)\Vert_\EA\le R_1 \right\}.$ This set $K$ is compact in $Z_T$ by Proposition $\ref{CompactnessDeterministic}$ and we can estimate
	\begin{align*}
	\Prob^{X_n}(K)\ge \Prob^{X_n}\left(A\cap B\right)\ge \Prob^{X_n}\left(A\right)-\Prob^{X_n}\left( B^c\right)\ge 1-\frac{\varepsilon}{2}-\frac{\varepsilon}{2}=1-\varepsilon
	\end{align*}
	for all $n\in \N.$
\end{proof}

In metric spaces, one can apply Prokhorov Theorem (see \cite{parthasaraty}, Theorem II.6.7) and Skohorod Theorem (see \cite{Billingsley}, Theorem 6.7.) to obtain a.s.-convergence from tightness. Since $Z_T$ is not a metric space, we use the  following generalization due to Jakubowski \cite{JakubowskiSkorohodTopology} and Brze\'zniak et al \cite{{BrzezniakHausenblasReactionDiffusion}} in the variant of Motyl, \cite{Motyl}, Corollary 7.3.

\begin{Prop}\label{SkohorodJakubowski}
	Let $\mathcal{X}_1 $ be a complete separable metric space and $\mathcal{X}_2$ a topological space such that there is a sequence of continuous functions $f_m: \mathcal{X}_2\to \R$ that separates points of $\mathcal{X}_2.$ Define $\mathcal{X}:=\mathcal{X}_1\times \mathcal{X}_2$ and equip $\mathcal{X}$ with the topology induced by the canonical projections $\pi_j:\mathcal{X}_1\times \mathcal{X}_2\to \mathcal{X}_j.$
	Let $(\Omega, \F, \Prob)$ be a probability space and $\left(\chi_n\right)_{n\in\N}$ be a tight sequence of random variables in $\left(\mathcal{X},\B(\mathcal{X}_1)\otimes \mathcal{A}\right),$ where $\mathcal{A}$ is the $\sigma$-algebra generated by $f_m,$ $m\in\N.$ Assume that there is a random variable $\eta$ in $\mathcal{X}_1$ such that $\Prob^{\pi_1\circ \chi_n}=\Prob^{\eta}.$
	
	Then,  there are a subsequence $\left(\chi_{n_k}\right)_{k\in\N}$ and random variables $\tilde{\chi}_k,$ $\tilde{\chi}$ in $\mathcal{X}$ for $k\in\N$ on a common probability space $(\tilde{\Omega},\tilde{\F},\tildeProb)$ with
	\begin{enumerate}
		\item[i)] $\tildeProb^{\tilde{\chi}_k}=\Prob^{\chi_{n_k}}$ for $k\in\N,$
		\item[ii)] $\tilde{\chi}_k \to \tilde{\chi}$ in $\mathcal{X}$ almost surely for $k\to \infty,$
		\item[iii)] $\pi_1\circ \tilde{\chi}_k=\pi_1\circ \tilde{\chi}$ almost surely.
	\end{enumerate}
\end{Prop}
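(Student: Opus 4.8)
The plan is to combine Jakubowski's non-metric version of the Skorokhod representation theorem, which will deliver assertions (i) and (ii), with a disintegration-and-recoupling argument over the common first marginal to obtain (iii).

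First I would check that the product space $\mathcal{X}=\mathcal{X}_1\times\mathcal{X}_2$ itself carries a countable family of continuous real-valued functions separating its points, which is the hypothesis under which Jakubowski's theorem is available. Since $\mathcal{X}_1$ is a separable metric space with metric $d$, picking a countable dense subset $(x_j)_{j\in\N}$ the functions $g_j:=d(\cdot,x_j)$ are continuous, separate the points of $\mathcal{X}_1$, and satisfy $\sigma(g_j:j\in\N)=\B(\mathcal{X}_1)$. Consequently the countable family $\{g_j\circ\pi_1:j\in\N\}\cup\{f_m\circ\pi_2:m\in\N\}$ consists of continuous functions on $\mathcal{X}$, separates the points of $\mathcal{X}$, and generates precisely $\B(\mathcal{X}_1)\otimes\mathcal{A}$, so that every $\chi_n$ is measurable with respect to the $\sigma$-algebra generated by this family. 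Applying Jakubowski's theorem (\cite{JakubowskiSkorohodTopology}; see also \cite{BrzezniakHausenblasReactionDiffusion} and \cite{Motyl}, Corollary 7.3) to the tight sequence $(\chi_n)_{n\in\N}$ then produces a subsequence $(\chi_{n_k})_{k\in\N}$, a probability space $(\tilde\Omega,\tilde\F,\tildeProb)$, and $\mathcal{X}$-valued random variables $\tilde\chi_k$, $\tilde\chi$ on it such that $\tildeProb^{\tilde\chi_k}=\Prob^{\chi_{n_k}}$ for all $k$ and $\tilde\chi_k\to\tilde\chi$ $\tildeProb$-almost surely in $\mathcal{X}$. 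This is already (i) and (ii).

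To obtain (iii), I would decompose $\tilde\chi_k=(\tilde\chi_k^1,\tilde\chi_k^2)$ and $\tilde\chi=(\tilde\chi^1,\tilde\chi^2)$ along $\mathcal{X}=\mathcal{X}_1\times\mathcal{X}_2$. Continuity of the canonical projections gives $\tilde\chi_k^1\to\tilde\chi^1$ and $\tilde\chi_k^2\to\tilde\chi^2$ almost surely, while the assumption $\Prob^{\pi_1\circ\chi_n}=\Prob^{\eta}$ together with (i) forces the law of every $\tilde\chi_k^1$, and hence of $\tilde\chi^1$, to equal $\lambda:=\Prob^{\eta}$. Since we are allowed to change the underlying probability space, the idea is to re-realise the whole sequence over a single, fixed $\lambda$-distributed random variable serving as the common $\pi_1$-component. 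Because $\mathcal{X}_1$ is Polish and $(\mathcal{X}_2,\mathcal{A})$ is identified, through the injective measurable map $x_2\mapsto(f_m(x_2))_{m\in\N}$, with a measurable subspace of the Polish space $\R^{\N}$, the laws $\mu_k:=\Prob^{\chi_{n_k}}$ and $\mu_\infty:=\tildeProb^{\tilde\chi}$ admit disintegrations $\mu_k=\lambda\otimes\kappa_k$ and $\mu_\infty=\lambda\otimes\kappa_\infty$ with respect to their common first marginal $\lambda$, where $\kappa_k,\kappa_\infty$ are probability kernels from $\mathcal{X}_1$ to $(\mathcal{X}_2,\mathcal{A})$. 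After passing to a further subsequence and enlarging the probability space by an independent uniform random variable, I would use a joint measurable selection to realise the kernels $\kappa_k(\bar\eta,\cdot)$ and $\kappa_\infty(\bar\eta,\cdot)$ simultaneously over one $\lambda$-distributed variable $\bar\eta$, producing random variables $\bar\chi_k=(\bar\eta,\bar\chi_k^2)$ and $\bar\chi=(\bar\eta,\bar\chi^2)$ whose laws are $\mu_k$ and $\mu_\infty$, for which $\bar\chi_k\to\bar\chi$ almost surely, and which satisfy $\pi_1\circ\bar\chi_k=\bar\eta=\pi_1\circ\bar\chi$ for every $k$. Relabelling the enlarged probability space and the variables $\bar\chi_k$, $\bar\chi$ accordingly yields (i), (ii) and (iii) together.

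The step I expect to be the real obstacle is this last re-coupling. Transferring the almost sure convergence obtained from Jakubowski's construction to a coupling with a \emph{common} $\mathcal{X}_1$-component is not automatic: the conditional laws $\kappa_k(x_1,\cdot)$ need not converge for $\lambda$-almost every $x_1$ along the subsequence delivered by the first step, so one must make a further, diagonal-type selection of subsequence — exploiting the tightness of $(\mu_k)_k$ and the countable separating family $(f_m)_m$ — and invoke a measurable selection theorem to construct the realisations of the kernels; checking that convergence persists through these operations is the technical heart of the proof. The remaining items (measurability of the various maps, the identification of $(\mathcal{X}_2,\mathcal{A})$ with a subspace of $\R^{\N}$, consistency of the disintegrations, and the relabelling) are routine. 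For the complete argument we follow \cite{Motyl}, Corollary 7.3, which is itself based on \cite{JakubowskiSkorohodTopology} and \cite{BrzezniakHausenblasReactionDiffusion}.
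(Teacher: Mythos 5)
Your first half is fine: building the countable separating family $\{g_j\circ\pi_1\}\cup\{f_m\circ\pi_2\}$ and invoking Jakubowski's theorem gives i) and ii), and this is exactly the route the paper relies on (the paper itself offers no proof but cites Jakubowski, Brze\'zniak--Hausenblas--Razafimandimby and Motyl, Corollary 7.3). The genuine gap is iii), which is precisely the content of the proposition that goes beyond Jakubowski's theorem, and your disintegration-and-recoupling sketch does not close it. Almost sure convergence is a property of the \emph{joint} law of the whole sequence $(\tilde\chi_k)_k$ together with $\tilde\chi$; once you discard the coupling produced by Jakubowski's construction and re-realise each marginal law $\mu_k=\lambda\otimes\kappa_k$ separately over a common $\lambda$-distributed $\bar\eta$ (via an independent uniform variable and a measurable selection), you have no control on the joint distribution of $(\bar\chi_k)_k$ and $\bar\chi$, so ii) is lost. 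Repairing this would require $\kappa_k(x_1,\cdot)\to\kappa_\infty(x_1,\cdot)$ in a strong enough sense for $\lambda$-a.e.\ $x_1$ \emph{and} a quantile-type coupling producing a.s.\ convergence --- neither of which is available, as you yourself concede before falling back on the very reference the paper cites; a proof that ends by citing the statement being proved is not a proof.

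There is a second, more structural obstruction to the recoupling route. Your identification of $(\mathcal{X}_2,\mathcal{A})$ with a measurable subspace of $\R^{\N}$ only controls the topology generated by the functionals $f_m$, which is in general strictly coarser than the topology of $\mathcal{X}_2$ (in the application, convergence of the functionals $f_{m,3}$ on $Z_3=\weaklyCadlag{\EA}$ amounts to pointwise-type convergence and does not imply convergence in the Skorokhod-type topology; compare the paper's remark that $\sigma(f_{m,3}:m\in\N)\subsetneq\B(Z_3)$). Hence even a successful coupling in the embedded picture would only give a.s.\ convergence of $(f_m(\bar\chi_k^2))_m$, not convergence $\bar\chi_k^2\to\bar\chi^2$ in $\mathcal{X}_2$ as required in ii); recovering the actual topology needs the tightness/compact-set mechanism inside Jakubowski's proof, where on compact subsets the injective continuous map into $\R^{\N}$ is a homeomorphism onto its image. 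This is why, in the cited literature, the equality of the first components is built into the Skorokhod--Jakubowski construction from the start (exploiting that all $\pi_1$-laws equal $\Prob^{\eta}$ on the Polish factor $\mathcal{X}_1$), rather than grafted on afterwards by disintegration. As it stands, your argument establishes i) and ii) but not iii).
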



%
%

\section{Energy Estimates for the  solutions of the Galerkin approximation}

In the following section, we formulate an approximation of \eqref{ProblemMarcus} and prove existence and uniqueness, conservation of the $L^2$-norm as well as uniform bounds of the energy of the solutions to the approximated equation.

Recall from Lemma $\ref{spaceLemma},$ that $S$ has the representation
\begin{align*}
S x=\sum_{m=1}^\infty \lambda_m \skpH{x}{h_m} h_m, \quad x\in \mathcal{D}(S)\mbox{ and } \mathcal{D}(S)=\left\{x\in H: \sum_{m=1}^\infty \lambda_m^2 \vert \skpH{x}{h_m}\vert^2<\infty\right\},
\end{align*}
with an orthonormal basis $\left(h_m\right)_{m\in\N}$ of the complex Hilbert space $\big(H,\skpH{\cdot}{\cdot}\big),$ eigenvalues \\$\lambda_m>0$ such that $\lambda_m\to \infty$ as $m\to \infty.$
For $n\in \N_0,$ we set
\begin{align*}
H_n:=\operatorname{span}\left\{h_m: m\in\N, \lambda_m< 2^{n+1}\right\}
\end{align*}
and denote the orthogonal projection from $H$ to $H_n$ by $P_n,$ i.e.
\begin{align*}
P_n x = \sum_{\lambda_m< 2^{n+1}} \skpH{x}{h_m}h_m, \qquad x\in {H}.
\end{align*}
Since $S$ and $A$ commute by Assumption \ref{spaceAssumptions}, we deduce that  $\norm{P_n}_\LinearOperators{\EA}\le 1$ and
by density of $H$ in $\EAdual,$ we can extend $P_n$ to an operator $P_n: \EAdual\to H_n$ with $\norm{P_n}_{\EAdual\to \EAdual}\le 1$ and
\begin{align}\label{PnInEAdual}
\duality{v}{P_n v}\in \R, \qquad \duality{v}{P_n w}=\skpH{P_n v}{w}, \qquad v\in \EAdual, \quad w\in \EA.
\end{align}
Unfortunately,
 the operators $P_n$, $n\in\N_0,$ are, in general, not uniformly bounded from $\LalphaPlusEins$ to $\LalphaPlusEins.$ Therefore, we have to use another sequence operators introduced in \cite{ExistencePaper} to cut off the noise terms.

\begin{Prop}\label{PaleyLittlewoodLemma}
	There \dela{is}{exists} a sequence $\left(S_n\right)_{n\in\N_0}$ of self-adjoint operators $S_n: H \to H_n$ for $n\in\N_0$ with
	$S_n \psi \to \psi$ in $E_A$ for $n\to \infty$ and $\psi \in E_A$  and the uniform norm estimates
	\begin{align}\label{SnUniformlyBounded}
	\sup_{n\in\N_0}\norm{S_n}_{{\mathcal{L}(H)}}\le 1, \quad \sup_{n\in\N_0} \norm{S_n}_{\mathcal{L}(\EA)}\le 1, \quad \sup_{n\in\N_0} \norm{S_n}_{\mathcal{L}(L^{\alpha+1})}<\infty.
	\end{align}
\end{Prop}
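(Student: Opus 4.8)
The plan is to define the operators $S_n$ via the functional calculus of the operator $S$, using a smooth partition-of-unity-type construction at the level of the spectral parameter $\lambda$. Concretely, following the picture in Figure \ref{FigureSnPn}, I would fix a function $s\colon [0,\infty)\to[0,1]$ which equals $1$ on $[0,1]$, equals $0$ on $[2,\infty)$, and interpolates smoothly on $[1,2]$ (the explicit quintic polynomial $\lambda\mapsto -6(\lambda-1)^5+15(\lambda-1)^4-10(\lambda-1)^3+1$ from the figure, chosen so that $s$ is $C^2$ with vanishing derivatives at the endpoints). Then I set $s_n(\lambda):=s(2^{-n}\lambda)$ and define $S_n:=s_n(S)$ through the spectral representation of Lemma \ref{spaceLemma} c), i.e.
\begin{align*}
S_n x=\sum_{m=1}^\infty s_n(\lambda_m)\skpH{x}{h_m}h_m,\qquad x\in H.
\end{align*}
Since $s_n$ is supported in $[0,2^{n+1})$ and the eigenvalues satisfy $\lambda_m\to\infty$, only finitely many terms are nonzero, so $S_n$ maps $H$ into $H_n=\operatorname{span}\{h_m:\lambda_m<2^{n+1}\}$ and has finite-dimensional range; selfadjointness is immediate because $s_n$ is real-valued.

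Next I would verify the three properties. Selfadjointness on $H$ and the bound $\norm{S_n}_{\mathcal L(H)}\le 1$ follow from $0\le s_n\le 1$ together with the spectral theorem. Because $S$ commutes with $A$ (Assumption \ref{spaceAssumptions} i)), $S_n=s_n(S)$ commutes with $(\Id+A)^{1/2}$, hence $\norm{S_n}_{\mathcal L(\EA)}=\norm{S_n}_{\mathcal L(H)}\le 1$. For the strong convergence $S_n\psi\to\psi$ in $\EA$: for $\psi\in\EA$ one has $\norm{(\Id+A)^{1/2}(S_n-\Id)\psi}_H^2=\sum_m |s_n(\lambda_m)-1|^2 (1+\tilde\lambda_m)|\skpH{\psi}{h_m}|^2$ (with $\tilde\lambda_m$ the $A$-eigenvalue on $h_m$, using simultaneous diagonalisation), and since $s_n(\lambda_m)-1=0$ as soon as $\lambda_m<2^n$, i.e. for all but finitely many $m$ once $n$ is large, dominated convergence for series (dominating sequence $4(1+\tilde\lambda_m)|\skpH{\psi}{h_m}|^2$, summable because $\psi\in\EA$) gives the claim.

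The main obstacle is the uniform $L^{\alpha+1}$-bound $\sup_n\norm{S_n}_{\mathcal L(L^{\alpha+1})}<\infty$; this is where the generalized Gaussian $(p_0,p_0')$-bounds and $\alpha+1<p_0'$ really enter. The strategy is to write $S_n=m_n(S)$ as a spectral multiplier and invoke a spectral multiplier / Mikhlin–Hörmander theorem valid on spaces of homogeneous type under generalized Gaussian estimates — precisely the framework of the Littlewood–Paley theory referenced in the introduction and developed in \cite{ExistencePaper}. One checks that the functions $\lambda\mapsto s(2^{-n}\lambda)$ satisfy Hörmander-type derivative bounds uniformly in $n$ (they are dilates of a single fixed smooth compactly supported function, so $\sup_n\sup_\lambda |\lambda^k s_n^{(k)}(\lambda)|<\infty$ for $k\le$ the required order, which is finite because the doubling property \eqref{doubling} fixes a homogeneous dimension), and the multiplier theorem then yields a single constant bounding $\norm{s_n(S)}_{\mathcal L(L^{q})}$ for $q$ in an interval around $2$ determined by $(p_0,p_0')$, in particular for $q=\alpha+1$ since $\alpha\in(1,p_0'-1)$. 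I would cite the relevant statement (e.g. from \cite{blunckKunstmann} or the references collected in \cite{ExistencePaper}) rather than reprove it; once it is in hand, taking the supremum over $n$ gives \eqref{SnUniformlyBounded} and completes the proof.
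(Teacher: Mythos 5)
Your proposal follows essentially the same route as the paper: you define $S_n=s_n(S)$ with $s_n(\lambda)=s(2^{-n}\lambda)$ a dilated spectral cutoff (the paper builds exactly this profile from a smooth dyadic partition of unity), obtain the $H$- and $\EA$-bounds, the range condition and the convergence $S_n\psi\to\psi$ from the functional calculus and the commutation with $A$, and derive the uniform $L^{\alpha+1}$-bound from a spectral multiplier theorem under generalized Gaussian estimates (the paper cites Kunstmann--Uhl \cite{kunstmannUhl}) after checking Mihlin-type bounds $\sup_\lambda\vert\lambda^k s_n^{(k)}(\lambda)\vert\le C_k$ uniformly in $n$, which hold because the $s_n$ are dilates of a single fixed cutoff. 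The one caveat is that your explicit quintic cutoff is only $C^2$, whereas the multiplier theorem may demand derivative bounds up to an order $\gamma>2$ determined by the doubling dimension, so you should take a genuinely smooth cutoff (as the paper does); with that adjustment the argument matches the paper's proof.
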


A proof of this result can be in \cite{ExistencePaper}, Proposition 5.2. For convenience of the reader, we present an alternative proof. 

\begin{proof}
\emph{Step 1.} 
We take a function $\rho\in C_c^\infty(0,\infty)$ with $\operatorname{supp} \rho \subset [\frac{1}{2},2]$ and $\sum_{m\in\Z} \rho(2^{-m} t)=1$ for all $t>0.$ For the existence of $\rho$ with these properties, we refer to \cite{bergh1976interpolation}, Lemma 6.1.7. 
Then, we fix $n\in\N_0$ and define 
\begin{align*}
s_n: (0,\infty)\to \C,\qquad s_n(\lambda):=\sum_{m=-\infty}^{n}\rho(2^{-m}\lambda).
\end{align*}
Let $k\in \Z$ and $\lambda\in [2^{k-1},2^{k}).$  From $\operatorname{supp} \rho \subset [\frac{1}{2},2],$ we infer
\begin{align*}
1=\sum_{m=-\infty}^\infty \rho(2^{-m}\lambda)=\rho(2^{-(k-1)}\lambda)+\rho(2^{-k}\lambda)=\sum_{m=-\infty}^k \rho(2^{-m}\lambda).
\end{align*}
In particular
\begin{align}\label{SnMultiplier}
s_n(\lambda)=\begin{cases}
1,\hspace{2cm}   &\lambda\in (0,2^n),\\
\rho(2^{-n}\lambda), &\lambda\in [2^{n},2^{n+1}),\\
0,&\lambda\ge 2^{n+1}.\\
\end{cases}
\end{align}
We define $S_n:=s_n(S)$ for $n\in\N_0.$ Since $s_n$ is real-valued and bounded by $1,$ the operator $S_n$ is selfadjoint with $\norm{S_n}_\LinearOperators{H}\le 1.$ Furthermore, $S_n$ and $A$ commute due to the assumption that $S$ and $A$ commute. In particular, this implies $\norm{S_n}_{\mathcal{L}(\EA)}\le 1$ and $S_n \psi \to \psi$ for all $\psi\in \EA$ by the convergence property of the Borel functional calculus. Moreover, the range of $S_n$ is contained in $H_n$ since we have the representation
\begin{align*}
S_n x=\sum_{\lambda_m < 2^n} \skpH{x}{h_m} h_m+ \sum_{\lambda_m \in [ 2^n,2^{n+1})} \rho(2^{-n}\lambda_m) \skpH{x}{h_m} h_m,\quad x\in H,
\end{align*}
as a consequence of \eqref{SnMultiplier}. 

\emph{Step 2.} Next, we show the uniform estimate in $\LalphaPlusEins$ based on a spectral multiplier theorem by Kunstmann and Uhl, \cite{kunstmannUhl}, for operators with generalized Gaussian bounds.
In view of Theorem 5.3 in \cite{kunstmannUhl}, Lemma 2.19 and Fact 2.20 in \cite{Uhl}, it is sufficient to show that $s_n$ satisfies the Mihlin condition
\begin{align}\label{MihlinFn}
\sup_{\lambda>0} \vert \lambda^k s_n^{(k)}(\lambda)\vert\le C_k,\qquad k=0,\dots, \gamma,
\end{align}
for some $\gamma\in \N$ uniformly in $n\in\N_0.$  This  can be verified by the calculation
\begin{align*}
\sup_{\lambda>0} \vert \lambda^k s_n^{(k)}(\lambda)\vert=&\sup_{\lambda\in [2^{n},2^{n+1})} \vert \lambda^k s_n^{(k)}(\lambda)\vert=\sup_{\lambda\in [2^{n},2^{n+1})} \left\vert \lambda^k \frac{\df^k }{\df \lambda^k}\rho(2^{-n}\lambda)\right\vert\le 2^k \norm{\rho^{(k)}}_\infty
\end{align*}
for all $k\in \N_0.$ 
\end{proof}	

We set
\begin{align*}
\Bn(l)=\sumM l_m S_n B_m S_n,\qquad n\in \N, \quad l\in \R^N
\end{align*}
and 
	\begin{align}\label{eqn-approx initial data}
	\widetilde{u_{0,n}}:=
	\begin{cases}
S_n u_0 \frac{\norm{u_0 }_H}{\norm{S_n u_0 }_H}, &  S_n u_0\neq 0,\\
0,& S_n u_0=0.		
	\end{cases}
	\end{align}
From $S_n u_0 \to u_0$ in $H,$  we infer 
\begin{align}\label{initialValueConvergence}
	\widetilde{u_{0,n}}\to u_0,\qquad n\to\infty.
\end{align}
Moreover, there is $C_0>0$ such that we have
\begin{align}\label{initialValueBoundedness}
	1\le\frac{\norm{u_0 }_H}{\norm{S_n u_0 }_H}\le C_0.
\end{align} 
for $n\ge n_0(u_0):=\min\{n\in\N: S_n u_0\neq 0\}\in\N\cup \left\{\infty \right\}.$ 
For $n\in \N,$ we consider the Galerkin equation
\begin{align}\label{marcusGalerkin}
u_n(t)  & = \widetilde{u_{0,n}} -\im \int_0^t\left( A u_n(s)+P_n F(u_n(s))\right) \,\df s + \int_{0}^{t}\! \int_{\left\{\vert l \vert\le 1\right\}} \!\left[\groupBn u_n(s-) - u_n(s-)\right]\, \tilde{\eta}(\df s,\df l)\nonumber\\
&\quad + \int_{0}^{t}\! \int_{\left\{\vert l \vert\le 1\right\}} \! \left\{ \groupBn u_n(s) - u_n(s) + \im \Bn(l) u_n(s)\right\}\, \nu(\df l)\df s, \;\; t \in [0,T].
\end{align}
In order to prove the global wellposedness of \eqref{marcusGalerkin} and estimates for the solution $u_n$ uniformly in $n\in\N,$ we need some auxiliary Lemmata. We start with properties of the operators $\Bn(l).$

\begin{lemma}\label{BnProperties}
	Let $n\in\N$ and $l\in \R^N.$ Then, we have
	\begin{align*}
	\norm{\Bn(l)}_{\LinearOperators{\Lzwei}}\le \vert l\vert b_\Lzwei^\frac{1}{2},\qquad \norm{\Bn(l)}_{\LinearOperators{\EA}}\le \vert l\vert b_\EA^\frac{1}{2},\qquad \norm{\Bn(l)}_{\LinearOperators{L^{\alpha+1}}}\le \vert l\vert b_{{\alpha+1}}^\frac{1}{2} \sup_{n\in\N} \norm{S_n}_\LinearOperators{L^{\alpha+1}}^2.
	\end{align*}
	Moreover, $\left(\groupBnT\right)_{t\in\R}$ is a group of unitary operators on $\Lzwei$ with
	\begin{align*}
	\norm{ \groupBnT}_\LinearOperators{\EA} \le e^{\vert t\vert \vert l\vert b_\EA^\frac{1}{2} },\qquad \norm{ \groupBnT}_\LinearOperators{L^{\alpha+1}}\le e^{\vert t\vert \vert l\vert b_{\alpha+1}^\frac{1}{2} \sup_{n\in\N} \norm{S_n}_\LinearOperators{L^{\alpha+1}}^2},\qquad t\in\R.
	\end{align*}	
\end{lemma}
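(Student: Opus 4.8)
The plan is to prove the operator norm bounds first and then bootstrap the group bounds from them. For the three norm estimates on $\Bn(l)$, I would expand $\Bn(l)=\sumM l_m S_n B_m S_n$, apply the triangle inequality, and then estimate each summand by submultiplicativity of the operator norm. For the $\Lzwei$-bound, use $\norm{S_n}_{\LinearOperators{H}}\le 1$ from \eqref{SnUniformlyBounded}, so $\norm{S_n B_m S_n}_{\LinearOperators{H}}\le \norm{B_m}_{\LinearOperators{H}}$, and then $\sumM \vert l_m\vert \norm{B_m}_{\LinearOperators{H}}\le \vert l\vert\,(\sumM\norm{B_m}_{\LinearOperators{H}}^2)^{1/2}=\vert l\vert\, b_\Lzwei^{1/2}$ by Cauchy--Schwarz. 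The $\EA$-bound is identical, using $\norm{S_n}_{\LinearOperators{\EA}}\le 1$ and the definition of $b_\EA$; the $\LalphaPlusEins$-bound is the same computation except that each $S_n$ now contributes a factor $\sup_{n\in\N}\norm{S_n}_{\LinearOperators{L^{\alpha+1}}}$, giving the square of that supremum after the two occurrences of $S_n$.

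Next, for the group property: since each $B_m$ is self-adjoint and bounded on $H$, and $S_n$ is self-adjoint and bounded on $H$, the finite-dimensional operator $S_n B_m S_n$ is bounded and self-adjoint on $H$, hence so is $\Bn(l)$ for real $l$. Therefore $-\im\,\Bn(l)$ generates a strongly continuous group $(\groupBnT)_{t\in\R}$ on $H$, and self-adjointness of $\Bn(l)$ makes each $\groupBnT$ unitary on $H$ (this is just Stone's theorem in the bounded case, or directly: $\frac{d}{dt}\norm{\groupBnT x}_H^2 = 2\Real(-\im\Bn(l)\groupBnT x,\groupBnT x)_H = 0$ since $(\im\Bn(l)y,y)_H$ is purely imaginary for self-adjoint $\Bn(l)$).

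For the $\EA$- and $\LalphaPlusEins$-bounds on the group, the cleanest route is to note that $v(t):=\groupBnT x$ solves $v'(t)=-\im\Bn(l)v(t)$, so for any Banach space $X\in\{\EA,\LalphaPlusEins\}$ on which $\Bn(l)$ acts boundedly, $\frac{d}{dt}\norm{v(t)}_X \le \norm{\Bn(l)}_{\LinearOperators{X}}\norm{v(t)}_X$ (using the one-sided derivative of the norm, or a standard Gronwall argument applied to the $X$-valued ODE, which is legitimate because the range is finite-dimensional so all these norms are on a fixed finite-dimensional space and everything is smooth in $t$). Gronwall's inequality then yields $\norm{v(t)}_X \le e^{\vert t\vert\,\norm{\Bn(l)}_{\LinearOperators{X}}}\norm{x}_X$, and inserting the norm bounds just proved gives the two displayed estimates with the exponents $\vert t\vert\,\vert l\vert b_\EA^{1/2}$ and $\vert t\vert\,\vert l\vert b_{\alpha+1}^{1/2}\sup_{n\in\N}\norm{S_n}_{\LinearOperators{L^{\alpha+1}}}^2$ respectively. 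One should also remark that $\groupBnT$ restricted to $H_n$ is well-defined on all of $\EA$ and $\LalphaPlusEins$ because the range of $S_n$ (hence the relevant invariant subspace $H_n$) is finite-dimensional and contained in $\EA\cap\LalphaPlusEins$; since $H_n\subset\EA\cap L^{\alpha+1}$ and $\groupBnT$ leaves $H_n$ invariant, the bounds make sense.

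The only mildly delicate point — the ``main obstacle'' such as it is — is making the Gronwall argument in $\EA$ and $\LalphaPlusEins$ rigorous, since $t\mapsto\norm{v(t)}_X$ need not be differentiable. This is handled by the standard trick of working with $\norm{v(t)}_X$ via $\frac{d^+}{dt}\norm{v(t)}_X\le\norm{v'(t)}_X=\norm{\Bn(l)v(t)}_X\le\norm{\Bn(l)}_{\LinearOperators{X}}\norm{v(t)}_X$ (the left Dini derivative estimate follows from $\norm{v(t+h)}_X\le\norm{v(t)}_X+\norm{v(t+h)-v(t)}_X$ and differentiability of $v$ as an $X$-valued function — which holds here since $v$ takes values in the finite-dimensional space $H_n$, where the $X$-norm and $H$-norm are equivalent and $v$ is smooth). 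Everything else is a routine application of Cauchy--Schwarz and submultiplicativity.
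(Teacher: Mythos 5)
Your proposal is correct and follows essentially the paper's own proof: the three operator-norm bounds via the triangle inequality, submultiplicativity, Cauchy--Schwarz and the uniform bounds $\norm{S_n}_{\LinearOperators{\Lzwei}}\le 1$, $\norm{S_n}_{\LinearOperators{\EA}}\le 1$, $\sup_n\norm{S_n}_{\LinearOperators{L^{\alpha+1}}}<\infty$; unitarity on $\Lzwei$ from self-adjointness of $\Bn(l)$ (Stone); and the group estimates on $\EA$ and $\LalphaPlusEins$ from $\norm{\groupBnT}_{\LinearOperators{X}}\le e^{\vert t\vert\norm{\Bn(l)}_{\LinearOperators{X}}}$, which the paper invokes directly and you re-derive by a Gronwall argument. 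The only quibble is your finite-dimensionality justification of smoothness: for $x\notin H_n$ the trajectory $\groupBnT x$ lies in $x+H_n$ rather than in $H_n$, but this is harmless, since the boundedness of $\Bn(l)$ on $X\in\{\EA,\LalphaPlusEins\}$ that you have already established makes the exponential series converge in $\LinearOperators{X}$, giving smoothness of $t\mapsto\groupBnT x$ and the stated bound on all of $X$.
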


\begin{proof}
	By the boundedness of $\left(S_n\right)_{n\in\N}\in \LinearOperators{L^{\alpha+1}}^\N,$ we deduce that
	\begin{align}\label{BnLpEstimate}
	\norm{\Bn(l)}_{\LinearOperators{L^{\alpha+1}}}&\le \sumM \vert l_m\vert \norm{S_n B_m S_n }_\LinearOperators{L^{\alpha+1}}\le \vert l\vert \left(\sumM \norm{B_m}_\LinearOperators{L^{\alpha+1}}^2\right)^\frac{1}{2} \sup_{n\in\N} \norm{S_n}_\LinearOperators{L^{\alpha+1}}^2\nonumber\\
	&= \vert l\vert b_{{\alpha+1}}^\frac{1}{2} \sup_{n\in\N} \norm{S_n}_\LinearOperators{L^{\alpha+1}}^2.
	\end{align}	
	The estimates of $\Bn(l)$ in spaces  $\Lzwei$ and $\EA$ can be shown analogously using $\norm{S_n}_\LinearOperators{\Lzwei}=1$ and $\norm{S_n}_\LinearOperators{\EA}=1.$ Since $S_n$ and $B_m$ are self-adjoint  on $\Lzwei$ for $n\in \N$ and $m\in \left\{1,\dots, M\right\},$ the Stone Theorem yields that $\left(\groupBnT\right)_{t\in\R}$ is a unitary group  on $\Lzwei.$ Moreover,
	\begin{align*}
	\norm{ \groupBnT x}_\EA \le e^{\vert t\vert \norm{ \Bn(l) }_{\mathcal{L}(\EA)} }\norm{x}_\EA
	\le e^{\vert t\vert\vert l\vert b_\EA^\frac{1}{2} }\norm{x}_\EA,\qquad x\in \EA,\quad t\in \R,
	\end{align*}
	\begin{align*}
	\norm{\groupBnT x}_{L^{\alpha+1}}
	&\le e^{\vert t\vert\norm{ \Bn(l) }_{\mathcal{L}(L^{\alpha+1})} }\norm{x}_{L^{\alpha+1}}\nonumber\\&\le e^{\vert t\vert\vert l\vert b_{\alpha+1}^\frac{1}{2} \sup_{n\in\N} \norm{S_n}_\LinearOperators{L^{\alpha+1}}^2}\norm{x}_{L^{\alpha+1}},\qquad x\in \LalphaPlusEins,\quad t\in \R.
	\end{align*}	
\end{proof}

In the next Lemma inspired by Lemma 2.2 in \cite{Brz+Manna_2017_SLLGELevy} \dela{\coma{TODO: Reference to Brzezniak Manna}}, we show how to control the differences in \eqref{marcusGalerkin} in the $H$-norm.

\begin{lemma}\label{AldousDifferences} For every $n\in\N,$ $l\in B$ and $x\in \Lzwei,$ the following inequalities hold:
	\begin{align*}
	\norm{\groupBn x-x}_\Lzwei\le b_\Lzwei^\frac{1}{2}\vert l\vert \norm{x}_\Lzwei,
	\end{align*}
	\begin{align*}
	\norm{\groupBn x-x+\im \Bn(l) x}_\Lzwei\le \frac{1}{2} b_\Lzwei\vert l\vert^2 \norm{x}_\Lzwei.
	\end{align*}
\end{lemma}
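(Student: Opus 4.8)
The plan is to derive both inequalities from the functional-calculus representation $\groupBn = e^{-\im \Bn(l)}$ together with the operator-norm bound $\norm{\Bn(l)}_{\LinearOperators{H}}\le \vert l\vert b_H^{1/2}$ from Lemma \ref{BnProperties}, using elementary estimates on the power series of the exponential. Concretely, for the first inequality I would write
\begin{align*}
\groupBn x - x = \left(e^{-\im \Bn(l)} - \Id\right)x = \int_0^1 \frac{\df}{\df\sigma}\left(e^{-\im\sigma\Bn(l)}x\right)\,\df\sigma = -\im\int_0^1 e^{-\im\sigma\Bn(l)}\Bn(l)x\,\df\sigma,
\end{align*}
and then take $H$-norms: since $\left(e^{-\im\sigma\Bn(l)}\right)_{\sigma\in\R}$ is a unitary group on $H$ (again Lemma \ref{BnProperties}, via Stone's theorem), each factor $e^{-\im\sigma\Bn(l)}$ has norm $1$, so $\norm{\groupBn x - x}_H \le \int_0^1 \norm{\Bn(l)x}_H\,\df\sigma \le \norm{\Bn(l)}_{\LinearOperators{H}}\norm{x}_H \le b_H^{1/2}\vert l\vert\,\norm{x}_H$. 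An alternative is to expand $e^{-\im\Bn(l)}-\Id = \sum_{k\ge 1}\frac{(-\im\Bn(l))^k}{k!}$ and bound the tail by $\sum_{k\ge 1}\frac{\norm{\Bn(l)}^k}{k!} = e^{\norm{\Bn(l)}}-1$; but this gives the slightly worse constant $e^{b_H^{1/2}\vert l\vert}-1$ rather than the clean $b_H^{1/2}\vert l\vert$, so the integral/Taylor-with-remainder route is preferable for matching the stated constant.

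For the second inequality I would use the same idea but Taylor-expand to one higher order, i.e.
\begin{align*}
\groupBn x - x + \im\Bn(l)x = \left(e^{-\im\Bn(l)} - \Id + \im\Bn(l)\right)x = \int_0^1 (1-\sigma)\left(-\im\Bn(l)\right)^2 e^{-\im\sigma\Bn(l)}x\,\df\sigma,
\end{align*}
which follows from the second-order Taylor formula with integral remainder applied to $\sigma\mapsto e^{-\im\sigma\Bn(l)}x$ (the zeroth- and first-order terms at $\sigma=0$ are exactly $x$ and $-\im\Bn(l)x$). Taking $H$-norms and using $\norm{(-\im\Bn(l))^2 e^{-\im\sigma\Bn(l)}x}_H \le \norm{\Bn(l)}_{\LinearOperators{H}}^2\norm{x}_H$ together with $\int_0^1(1-\sigma)\,\df\sigma = \tfrac12$ gives $\norm{\groupBn x - x + \im\Bn(l)x}_H \le \tfrac12\norm{\Bn(l)}_{\LinearOperators{H}}^2\norm{x}_H \le \tfrac12 b_H\vert l\vert^2\norm{x}_H$, which is exactly the claim.

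The only mildly delicate point — and the one I would state carefully rather than the calculation itself — is the justification that $\sigma\mapsto e^{-\im\sigma\Bn(l)}x$ is (twice) continuously differentiable in $H$ with derivative $-\im\Bn(l)e^{-\im\sigma\Bn(l)}x$. This is immediate here because $\Bn(l)\in\LinearOperators{H}$ is a \emph{bounded} operator, so the exponential is the norm-convergent power series, differentiation term by term is legitimate, and the remainder formulas above hold in the Banach space $H$. So there is no real obstacle; the proof is a two-line application of Taylor's theorem with integral remainder in a Banach space, with the constants $b_H^{1/2}\vert l\vert$ and $\tfrac12 b_H\vert l\vert^2$ coming directly from the operator-norm bound on $\Bn(l)$ established in Lemma \ref{BnProperties}. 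One could equally argue purely from the scalar power series bound $\vert e^{\im t}-1\vert\le\vert t\vert$ and $\vert e^{\im t}-1-\im t\vert\le\tfrac12 t^2$ transported to the operator setting, but since $\Bn(l)$ need not have a convenient spectral decomposition in $H$ one should phrase it via the series/Taylor remainder rather than the functional calculus.
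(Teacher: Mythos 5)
Your proof is correct and follows essentially the same route as the paper: a first-order fundamental-theorem-of-calculus identity for the first bound and a second-order Taylor remainder for the second, combined with the operator-norm bound on $\Bn(l)$ and the unitarity of $\groupBnT$ from Lemma \ref{BnProperties}. The only cosmetic difference is that the paper writes the second-order remainder as the iterated integral $\int_0^1\int_0^s \frac{\df^2}{\df t^2}\groupBnT x\,\df t\,\df s$ rather than your $\int_0^1(1-\sigma)(\cdot)\,\df\sigma$, which are the same after exchanging the order of integration.
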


\begin{proof}
	The identities
	\begin{align*}
	\groupBn x-x= \int_0^1 \frac{\df}{\df t} \groupBnT x \df t= -\im \Bn(l)\int_0^1 \groupBnT x \df t
	\end{align*}
	and
	\begin{align*}
	\groupBn x-x+\im \Bn(l)x=\int_0^1 \int_0^s \frac{\df^2}{\df t^2} \groupBnT x \df t \df s=-\Bn(l)^2 \int_0^1 \int_0^s  \groupBnT x \df t \df s
	\end{align*}
	and Lemma \ref{BnProperties} lead to
	\begin{align*}
	\norm{\groupBn x-x}_\Lzwei\le \norm{\Bn(l)}_\LinearOperators{\Lzwei} \int_0^1 \norm{\groupBnT x}_\Lzwei \df t\le b_\Lzwei^\frac{1}{2} \vert l\vert \norm{x}_\Lzwei,
	\end{align*}
	\begin{align*}
	\norm{\groupBn x-x+\im \Bn(l)x}_\Lzwei\le \norm{\Bn(l)}_\LinearOperators{\Lzwei}^2 \int_0^1 \int_0^s  \norm{\groupBnT x}_\Lzwei \df t \df s\le \frac{1}{2}b_\Lzwei\vert l\vert^2 \norm{x}_\Lzwei.
	\end{align*}
\end{proof}


Next, we prove the well-posedness of the Galerkin equation. Moreover, we show that the Marcus noise and the approximation do not destroy the mass conservation which is  well-known for the deterministic nonlinear Schr\"odinger equation.

\begin{Prop}\label{massConservationGalerkin}
	For each $n\in\N,$ there is a unique global strong solution $u_n\in \D([0,T],H_n)$ of \eqref{marcusGalerkin} and we have the equality 
	\begin{align}\label{massConservationEstimate}
		\norm{u_n(t)}_H=\norm{\widetilde{u_{0,n}}}_H = \norm{u_0}_H
	\end{align}
	almost surely for all $t\in [0,T].$
\end{Prop}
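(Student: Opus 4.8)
The strategy is a standard fixed-dimensional SDE argument adapted to the Poisson-driven setting, plus an Itô-type computation (in Marcus form) for the mass. Since $H_n$ is finite-dimensional, equation \eqref{marcusGalerkin} is an SDE in $\R^{\dim H_n}$ driven by the compensated Poisson random measure $\tilde\eta$ restricted to $B = \{|l|\le 1\}$, which has finite second moment $\int_B |l|^2\,\nu(\df l)<\infty$ by \eqref{LevyMeasure}. First I would check that all coefficients are well-defined and locally Lipschitz on $H_n$: the drift $x\mapsto -\im(Ax + P_n F(x))$ is Lipschitz on bounded sets of $H_n$ because $A$ is bounded on the finite-dimensional space $H_n$, $P_n$ is bounded, and $F$ is Lipschitz on bounded subsets of $\LalphaPlusEins\supset$ (the image of) $H_n$ by \eqref{nonlinearityLocallyLipschitz}; the small-jump and compensator integrands $x\mapsto (\groupBn-\Id)x$ and $x\mapsto (\groupBn - \Id + \im\Bn(l))x$ are linear in $x$ with operator norms controlled, via Lemma \ref{AldousDifferences}, by $b_\Lzwei^{1/2}|l|$ and $\tfrac12 b_\Lzwei|l|^2$ respectively, so that after squaring and integrating against $\nu$ they give finite Lipschitz constants. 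This yields a unique local strong solution $u_n$ by the standard contraction/Picard argument for SDEs with jumps (e.g. as in \cite{Applebaum_2009}, \cite{Kunita}).

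The main point — and the step I expect to require the most care — is the mass identity \eqref{massConservationEstimate}, which is precisely what forces global existence (no blow-up, since $\|u_n(t)\|_H$ stays bounded). I would apply the Itô formula for the function $x\mapsto \|x\|_H^2$ to the semimartingale $u_n$. The continuous-drift part contributes $2\Real\skpH{u_n(s)}{-\im(Au_n(s)+P_n F(u_n(s)))}$; here $2\Real\skpH{u_n}{-\im A u_n}=0$ since $A$ is self-adjoint, and $2\Real\skpH{u_n}{-\im P_n F(u_n)} = 2\Real\skpH{P_n u_n}{-\im F(u_n)} = 2\Real\duality{-\im u_n}{F(u_n)} = 0$ using $P_n u_n = u_n$, \eqref{PnInEAdual}, and \eqref{nonlinearityComplex}. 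The jump part of Itô's formula produces, for each jump of size $l$ at time $s$, the increment $\|\groupBn u_n(s-)\|_H^2 - \|u_n(s-)\|_H^2$, which vanishes because $\groupBn$ is a \emph{unitary} operator on $H$ (Lemma \ref{BnProperties}, via Stone's theorem, since $S_n$ and $B_m$ are self-adjoint on $H$) — this is exactly the norm-preservation property the Marcus form was designed to guarantee. Finally one checks that the compensator integral together with the $\tilde\eta$-stochastic integral of this identically-zero jump functional also cancel: writing the $\tilde\eta$-integral plus compensator as an $\eta$-integral of $\|\groupBn u_n(s-)\|_H^2-\|u_n(s-)\|_H^2 \equiv 0$ (the contributions of $\|\groupBn u_n - u_n + \im\Bn(l)u_n\|_H^2$-type and cross terms from the compensator must be tracked and shown to telescope to zero using unitarity again), we get $\df\|u_n(t)\|_H^2 = 0$, hence $\|u_n(t)\|_H = \|\widetilde{u_{0,n}}\|_H$ a.s. for all $t$.

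The equality $\|\widetilde{u_{0,n}}\|_H = \|u_0\|_H$ is immediate from the definition \eqref{eqn-approx initial data}: if $S_n u_0\neq 0$ the normalization factor $\|u_0\|_H/\|S_n u_0\|_H$ makes $\|\widetilde{u_{0,n}}\|_H = \|S_n u_0\|_H \cdot \|u_0\|_H/\|S_n u_0\|_H = \|u_0\|_H$, and for $u_0\neq 0$ one has $S_n u_0\neq 0$ for $n$ large by \eqref{initialValueConvergence}, while the case $u_0=0$ is trivial. Combining the local solution with the a priori bound $\|u_n(t)\|_H = \|u_0\|_H$ on $[0,\tau)$ for the maximal existence time $\tau$ rules out explosion and extends the solution to all of $[0,T]$, giving the unique global strong solution $u_n\in\D([0,T],H_n)$. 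The delicate bookkeeping will be the jump/compensator cancellation in the Itô computation, for which I would either invoke a Marcus-form Itô/change-of-variables formula (as in \cite{Applebaum_2009}, Section 6.10, or \cite{Kunita}) directly, or do it by hand splitting $\eta$-integrand $g(s,l,u_n(s-)) := \|\groupBn u_n(s-)\|_H^2 - \|u_n(s-)\|_H^2$ and noting $g\equiv 0$.
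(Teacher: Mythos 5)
Your proposal is correct, and the heart of it -- the mass identity via an It\^o computation, with the drift terms killed by the self-adjointness of $A$, by $P_nu_n=u_n$ together with \eqref{nonlinearityComplex}, and the jump terms killed by the unitarity of $\groupBn$ (Stone's theorem, since $S_n\B(l)S_n$ is self-adjoint) -- is exactly the computation in the paper, which invokes the Marcus It\^o formula of Theorem \ref{T2} so that the jump/compensator bookkeeping you flag as delicate is already packaged: the $\tilde{\eta}$- and $\nu$-integrands are literally $\norm{\groupBn u_n}_\Lzwei^2-\norm{u_n}_\Lzwei^2=0$ plus a term $\Real\skpH{u_n}{\im S_nB_mS_nu_n}=0$, so no cross terms survive. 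Where you genuinely diverge is the existence half: you build a local solution by Picard iteration and then promote it to a global one using the mass identity as an a priori bound, whereas the paper verifies local Lipschitz estimates \eqref{Lipschitz1}--\eqref{Lipschitz2} together with a one-sided linear growth condition \eqref{LinearGrowth} (which already exploits the same cancellation $\Real\skpH{u}{-\im F(u)}=0$) and obtains the unique \emph{global} strong solution in one stroke from Theorem 3.1 of \cite{AlbeverioBrzezniakPoissonODE}, proving conservation of mass only afterwards. Both routes work; the paper's outsources the no-explosion issue to the cited theorem, while yours is more self-contained but requires you to actually carry out the maximal-time/stopping-time argument (It\^o up to the exit time of a ball, plus the observation that jumps preserve the $H$-norm so the exit time is $T$), which you only sketch. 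Note also that, as in the paper, the equality $\norm{\widetilde{u_{0,n}}}_H=\norm{u_0}_H$ really requires $S_nu_0\neq 0$ (or $u_0=0$); your handling of that degenerate case matches the paper's.
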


\begin{proof}
\emph{Step 1.} We fix $n\in\N.$ To obtain a global solution, we regard $H_n$ as a finite dimensional real Hilbert space equipped with the scalar product
$\skpHn{u}{v}:=\Real \skpH{u}{v}$ and check the assumptions of  \cite{AlbeverioBrzezniakPoissonODE}, Theorem 3.1 for the coefficients defined by
\begin{align*}
	\xi&=\widetilde{u_{0,n}},\qquad \sigma(u)=0, \\
	 b(u)&=-\im A u-\im P_n F(u)+ \int_{\left\{\vert l \vert\le 1\right\}} \! \left\{ \groupBn u - u + \im \Bn(l) u\right\}\, \nu(\df l),\\
	g(u,l)&=\!\left[\groupBn u - u\right]
\end{align*}	
for $u\in H_n$ and $l\in B.$
Let $R>0$. We take $u,v\in H_n$ such that $\norm{u}_H,\norm{v}_H\le R$ and estimate
\begin{align}\label{GalerkinLipschitzStart}
	\norm{b(u)-b(v)}_H\le& \norm{A|_{H_n}}_\LinearOperators{H} \norm{u-v}_H+\norm{F(u)-F(v)}_H
	\nonumber\\&+\int_{\left\{\vert l \vert\le 1\right\}} \! \norm{ \groupBn (u-v) - (u-v) + \im \Bn(l) (u-v)}_H\, \nu(\df l).
\end{align}
By Lemma \ref{AldousDifferences} and \eqref{LevyMeasure}
\begin{align}\label{GalerkinNoise}
	\int_{\left\{\vert l \vert\le 1\right\}} \! \norm{ \groupBn (u-v) - (u-v) + \im \Bn(l) (u-v)}_H\, \nu(\df l)&\le \frac{1}{2} b_\Lzwei\int_{\left\{\vert l \vert\le 1\right\}} \vert l\vert^2 \nu(\df l)\norm{u-v}_\Lzwei\nonumber\\
	&\lesssim \norm{u-v}_\Lzwei.
\end{align}
To estimate the nonlinearity, we use the equivalence of all norms in $H_n$ and \eqref{nonlinearityLocallyLipschitz} to get 
\begin{align}\label{GalerkinNonlinearity}
	\norm{P_n F(u)-P_n F(v)}_H&\lesssim_n \norm{P_n F(u)-P_n F(v)}_{\EAdual}\lesssim \norm{ F(u)- F(v)}_{L^{\frac{\alpha+1}{\alpha}}}\nonumber\\
	&
	\lesssim \left(\norm{u}_\LalphaPlusEins+\norm{v}_\LalphaPlusEins\right)^{\alpha-1} \norm{u-v}_\LalphaPlusEins\nonumber\\
	&\lesssim \left(\norm{u}_H+\norm{v}_H\right)^{\alpha-1} \norm{u-v}_H\lesssim_R \norm{u-v}_H.
\end{align}
We insert \eqref{GalerkinNonlinearity} and 	\eqref{GalerkinNoise} in \eqref{GalerkinLipschitzStart} to get a constant $C=C(R)$ such that
\begin{align}\label{Lipschitz1}
	\norm{b(u)-b(v)}_H\le C \norm{u-v}_H.
\end{align}
Moreover, we have
\begin{align}\label{Lipschitz2}
	\int_{\left\{\vert l \vert\le 1\right\}} \norm{g(u,l)-g(v,l)}_H^2 \nu(\df l)\le b_\Lzwei\int_{\left\{\vert l \vert\le 1\right\}} \vert l\vert^2 \nu(\df l) \norm{u-v}_\Lzwei^2\lesssim \norm{u-v}_\Lzwei^2
\end{align}
where we used Lemma \ref{AldousDifferences} and \eqref{LevyMeasure}. To check the one-sided linear growth condition, we use \eqref{nonlinearityComplex} and \eqref{GalerkinNoise} for $v=0$ and obtain a constant $K_1>0$ with
\begin{align}\label{LinearGrowth}
	2\skpHn{u}{b(u)}+\int_{\left\{\vert l \vert\le 1\right\}} \norm{g(u,l)}_H^2 \nu(\df l)\le& 2 \norm{A|_{H_n}}_\LinearOperators{H} \norm{u}_H^2+2 \Real \skpH{u}{-\im F(u)}\nonumber\\&+2 \norm{u}_H \int_{\left\{\vert l \vert\le 1\right\}} \! \norm{ \groupBn u - u + \im \Bn(l) u}_H\, \nu(\df l)\nonumber\\
	\le& K_1 \norm{u}_H^2.
\end{align}
In view of \eqref{Lipschitz1}, \eqref{Lipschitz2} and \eqref{LinearGrowth}, we can apply Theorem 3.1 of \cite{AlbeverioBrzezniakPoissonODE} and get a unique global strong solution of \eqref{marcusGalerkin} for each $n\in \N.$\\

\emph{Step 2.} It remains to show \eqref{massConservationEstimate}.	
The function $\mathcal{M}: H_n \to \R$  defined by  $\mathcal{M}(v):=\norm{v}_\Lzwei^2$ for $v\in H_n$ is   continuously Fr\'{e}chet-differentiable with
\begin{align*}
\mathcal{M}^\prime[v]h_1&= 2 \Real \skpLzwei{v}{ h_1}, \qquad
\end{align*}
for $v, h_1, h_2\in H_n.$ By the It\^o  formula and \eqref{marcus2}, we get almost surely,  for all $t\in [0,T]$,
\begin{align*}
\norm{u_n(t)}_\Lzwei^2=&\norm{\widetilde{u_{0,n}}}_\Lzwei^2+2 \int_0^t \Real \skpLzwei{ u_n(s)}{ -\im A u_n(s)-\im P_n F\left(u_n(s)\right)}\df s
\\&
+\int_0^t \int_{\left\{\vert l \vert\le 1\right\}} \left[\norm{\groupBn u_n(s-)}_\Lzwei^2-\norm{u_n(s-)}_\Lzwei^2\right]\tilde{\eta}(\df l,\df s)\\&
+\int_0^t \int_{\left\{\vert l \vert\le 1\right\}} \left[\norm{\groupBn u_n(s)}_\Lzwei^2-\norm{u_n(s)}_\Lzwei^2  \right]\nu(\df l)\df s\\
&-2 \int_0^t \int_{\left\{\vert l \vert\le 1\right\}} \Real \skpLzwei{u_n(s)}{-\im \sumM l_m S_n B_m S_n u_n(s)} \nu(\df l)\df s.
\end{align*}
 By
\begin{align*}
\Real \skpLzwei{ v}{ -\im A v}&=\Real \left[\im \Vert \sqrtA v\Vert_\Lzwei^2\right]=0,\qquad
\Real \skpLzwei{ v}{ -\im P_n F\left( v\right)}=0, \qquad
\Real \skpLzwei{v}{\im B_m v}=0
\end{align*}
for $v\in H_n$ and the fact that $S_n \B(l)S_n $ is self-adjoint and hence, $\groupBn$ unitary, this simplifies to
\begin{align*}
\norm{u_n(t)}_\Lzwei^2=&\norm{\widetilde{u_{0,n}}}_\Lzwei^2=\norm{u_0}_\Lzwei^2
\end{align*}
almost surely for all $t\in [0,T].$
\end{proof}

Recall that by Assumption $\ref{nonlinearAssumptions},$ the nonlinearity $F$ has a real antiderivative denoted by $\Fhat.$
The second ingredient for uniform estimates in $\EA$ is to control the energy associated to the NLS.

\begin{definition}
	We define the energy $\energy$ \dela{of $u\in \EA$} function  by
	\begin{align*}
	\energy(u):= \frac{1}{2} \Vert A^{\frac{1}{2}} u \Vert_{H}^2+\Fhat(u),\qquad u\in \EA.
	\end{align*}	
\end{definition}
Note that $\energy(u)$ is well  defined for every $u\in \EA$ by the continuity of the  embedding $\EA \hookrightarrow L^{\alpha+1}(M).$ The compactness of this embedding formulated in Assumption \ref{spaceAssumptions} is not needed here.
Before we estimate the energy of the solutions $u_n$ of \eqref{marcusGalerkin}, we need some preparations.

\begin{lemma}\label{lem-EstimateItoDifferencesEnergy}
	\begin{trivlist}
		\item[\;a)] There is a constant $C=C(b_\EA,b_{\alpha+1},\alpha,F)>0$ such that for every $n\in\N,$ we have
		\begin{align*}
		\vert \energy(\groupBn x)-\energy(x)\vert \le &C \vert l\vert  \left(\norm{x}_\EA^2+\norm{x}_{L^{\alpha+1}}^{\alpha+1}\right)
		\end{align*}
		for all $x\in H_n,$ and $l\in \R^N$ with $\vert l\vert\le 1.$
		\item[\;b)]There is a constant $C=C(b_\EA,b_{\alpha+1},q, \alpha,F)>0$ such that for every $n\in\N,$ we have
		\begin{align*}
		\vert \energy(\groupBn x)-\energy(x)+\energy'[x](\im \Bn(l) x)\vert \le &C \vert l\vert^2  \left(\norm{x}_\EA^2+\norm{x}_{L^{\alpha+1}}^{\alpha+1}\right)
		\end{align*}
		for all $x\in H_n,$ and $l\in \R^N$ with $\vert l\vert\le 1.$
	\end{trivlist}
\end{lemma}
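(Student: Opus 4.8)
The plan is to prove both parts by expressing the increments of $\energy$ along the unitary group $\left(\groupBnT\right)_{t\in\R}$ via the fundamental theorem of calculus, exactly as in Lemma \ref{AldousDifferences}, and then to estimate the resulting integrands using the explicit form of $\energy'$ together with the uniform bounds on $\Bn(l)$ from Lemma \ref{BnProperties} and the nonlinearity bounds \eqref{nonlinearityEstimate}, \eqref{deriveNonlinearBound}. First I would record the derivative of $\energy$: since $\energy(u)=\frac12\norm{A^{\frac12}u}_H^2+\Fhat(u)$, for $u,h\in H_n\subset\EA$ we have
\begin{align*}
\energy'[u]h=\Real\skpH{A^{\frac12}u}{A^{\frac12}h}+\Real\duality{F(u)}{h}=\Real\duality{Au+F(u)}{h},
\end{align*}
using \eqref{antiderivative}. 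Combining this with \eqref{nonlinearityEstimate} and the continuity $\EA\hookrightarrow\LalphaPlusEins$, one gets the pointwise bound $\vert\energy'[u]h\vert\lesssim(\norm{u}_\EA+\norm{u}_{L^{\alpha+1}}^{\alpha})\norm{h}_\EA$, which will be the workhorse estimate.

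For part a), I would write, for $x\in H_n$,
\begin{align*}
\energy(\groupBn x)-\energy(x)=\int_0^1\frac{\df}{\df t}\energy(\groupBnT x)\,\df t=\int_0^1\energy'[\groupBnT x](-\im\Bn(l)\groupBnT x)\,\df t,
\end{align*}
where differentiating $t\mapsto\energy(\groupBnT x)$ under the integral is justified because $\energy$ is $C^1$ on $\EA$ and $t\mapsto\groupBnT x$ is $C^1$ in $\EA$ (as $\Bn(l)\in\LinearOperators{\EA}$ and the group is strongly continuous there, cf. Lemma \ref{BnProperties}). Then I would bound the integrand: using the workhorse estimate with $u=h=\groupBnT x$ and $-\im\Bn(l)$ in place of a generic direction, together with $\norm{\Bn(l)}_{\LinearOperators{\EA}}\le\vert l\vert b_\EA^{1/2}$, $\norm{\Bn(l)}_{\LinearOperators{L^{\alpha+1}}}\le\vert l\vert b_{\alpha+1}^{1/2}\sup_n\norm{S_n}_{\LinearOperators{L^{\alpha+1}}}^2$, and $\norm{\groupBnT x}_\EA\le e^{\vert l\vert b_\EA^{1/2}}\norm{x}_\EA\le e^{b_\EA^{1/2}}\norm{x}_\EA$, $\norm{\groupBnT x}_{L^{\alpha+1}}\lesssim\norm{x}_{L^{\alpha+1}}$ for $\vert l\vert\le1$. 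This yields $\vert\energy'[\groupBnT x](-\im\Bn(l)\groupBnT x)\vert\lesssim\vert l\vert(\norm{x}_\EA^2+\norm{x}_{L^{\alpha+1}}^{\alpha+1})$ uniformly in $t\in[0,1]$ and $n\in\N$, and integrating over $t$ gives the claim. The constant depends only on $b_\EA$, $b_{\alpha+1}$, $\alpha$, $\sup_n\norm{S_n}_{\LinearOperators{L^{\alpha+1}}}$ and the constants $C_{F,1}$, $C_{F,2}$ from Assumption \ref{nonlinearAssumptions}.

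For part b), I would use a second-order Taylor expansion: with $\psi(t):=\energy(\groupBnT x)$, we have $\psi(1)-\psi(0)-\psi'(0)=\int_0^1(1-t)\psi''(t)\,\df t$ (or the double-integral form used in Lemma \ref{AldousDifferences}), and $\psi'(0)=\energy'[x](-\im\Bn(l)x)$. Note $-\psi'(0)=\energy'[x](\im\Bn(l)x)$, matching the statement. The main work is estimating $\psi''(t)$; differentiating once more,
\begin{align*}
\psi''(t)=\energy''[\groupBnT x](-\im\Bn(l)\groupBnT x,-\im\Bn(l)\groupBnT x)+\energy'[\groupBnT x](-\Bn(l)^2\groupBnT x).
\end{align*}
The second term is handled as in part a) but with $\Bn(l)^2$, picking up $\vert l\vert^2$. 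The first term requires a bound on the second Fréchet derivative $\energy''$; here I would use that $\energy''[u](h,h)=\norm{A^{\frac12}h}_H^2+\Fhat''[u](h,h)=\norm{A^{\frac12}h}_H^2+\Real\duality{F'[u]h}{h}$, and estimate $\vert\Fhat''[u](h,h)\vert\le\norm{F'[u]}_{L^{\alpha+1}\to L^{(\alpha+1)/\alpha}}\norm{h}_{L^{\alpha+1}}^2\le C_{F,2}\norm{u}_{L^{\alpha+1}}^{\alpha-1}\norm{h}_{L^{\alpha+1}}^2$ by \eqref{deriveNonlinearBound}. Plugging in $u=h=\groupBnT x$ with $h$ replaced by $-\im\Bn(l)\groupBnT x$ in the arguments of $\energy''$ and using $\norm{\Bn(l)\cdot}_\EA\lesssim\vert l\vert\norm{\cdot}_\EA$, $\norm{\Bn(l)\cdot}_{L^{\alpha+1}}\lesssim\vert l\vert\norm{\cdot}_{L^{\alpha+1}}$ together with the group bounds gives $\vert\psi''(t)\vert\lesssim\vert l\vert^2(\norm{x}_\EA^2+\norm{x}_{L^{\alpha+1}}^{\alpha+1})$ uniformly in $t$ and $n$; integrating over $t\in[0,1]$ finishes the proof.

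The main obstacle I anticipate is purely bookkeeping: one must be careful that $\energy$ is genuinely twice Fréchet differentiable on $\EA$ with the claimed derivatives — this follows from Assumption \ref{nonlinearAssumptions}(ii)–(iii) (which give that $\Fhat$ is $C^1$ with $\Fhat'=\Real\duality{F(\cdot)}{\cdot}$ and $F$ is $C^1$), so that $\Fhat$ is $C^2$ with $\Fhat''[u](h,k)=\Real\duality{F'[u]h}{k}$ — and that all intermediate quantities $\groupBnT x$ stay in $H_n\subset\EA$ where the $L^{\alpha+1}$-norms are controlled uniformly in $n$ thanks to Proposition \ref{PaleyLittlewoodLemma} and Lemma \ref{BnProperties}. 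Once the differentiability is in place, the estimates are a routine application of the chain rule and the product-type bounds already assembled above.
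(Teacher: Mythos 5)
Your proposal follows essentially the same route as the paper's proof: the first- and second-order fundamental theorem of calculus along the group $\left(\groupBnT\right)_{t}$, the splitting of $\energy'$ and $\energy''$ into the kinetic part and the $F$-part, and the operator and group bounds of Lemma \ref{BnProperties} combined with \eqref{nonlinearityEstimate} and \eqref{deriveNonlinearBound}. One caution: your stated workhorse bound $\vert\energy'[u]h\vert\lesssim(\norm{u}_\EA+\norm{u}_{L^{\alpha+1}}^{\alpha})\norm{h}_\EA$, if used literally, produces the cross term $\norm{x}_\EA\norm{x}_{L^{\alpha+1}}^{\alpha}$, which cannot in general be absorbed into $\norm{x}_\EA^2+\norm{x}_{L^{\alpha+1}}^{\alpha+1}$; as in the paper (and as your invocation of the $L^{\alpha+1}$-bounds on $\Bn(l)$ and on the group indicates you intend), the $F$-term must be paired with $\norm{\Bn(l)\groupBnT x}_{L^{\alpha+1}}$ so that it closes with $\norm{x}_{L^{\alpha+1}}^{\alpha+1}$ alone, while only the kinetic term is estimated in $\EA$.
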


\begin{proof}
	\emph{ad a):} The map
	$\energy$ is twice continuously Fr\'{e}chet-differentiable with
	\begin{align*}
	\energy'[v]h=& \Real \duality{Av+F(v)}{ h},\\
	\energy''[v](h_1,h_2)=&\Real \skpLzwei{\sqrtA h_1}{\sqrtA h_2} +\Real \duality{F'[v]h_1}{h_2}
	\end{align*}
	for $v,h_1,h_2\in H_n.$ Let us fix $x\in H_n$ and  $l\in B$. Then, we get
	\begin{align}\label{ItoDifferencesStart}
	\energy(\groupBn x)-\energy(x)=&\int_0^1 \frac{\df }{\df t} \energy(\groupBnT x)\df t=\int_0^1 \energy'[\groupBnT x]\left(-\im \Bn(l) \groupBnT x\right)\df t\nonumber \\=&\int_0^1 \Real \dualityBig{A \groupBnT x+F(\groupBnT x)}{ -\im \Bn(l) \groupBnT x}			 \df t.
	\end{align}
	We define $f: [0,1]\times \R^N\to [0,\infty)$ by
	\begin{align*}
	f(t,l):=\max\left\{1,e^{2 t \vert l\vert b_\EA^\frac{1}{2} }+e^{(\alpha+1)t\vert l\vert b_{\alpha+1}^\frac{1}{2} \sup_{n\in\N} \norm{S_n}_\LinearOperators{L^{\alpha+1}}^2}\right\},\qquad t\in [0,1],\quad l\in \R^N,
	\end{align*}
	and by the properties of $\Bn(l)$ from Lemma \ref{BnProperties}, we estimate  the integrand of 	\eqref{ItoDifferencesStart}:		
	\begin{align}\label{ItoDifferencesAuxiliaryOne}
	\vert\skpLzwei{A \groupBnT x}{-\im   \Bn(l) \groupBnT x}\vert
	&\le \Vert \sqrtA \groupBnT x\Vert_{L^2} \Vert \sqrtA \Bn(l) \groupBnT x \Vert_{L^2}\nonumber\\
	&\le e^{t \vert l\vert b_\EA^\frac{1}{2} }\norm{x}_\EA \vert l\vert b_\EA^\frac{1}{2} \Vert   \groupBnT x\Vert_\EA\nonumber\\
	&\le e^{2 t \vert l\vert b_\EA^\frac{1}{2} }\vert l\vert b_\EA^\frac{1}{2}\norm{x}_\EA^2
	\end{align}
	and
	\begin{align}\label{ItoDifferencesAuxiliaryTwo}
	\left\vert  \dualityBig{F(\groupBnT x)}{ -\im \Bn(l) \groupBnT x}\right\vert
	&\le \Vert F(\groupBnT x)\Vert_{L^\frac{\alpha+1}{\alpha}} \Vert   \Bn(l) \groupBnT x \Vert_{L^{\alpha+1}}\nonumber\\
	&\le C_{F,1} \Vert   \Bn(l)\Vert_{\LinearOperators{L^{\alpha+1}}}  \Vert \groupBnT x\Vert_{L^{\alpha+1}}^{\alpha+1} \nonumber\\
	&\le C_{F,1} \vert l\vert b_{{\alpha+1}}^\frac{1}{2} \sup_{n\in\N} \norm{S_n}_\LinearOperators{L^{\alpha+1}}^2 \norm{x}_{L^{\alpha+1}}^{\alpha+1}\nonumber\\&\hspace{2cm}e^{(\alpha+1)t\vert l\vert b_{\alpha+1}^\frac{1}{2} \sup_{n\in\N} \norm{S_n}_\LinearOperators{L^{\alpha+1}}^2}.
	\end{align}
	
	We obtain
	\begin{align*}
	&\vert \energy(\groupBn x)-\energy(x)\vert \\&\le \vert l\vert \max\left\{b_\EA^\frac{1}{2},C_{F,1} b_{\alpha+1}^\frac{1}{2} \sup_{n\in\N}\norm{S_n}_\LinearOperators{L^{\alpha+1}}^2\right\} \left(\norm{x}_\EA^2+\norm{x}_{L^{\alpha+1}}^{\alpha+1}\right) \int_0^1 f(t,l)  \df t
	\end{align*}
	and the assertion follows from
	\begin{align}\label{EstimateIntegraloverF}
	\int_0^1 f(t,l)  \df t&= \int_0^1\max\left\{1,e^{2  t \vert l\vert b_\EA^\frac{1}{2} }+e^{(\alpha+1)t\vert l\vert b_{\alpha+1}^\frac{1}{2} \sup_{n\in\N} \norm{S_n}_\LinearOperators{L^{\alpha+1}}^2}\right\} \df t\nonumber\\
	&\le \max\left\{1,e^{2   b_\EA^\frac{1}{2} }+e^{(\alpha+1) b_{\alpha+1}^\frac{1}{2} \sup_{n\in\N} \norm{S_n}_\LinearOperators{L^{\alpha+1}}^2}\right\}<\infty,\qquad \vert l\vert \le 1.
	\end{align}

	\emph{ad b):} Let us fix $x\in H_n$ and  $l\in B$. We start with the identity
	\begin{align*}
	&\energy(\groupBn x)-\energy(x)+\energy'[x](\im \Bn(l)) x=\int_0^1 \left(\frac{\df }{\df s} \energy(\groupBnS x)-\frac{\df }{\df s} \energy(\groupBnS x)\bigg\vert _{s=0}\right)\df s\\
	&\hspace{3cm}=\int_0^1 \int_0^s\frac{\df^2 }{\df t^2} \energy(\groupBnT x)\df t \df s\\
	&\hspace{3cm}=\int_0^1  \int_0^s\energy'[\groupBnT x]\left(-\Bn(l)^2 \groupBnT x\right)\df t \df s\\
	&\hspace{3,3cm}+ \int_0^1  \int_0^s\energy''[\groupBnT x]\left(-\im\Bn(l) \groupBnT x,-\im\Bn(l) \groupBnT x\right)\df t \df s\\
	&\hspace{3cm}=:I_1+I_2.
	\end{align*}
	As above
	\begin{align*}
	\vert I_1\vert \le& \vert l\vert^2 \max\left\{ b_\EA,C_{F,1} b_{\alpha+1} \sup_{n\in\N}\norm{S_n}_\LinearOperators{L^{\alpha+1}}^4\right\} \left(\norm{x}_\EA^2+\norm{x}_{L^{\alpha+1}}^{\alpha+1}\right) \int_0^1 f(t,l)  \df t.
	\end{align*}
	We further decompose
	$I_2=I_{2,1}+I_{2,2}$
	with
	\begin{align*}
	I_{2,1}=   \int_0^1  \int_0^s   \norm{ \sqrtA \Bn(l) \groupBnT x}_{L^2}^2  \df t \df s,
	\end{align*}
	\begin{align*}
	I_{2,2}=   \int_0^1  \int_0^s    \Real\dualityBig{F'[\groupBnT x]\Bn(l) \groupBnT x}{\Bn(l) \groupBnT x} \df t \df s.
	\end{align*}
	By  Lemma \ref{BnProperties},
	\begin{align*}
	\vert I_{2,1}\vert \le&    \int_0^1  \int_0^s   \vert l\vert^2 b_\EA \norm{\groupBnT x}_\EA^2 \df t \df s
	\le  \norm{x}_\EA^2\vert l\vert^2 b_\EA   \int_0^1     f(t,l)\df t.
	\end{align*}
	Moreover, the estimate
	\begin{align*}
	&\left\vert\dualityBig{F'[\groupBnT x]\Bn(l) \groupBnT x}{\Bn(l) \groupBnT x}\right\vert\\
	&\hspace{3cm}\le \norm{F'[\groupBnT x]\Bn(l) \groupBnT x}_{L^\frac{\alpha+1}{\alpha}} \norm{\Bn(l) \groupBnT x}_{L^{\alpha+1}}\nonumber\\
	&\hspace{3cm}\le C_{F,2} \norm{\Bn(l)}_\LinearOperators{L^{\alpha+1}}^2\norm{\groupBnT x}_{L^{\alpha+1}}^{\alpha+1}\nonumber\\
	&\hspace{3cm}\le C_{F,2}\vert l\vert^2 b_{\alpha+1} \sup_{n\in\N}\norm{S_n}_\LinearOperators{L^{\alpha+1}}^4 f(t,l) \norm{x}_{L^{\alpha+1}}^{\alpha+1}
	\end{align*}
	yields
	\begin{align*}
	\vert I_{2,2}\vert \le&   \int_0^1  \int_0^s \left\vert\dualitybig{F'[\groupBnT x]\Bn(l) \groupBnT x}{\Bn(l) \groupBnT x}\right\vert \df t \df s\\
	\le& C_{F,2}\vert l\vert^2 b_{\alpha+1} \sup_{n\in\N}\norm{S_n}_\LinearOperators{L^{\alpha+1}}^4  \norm{x}_{L^{\alpha+1}}^{\alpha+1} \int_0^1 f(t,l)\df t
	\end{align*}
	and finally, we find a constant $C=C(b_{\alpha+1},b_{\EA},\sup_{n\in\N}\norm{S_n}_\LinearOperators{L^{\alpha+1}},F)$ such that
	\begin{align*}
	\left\vert\energy(\groupBn x)-\energy(x)+\energy'[x](\im \Bn(l) x)\right\vert\le C\vert l\vert^2\left(\norm{x}_\EA^2+\norm{x}_{L^{\alpha+1}}^{\alpha+1}\right)   \int_0^1   f(t,l)\df t
	\end{align*}
	and the second assertion also follows from \eqref{EstimateIntegraloverF}.
\end{proof}

The next observation will be useful to simplify the following arguments based on the  Gronwall Lemma to estimate of the energy. It has already appeared in \cite{ExistencePaper}, Lemma 5.6, but we need it in a slightly more general form.

\begin{lemma}\label{LemmaYOmegaNachLzweiZeit}
	Let $r\in [1,\infty),$ $q\in (1,\infty),$ $\varepsilon>0,$ $T>0$ and $X\in L^r(\Omega,L^\infty(0,\dela{t}{T})).$ Then,
	\begin{align*}
	\norm{X}_{L^r(\Omega,L^q(0,t))}\le \varepsilon \norm{X}_{L^r(\Omega,L^\infty(0,t))} +\varepsilon^{1-q} \frac{1}{q} \left(1-\frac{1}{q}\right)^{q-1}\int_0^t \norm{X}_{L^r(\Omega,L^\infty(0,s))} \df s,\qquad t\in[0,T].
	\end{align*}
\end{lemma}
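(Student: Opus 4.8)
The plan is to establish the inequality first pathwise, for a fixed $\omega\in\Omega$, and then take the $L^r(\Omega)$-norm. Fix $t\in[0,T]$ and $\omega$. Splitting $|X(s)|^q=|X(s)|^{q-1}\,|X(s)|$ and using $|X(s)|\le\norm{X}_{L^\infty(0,t)}$ for a.e. $s\in(0,t)$ gives $\int_0^t|X(s)|^q\,\df s\le\norm{X}_{L^\infty(0,t)}^{q-1}\,\norm{X}_{L^1(0,t)}$, that is, the interpolation bound
\[
\norm{X}_{L^q(0,t)}\le \norm{X}_{L^\infty(0,t)}^{1-\frac1q}\,\norm{X}_{L^1(0,t)}^{\frac1q}.
\]

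Next I would apply Young's inequality with conjugate exponents $\tfrac{q}{q-1}$ and $q$ and a free parameter $\delta>0$, namely $ab\le\tfrac{q-1}{q}\,\delta^{\frac{q}{q-1}}a^{\frac{q}{q-1}}+\tfrac1q\,\delta^{-q}b^q$, to the product $a=\norm{X}_{L^\infty(0,t)}^{1-\frac1q}$ and $b=\norm{X}_{L^1(0,t)}^{\frac1q}$, for which $a^{q/(q-1)}=\norm{X}_{L^\infty(0,t)}$ and $b^q=\norm{X}_{L^1(0,t)}$. Choosing $\delta$ so that $\delta^{\frac{q}{q-1}}=\tfrac{q}{q-1}\,\varepsilon$ makes the coefficient of $\norm{X}_{L^\infty(0,t)}$ equal to $\varepsilon$, and then $\tfrac1q\delta^{-q}=\tfrac1q\bigl(\delta^{q/(q-1)}\bigr)^{-(q-1)}=\varepsilon^{1-q}\,\tfrac1q\bigl(1-\tfrac1q\bigr)^{q-1}$, so that pathwise
\[
\norm{X}_{L^q(0,t)}\le\varepsilon\,\norm{X}_{L^\infty(0,t)}+\varepsilon^{1-q}\tfrac1q\Bigl(1-\tfrac1q\Bigr)^{q-1}\norm{X}_{L^1(0,t)}.
\]

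Taking the $L^r(\Omega)$-norm of both sides and using the triangle inequality in $L^r(\Omega)$ (valid since $r\ge1$, and all three terms are nonnegative), it remains to bound $\norm{X}_{L^r(\Omega,L^1(0,t))}$ by $\int_0^t\norm{X}_{L^r(\Omega,L^\infty(0,s))}\,\df s$. This is immediate from the pathwise estimate $\norm{X}_{L^1(0,t)}=\int_0^t|X(s)|\,\df s\le\int_0^t\norm{X}_{L^\infty(0,s)}\,\df s$ (the map $s\mapsto\norm{X}_{L^\infty(0,s)}$ is nondecreasing, hence measurable) together with Minkowski's integral inequality, which allows moving the $L^r(\Omega)$-norm inside the time integral. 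Substituting then yields the asserted estimate for every $t\in[0,T]$; the assumption $X\in L^r(\Omega,L^\infty(0,T))$ guarantees that all norms appearing are finite, so no term is vacuous.

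This is an entirely routine Young-type argument, so I do not expect a real obstacle; the only point requiring a little care is the exact bookkeeping of the constant — one must verify that the choice of $\delta$ making the coefficient of $\norm{X}_{L^\infty}$ equal to $\varepsilon$ turns the remaining coefficient into precisely $\varepsilon^{1-q}\tfrac1q(1-\tfrac1q)^{q-1}$, using $\delta^{-q}=\bigl(\delta^{q/(q-1)}\bigr)^{-(q-1)}$. (If one only needs the qualitative version ``for every $\varepsilon>0$ there is $C_\varepsilon>0$'', the optimization of $\delta$ can be omitted entirely.)
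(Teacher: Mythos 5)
Your argument is correct and is essentially the paper's own proof: the pathwise interpolation $\norm{X}_{L^q(0,t)}\le \norm{X}_{L^\infty(0,t)}^{1-\frac{1}{q}}\norm{X}_{L^1(0,t)}^{\frac{1}{q}}$, Young's inequality with a parameter tuned so that the coefficient of $\norm{X}_{L^\infty(0,t)}$ is $\varepsilon$ (your bookkeeping of the constant $\varepsilon^{1-q}\frac{1}{q}(1-\frac{1}{q})^{q-1}$ checks out), and then the $L^r(\Omega)$-norm together with Minkowski's integral inequality to replace $\norm{X}_{L^1(0,t)}$ by $\int_0^t\norm{X}_{L^r(\Omega,L^\infty(0,s))}\,\df s$. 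The only (immaterial) difference is that you bound $\int_0^t|X(s)|\,\df s\le\int_0^t\norm{X}_{L^\infty(0,s)}\,\df s$ pathwise before applying Minkowski, whereas the paper applies Minkowski first and then bounds $\norm{X(s)}_{L^r(\Omega)}\le\norm{X}_{L^r(\Omega,L^\infty(0,s))}$.
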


\begin{proof}
	As a consequence of Young's inequality, we obtain
	\begin{align}\label{YoungEpsilon}
	a^{1-\frac{1}{q}} b^\frac{1}{q}\le \varepsilon a+\varepsilon^{1-q} \frac{1}{q} \left(1-\frac{1}{q}\right)^{q-1}b,\qquad a,b\ge 0,\quad\varepsilon>0.
	\end{align}
	Then, interpolation of $L^q(0,t)$ between $L^\infty(0,t)$ and $L^1(0,t)$ and \eqref{YoungEpsilon} yield
	\begin{align*}
	\norm{X}_{L^q(0,t)}\le \norm{X}_{L^\infty(0,t)}^{1-\frac{1}{q}} \norm{X}_{L^1(0,t)}^\frac{1}{q}\le \varepsilon \norm{X}_{L^\infty(0,t)} +\varepsilon^{1-q} \frac{1}{q} \left(1-\frac{1}{q}\right)^{q-1} \norm{X}_{L^1(0,t)}.
	\end{align*}
	Now, we take the $L^r(\Omega)$-norm and apply Minkowski's inequality to get
	\begin{align*}
	\norm{X}_{L^r(\Omega,L^q(0,t))}&\le \varepsilon \norm{X}_{L^r(\Omega,L^\infty(0,t))} +\varepsilon^{1-q} \frac{1}{q} \left(1-\frac{1}{q}\right)^{q-1}\int_0^t \norm{X(s)}_{L^r(\Omega)} \df s\\
	&\le\varepsilon \norm{X}_{L^r(\Omega,L^\infty(0,t))} +\varepsilon^{1-q} \frac{1}{q} \left(1-\frac{1}{q}\right)^{q-1}\int_0^t \norm{X}_{L^r(\Omega,L^\infty(0,s))} \df s.
	\end{align*}
\end{proof}	
Now, we are ready prove that the solutions of \eqref{marcusGalerkin} have uniform energy estimates and satisfy the Aldous condition.

	\begin{Prop}\label{EstimatesGalerkinSolution}
		Let us assume Assumption $\ref{focusing}$ i). Then, the following assertions hold: 
		\begin{trivlist}
			\item[\;a)]  For all $q\in [1,\infty)$ there exists $C=C(\energy{(u_0)},T,b_\EA,b_{\alpha+1},q, \alpha,F)>0$  such that
			\begin{align*}
			\sup_{n\in\N}\E \Big[\sup_{t\in[0,T]} \left[\norm{u_n(t)}_\Lzwei^2+\energy(u_n(t))\right]^q\Big]\le  C.
			\end{align*}
%
			\item[\;b)] The sequence $(u_n)_{n\in\N}$ satisfies the Aldous condition $[A]$ in $\EAdual.$
		\end{trivlist}
		\item[\;c)] 	The sequence $\left({\Prob}^{u_n}\right)_{n\in\N}$ is tight in $Z_T$.
	\end{Prop}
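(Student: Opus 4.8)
The plan is to prove the three assertions in order, since (b) and (c) depend on the uniform bound in (a), and (c) is then almost immediate from the tightness criterion already established. For part (a), I would apply the Itô formula for the Marcus equation \eqref{marcusGalerkin} to the functional $v \mapsto \norm{v}_H^2 + \energy(v)$ on the finite-dimensional space $H_n$. The mass term $\norm{u_n(t)}_H^2$ is already known to be constant in $t$ and equal to $\norm{u_0}_H^2$ by Proposition \ref{massConservationGalerkin}, so the only work is to control $\energy(u_n(t))$. Expanding $\d\, \energy(u_n(t))$ via Itô's formula produces: a drift term $\Real\dualityReal{Au_n+F(u_n)}{-\im(Au_n+P_nF(u_n))}$ coming from the deterministic part of the equation — here one checks, as in the mass computation, that $\Real\duality{Au_n}{-\im Au_n}=0$, $\Real\duality{F(u_n)}{-\im P_nF(u_n)}=\Real\duality{P_nF(u_n)}{-\im P_nF(u_n)}$ (using that $P_n$ is a self-adjoint projection) $=0$, and the cross terms combine to zero by \eqref{nonlinearityComplex} — so the genuine drift vanishes and one is left only with the compensator and jump martingale coming from the noise; a purely jump martingale term $\int_0^t\int_B[\energy(\groupBn u_n(s-))-\energy(u_n(s-))]\,\tilde\eta(\df s,\df l)$; and the compensator $\int_0^t\int_B[\energy(\groupBn u_n(s))-\energy(u_n(s))+\energy'[u_n(s)](\im\Bn(l)u_n(s))]\,\nu(\df l)\,\df s$. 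The compensator is controlled pointwise by Lemma \ref{lem-EstimateItoDifferencesEnergy} b) by $C|l|^2(\norm{u_n(s)}_\EA^2+\norm{u_n(s)}_{L^{\alpha+1}}^{\alpha+1})$, and using \eqref{LevyMeasure} this integrates to $C'\int_0^t(\norm{u_n(s)}_\EA^2+\norm{u_n(s)}_{L^{\alpha+1}}^{\alpha+1})\,\df s$; by Assumption \ref{focusing} i) and the constancy of the $H$-norm, this is bounded by $C''\int_0^t(1+\energy(u_n(s)))\,\df s$ after absorbing $\norm{u_n(s)}_\EA^2 \lesssim \norm{u_0}_H^2 + \norm{A^{1/2}u_n(s)}_H^2$ and $\norm{u_n(s)}_{L^{\alpha+1}}^{\alpha+1}\lesssim \Fhat(u_n(s))\le\energy(u_n(s))$ (since $\Fhat\ge0$ in the defocusing case).

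The main work in (a) is then the stochastic estimate. Take $q\in[1,\infty)$; raising to the power $q$, taking the supremum over $t$, and applying $\E$, the deterministic drift part gives $\E\sup_{t\le T}(\int_0^t C''(1+\energy(u_n(s)))\,\df s)^q \lesssim \int_0^T \E\sup_{s'\le s}(1+\energy(u_n(s')))^q\,\df s$ by Jensen/Hölder in the time variable. For the jump martingale one applies the Burkholder–Davis–Gundy inequality for stochastic integrals with respect to $\tilde\eta$, bounding $\E\sup_{t\le T}|\int_0^t\int_B[\energy(\groupBn u_n(s-))-\energy(u_n(s-))]\,\tilde\eta|^q$ by $\E(\int_0^T\int_B|\energy(\groupBn u_n)-\energy(u_n)|^2\,\nu(\df l)\,\df s)^{q/2}$ plus (for $q\ge2$) the "large jump" term $\E\int_0^T\int_B|\energy(\groupBn u_n)-\energy(u_n)|^q\,\nu(\df l)\,\df s$; estimate both integrands by Lemma \ref{lem-EstimateItoDifferencesEnergy} a), giving $\lesssim |l|(\norm{u_n}_\EA^2+\norm{u_n}_{L^{\alpha+1}}^{\alpha+1})\lesssim|l|(1+\energy(u_n))$, so the quadratic-variation term becomes $\lesssim \E(\int_0^T(1+\energy(u_n(s)))^2\,\df s)^{q/2}$. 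Now use Lemma \ref{LemmaYOmegaNachLzweiZeit} (with $X=1+\energy(u_n(\cdot))$, $r=q$, suitable $\eps$) to split the $L^2_s$ (and $L^q_s$) norms into a small multiple of $\norm{X}_{L^q(\Omega,L^\infty(0,T))}$ plus an integral of $\norm{X}_{L^q(\Omega,L^\infty(0,s))}$; choosing $\eps$ small enough to absorb the former into the left-hand side $\E\sup_{t\le T}(1+\energy(u_n(t)))^q$. This reduces everything to $\phi_n(t):=\E\sup_{s\le t}(1+\energy(u_n(s)))^q \le C_1 + C_2\int_0^t\phi_n(s)\,\df s$ uniformly in $n$ — provided one first checks $\phi_n(t)<\infty$, which follows because $u_n$ lives in the finite-dimensional $H_n$ and has constant $H$-norm, so all norms are comparable and a priori finite on $[0,T]$ — and Gronwall's lemma then gives $\phi_n(T)\le C_1 e^{C_2 T}$ uniformly in $n$. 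Combined with Proposition \ref{massConservationGalerkin} this is exactly (a). The hardest part is getting the bookkeeping of the BDG estimate, the $L^p_t$-interpolation via Lemma \ref{LemmaYOmegaNachLzweiZeit}, and the absorption step right so that the Gronwall argument closes uniformly in $n$.

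For part (b), I would verify the Aldous condition in $\EAdual$ directly from the equation \eqref{marcusGalerkin}. Fix a sequence of $[0,T]$-valued stopping times $\tau_n$ and $\theta\in(0,\delta]$; write $u_n((\tau_n+\theta)\wedge T)-u_n(\tau_n)$ as the sum of four increments corresponding to the four terms on the right of \eqref{marcusGalerkin}: the drift $-\im\int_{\tau_n}^{(\tau_n+\theta)\wedge T}(Au_n+P_nF(u_n))\,\df s$, estimated in $\EAdual$ using $\norm{Au_n(s)}_{\EAdual}\lesssim\norm{u_n(s)}_\EA$ (boundedness of $A:\EA\to\EAdual$, Lemma \ref{spaceLemma} a)) and $\norm{P_nF(u_n(s))}_{\EAdual}\lesssim\norm{F(u_n(s))}_{L^{(\alpha+1)/\alpha}}\lesssim\norm{u_n(s)}_\EA^\alpha$, so this increment has $\EAdual$-norm $\lesssim\theta\sup_{s\le T}(\norm{u_n(s)}_\EA+\norm{u_n(s)}_\EA^\alpha)$; the compensated jump integral, whose $L^2(\Omega;\EAdual)$ norm squared is $\lesssim\E\int_{\tau_n}^{(\tau_n+\theta)\wedge T}\int_B\norm{\groupBn u_n(s-)-u_n(s-)}_{\EAdual}^2\,\nu(\df l)\,\df s$, controlled via $\norm{\groupBn u_n - u_n}_{\EAdual}\lesssim\norm{\groupBn u_n-u_n}_H\lesssim b_H^{1/2}|l|\norm{u_n}_H=b_H^{1/2}|l|\norm{u_0}_H$ (Lemma \ref{AldousDifferences} and the embedding $H\hookrightarrow\EAdual$) so this is $\lesssim\theta\,\E\norm{u_0}_H^2\int_B|l|^2\,\nu(\df l)$; and the compensator term, estimated the same way with the additional $\im\Bn(l)u_n$ correction via the second inequality of Lemma \ref{AldousDifferences}, again of order $\theta$. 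Using the uniform bound of (a) (in particular $\sup_n\E\sup_{t\le T}\norm{u_n(t)}_\EA^{2\alpha}<\infty$, which (a) provides since it holds for all $q$), Chebyshev's inequality turns each of these into a probability bound of the form $\le C\theta/\eta^2$ or $C\theta^{1/2}/\eta$, uniformly in $n$ and in $\tau_n$; choosing $\delta$ small makes the total $\le\eps$, which is the Aldous condition $[A]$ in $\EAdual$. Finally, part (c) is immediate: the hypotheses of Proposition \ref{TightnessCriterion} — the Aldous condition in $\EAdual$ from (b) and $\sup_n\E\sup_{t\le T}\norm{u_n(t)}_\EA^2<\infty$ from (a) with $q=2$ — are exactly met, so $(\Prob^{u_n})_{n\in\N}$ is tight in $Z_T$.
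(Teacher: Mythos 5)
Your proposal is correct and follows essentially the same route as the paper: It\^o's formula for $\tfrac12\norm{u_n}_H^2+\energy(u_n)$, cancellation of the deterministic drift, the maximal (BDG-type) inequality for the compensated Poisson integral together with Lemma \ref{lem-EstimateItoDifferencesEnergy}, the interpolation Lemma \ref{LemmaYOmegaNachLzweiZeit} with an absorption step and Gronwall for a), the four-term decomposition with Chebyshev for the Aldous condition in b), and Proposition \ref{TightnessCriterion} for c). The only detail you leave implicit is the bound $\energy(\widetilde{u_{0,n}})\lesssim \energy(u_0)$ uniformly in $n$ (via \eqref{initialValueBoundedness}, the uniform bounds on $S_n$ and \eqref{boundantiderivative}), which is needed so that the Gronwall constant depends on $\energy(u_0)$ rather than on $n$; also note that the cross terms in the drift cancel by conjugate symmetry (and $P_nAu_n=Au_n$), not by \eqref{nonlinearityComplex}.
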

	
	\begin{proof}
\emph{Ad c):} Follows from the two other parts by applying Proposition \ref{TightnessCriterion}.\\
		\emph{Ad a):} Since $\widetilde{u_{0,n}}=0$ already implies $u_n\equiv 0,$ we may assume $\widetilde{u_{0,n}}\neq 0$ without loss of generality.  Furthermore, we only prove the assertion for $q> 2.$ The case $q\in [1,2]$ is a simple consequence of the H\"older inequality.
		Recall that the energy $\energy$ is twice Frechet differentiable.
		In particular, the function $\energy^{\prime}$ is H\"older continuous.  Hence, we can use Proposition \ref{massConservationGalerkin} and the It\^o  formula \ref{T2} to deduce		
					\begin{align}\label{ItoEnergyStart}
					\frac{1}{2}\norm{u_n(s)}_\Lzwei^2+&\energy\left(u_n(s)\right)=\frac{1}{2}\norm{\widetilde{u_{0,n}}}_\Lzwei^2+\energy\left(\widetilde{u_{0,n}}\right)\nonumber\\
					&+ \int_0^s \Real \duality{A u_n(r)+F(u_n(r))}{ -\im A u_n(r)-\im P_n F(u_n(r))}\df r\nonumber\\
					&+\int_0^s \int_{\left\{\vert l \vert\le 1\right\}} \left[\energy(\groupBn u_n(r-))-\energy(u_n(r-))\right] \tilde{\eta}(\df l,\df r)\nonumber\\
					&+\int_0^s \int_{\left\{\vert l \vert\le 1\right\}} \left[\energy(\groupBn u_n(r))-\energy(u_n(r))+\energy^{\prime}[u_n(r)]\left(\im \Bn(l) u_n(r)\right)\right]  \nu(\df l)\df r\nonumber\\
					&=:\frac{1}{2}\norm{\widetilde{u_{0,n}}}_\Lzwei^2+\energy\left(\widetilde{u_{0,n}}\right)+I_1(s)+I_2(s)+I_3(s)
					\end{align}
					almost surely for all $s\in [0,T].$
		The first integral $I_1(s)$ cancels due to the following three identities which hold for all 		for all $v\in H_n$: 				
		\begin{align*}
		\Real \duality{F(v)}{ -\im P_n F(v)}=\Real \left[\im \duality{F(v)}{  P_n F(v)}\right]=0;
		\end{align*}
		\begin{align*}
		\Real \left[\duality{A v}{ -\im P_n F(v)}+\duality{F(v)}{ -\im A v}\right]
		&=\Real \left[-\duality{A v}{ \im F(v)}+\overline{\duality{  A v}{\im F(v)}}\right]=0;
		\end{align*}
		\begin{align*}
		\Real \skpLzwei{A v}{ -\im A v}=\Real \left[\im \norm{A v}_\Lzwei^2\right]=0
		\end{align*}

By the maximal inequality for the Poisson stochastic integral, see Theorem 4.5 in \cite{Dirksen}, and  Lemma \ref{lem-EstimateItoDifferencesEnergy}, we obtain
\begin{align}\label{burkholderEstimate1}
\left(\E \left[ \sup_{s\in[0,t]} \left\vert I_2(s)\right\vert^q\right]\right)^\frac{1}{q}
&\lesssim\left(\E   \left(\int_0^t\int_{\left\{\vert l \vert\le 1\right\}} \left\vert\energy(\groupBn u_n(s))-\energy(u_n(s))\right\vert^2 \nu(\df l)\df s\right)^{\frac{q}{2}}\right)^\frac{1}{q}\nonumber\\
&\quad +\left(\E   \int_0^t\int_{\left\{\vert l \vert\le 1\right\}} \left\vert\energy(\groupBn u_n(s))-\energy(u_n(s))\right\vert^q \nu(\df l)\df s\right)^\frac{1}{q}\nonumber\\
&\hspace{-1truecm}\lesssim\left(\E   \left(\int_0^t\int_{\left\{\vert l \vert\le 1\right\}} \vert l \vert^2 \left(\norm{u_n(s)}_\EA^2+\norm{u_n(s)}_{L^{\alpha+1}}^{\alpha+1}\right)^{2}\nu(\df l)\df s\right)^{\frac{q}{2}}\right)^\frac{1}{q}\nonumber\\
&+ \left(\E   \int_0^t\int_{\left\{\vert l \vert\le 1\right\}} \vert l \vert^q \left(\norm{u_n(s)}_\EA^2+\norm{u_n(s)}_{L^{\alpha+1}}^{\alpha+1}\right)^{q}\nu(\df l)\df s\right)^\frac{1}{q}.\nonumber\\
\end{align}
		We introduce the abbreviation
		\begin{align*}
		X_n:=\frac{1}{2}\norm{u_n}_{L^2}^2+\energy(u_n)
		\end{align*}
		and observe
		\begin{align}\label{Xdefocusing}
		\norm{u_n}_\EA^2+\norm{u_n}_{L^{\alpha+1}}^{\alpha+1}\lesssim X_n.
		\end{align}
		Moreover, we have
		\begin{align}\label{highNormsOverBall}
		\int_{\left\{\vert l \vert\le 1\right\}} \vert l\vert^q\,\nu(\df l)\le \int_{\left\{\vert l \vert\le 1\right\}} \vert l\vert^2\,\nu(\df l)<\infty,\qquad q\ge 2.
		\end{align}
%
%
		Thus, we can conclude
		\begin{align}\label{burkholderEstimate1}
		\left(\E \left[ \sup_{s\in[0,t]} \left\vert I_2(s)\right\vert^q\right]\right)^\frac{1}{q}
		&\lesssim\left(\E   \left(\int_0^t X_n(s)^2\df s\right)^{\frac{q}{2}}\right)^\frac{1}{q}+ \left(\E   \int_0^t X_n(s)^q\df s\right)^\frac{1}{q}\nonumber\\
		&= \norm{X_n}_{L^q(\Omega,L^2(0,t))}+\norm{X_n}_{L^q(\Omega,L^q(0,t))},\;\; t \in [0,T].
		\end{align}
		By Lemma \ref{lem-EstimateItoDifferencesEnergy} b), \eqref{Xdefocusing} and the Minkowski inequality
		\begin{align*}
		\left(\E \Big[\sup_{s\in [0,t]}\vert I_3(s)\vert^q\Big]\right)^\frac{1}{q}\lesssim& \int_{\left\{\vert l \vert\le 1\right\}} \vert l\vert^2 \nu(\df l) \left(\E\left(\int_0^t \left(\norm{u_n(r)}_\EA^2+\norm{u_n(r)}_{L^{\alpha+1}}^{\alpha+1}\right) \df r\right)^q\right)^\frac{1}{q}\\
&\hspace{-2truecm}
		\lesssim \int_{\left\{\vert l \vert\le 1\right\}} \vert l\vert^2 \nu(\df l) \int_0^t \left\Vert X_n(r)\right\Vert_{L^q(\Omega)} \df r
		\lesssim \int_0^t \left\Vert X_n\right\Vert_{L^q(\Omega,L^\infty(0,r))} \df r ,\;\; t \in [0,T].
		\end{align*}
Therefore,  from \eqref{ItoEnergyStart} and the previous estimates we get
\begin{align}\label{ItoEnergyAfterBurkholder}
\norm{X_n}_{L^q(\Omega,L^\infty(0,t))}
&\le \frac{1}{2}\norm{\widetilde{u_{0,n}}}_\Lzwei^2+\energy(\widetilde{u_{0,n}})+\left(\E \Big[ \sup_{s\in[0,t]} \vert I_2(s)\vert^q\Big]\right)^\frac{1}{q}+\left(\E \Big[ \sup_{s\in[0,t]} \vert I_3(s)\vert^q\Big]\right)^\frac{1}{q}\nonumber\\
&\lesssim \frac{1}{2}\norm{u_0}_\Lzwei^2+\energy(\widetilde{u_{0,n}})+	\norm{X_n}_{L^q(\Omega,L^2(0,t))}+	\norm{X_n}_{L^q(\Omega,L^q(0,t))}\nonumber\\	&\quad+\int_0^t \norm{X_n}_{L^q(\Omega,L^\infty(0,s))}\df s,\;\; t \in [0,T].	
\end{align}
Using Lemma $\ref{LemmaYOmegaNachLzweiZeit}$ with  $\varepsilon>0$ to estimate  $\norm{X_n}_{L^q(\Omega,L^2(0,t))}$ and $\norm{X_n}_{L^q(\Omega,L^q(0,t))},$ we get for $\;\; t \in [0,T]$,
\begin{align*}
\norm{X_n}_{L^q(\Omega,L^\infty(0,t))}
&\lesssim \frac{1}{2}\norm{u_0}_\Lzwei^2+\energy(\widetilde{u_{0,n}})+\varepsilon\norm{X_n}_{L^q(\Omega,L^\infty(0,t))}+	\int_0^t \norm{X_n}_{L^q(\Omega,L^\infty(0,s))}\df s.
\end{align*}
Taking $\eps$ sufficiently small we
end up with
\begin{align*}
\norm{X_n}_{L^q(\Omega,L^\infty(0,t))}
&\lesssim \frac{1}{2}\norm{u_0}_\Lzwei^2+\energy(\widetilde{u_{0,n}})+	\int_0^t \norm{X_n}_{L^q(\Omega,L^\infty(0,s))}\df s,\;\; t \in [0,T].	
\end{align*}
Finally, the Gronwall Lemma yields
\begin{align*}
\norm{X_n}_{L^q(\Omega,L^\infty(0,t))}\le C\left(\frac{1}{2}\norm{u_0}_\Lzwei^2+\energy(\widetilde{u_{0,n}})\right) e^{C t},\qquad t\in [0,T],
\end{align*}
where the constant $C=C(b_\EA,b_{\alpha+1},q, \alpha,F)>0$ is uniform in $n\in\N.$ As a consequence of \eqref{initialValueBoundedness} and Proposition \ref{PaleyLittlewoodLemma}, we obtain 
\begin{align}\label{initialEnergy}
\energy(\widetilde{u_{0,n}})&\lesssim \frac{\norm{u_0}_\Lzwei^2}{\norm{S_n u_0}_\Lzwei^2} \norm{\sqrtA S_n u_0}_\Lzwei^2+\frac{\norm{u_0}_\Lzwei^{\alpha+1}}{\norm{S_n u_0}_\Lzwei^{\alpha+1}}\norm{S_n u_0}_{L^{\alpha+1}}^{\alpha+1}\nonumber\\
&\lesssim \norm{\sqrtA u_0}_\Lzwei^2+\norm{u_0}_{L^{\alpha+1}}^{\alpha+1}\lesssim \energy(u_0)
\end{align}
for $n\ge n_0$ and $\energy(\widetilde{u_{0,n}})=0$ for $n<n_0.$ This completes the proof of Proposition \ref{EstimatesGalerkinSolution} a).\\

		\emph{Ad b):} Now, we continue with the proof of the Aldous condition. Let us fix $n\in \mathbb{N}$. We have  for all $t\in [0,T]$, almost surely
		\begin{align*}
		u_n(t) - \widetilde{u_{0,n}} =&-\im \int_0^t A u_n(s) \df s-\im \int_0^t P_n F(u_n(s)) \df s \\
		&+ \int_{0}^{t}\! \int_{\left\{\vert l \vert\le 1\right\}} \!\left[\groupBn u_n(s-) - u_n(s-)\right]\, \tilde{\eta}(\df s,\df l)\nonumber\\
		&+ \int_{0}^{t}\! \int_{\left\{\vert l \vert\le 1\right\}} \! \left\{ \groupBn u(s) - u(s) + \im \Bn(l) u(s)\right\}\, \nu(\df l)\df s\\
		=&:J_1(t)+J_2(t)+J_3(t)+J_4(t)
		\end{align*}
		in $H_n$. Let us next fix a sequence $\left(\tau_n\right)_{n\in\N}$ of stopping times and $\theta>0.$ By the above we infer that
		\begin{align*}
		\Vert u_n((\tau_n+\theta)\land T)-u_n(\tau_n)\Vert_\EAdual\le\sum_{k=1}^4 \Vert J_k((\tau_n+\theta)\land T)-J_k(\tau_n)\Vert_{\EAdual}.
		\end{align*}
		Hence, 		for a fixed $\eta>0$, we get
		\begin{align}\label{AldousStartingEstimate}
		\Prob \left\{\Vert u_n((\tau_n+\theta)\land T)-u_n(\tau_n)\Vert_\EAdual\ge \eta \right\}\le \sum_{k=1}^4 \Prob \left\{\Vert J_k((\tau_n+\theta)\land . T)-J_k(\tau_n)\Vert_{\EAdual}\ge \frac{\eta}{4}\right\}
		\end{align}
We aim to apply the Chebyshev  inequality and  estimate the expected value of each term in the sum on the RHS of \eqref{AldousStartingEstimate}. We use part a) for
		\begin{align*}
		\E\Vert J_1((\tau_n+\theta)\land T)-J_1(\tau_n)\Vert_{\EAdual}&\le \E \int_{\tau_n}^{(\tau_n+\theta)\land T} \Vert A u_n(s)\Vert_\EAdual \df s
		\le   \E\int_{\tau_n}^{(\tau_n+\theta)\land T} \Vert \sqrtA u_n(s)\Vert_\Lzwei \df s\\
&\lesssim \theta \E \big[ \sup_{s\in[0,T]} \norm{u_n(s)}_\EA\big]
\le \theta \E \big[ \sup_{s\in[0,T]} \norm{u_n(s)}_\EA^2\big]^{\frac{1}{2}}\le \theta C_1;
		\end{align*}
		the embedding $ \LalphaPlusEinsDual\hookrightarrow \EAdual$ and the nonlinear estimates $\eqref{nonlinearityEstimate}$ and $\eqref{boundantiderivative}$ for
		\begin{align*}
		\E\Vert J_2((\tau_n+\theta)\land T)&-J_2(\tau_n)\Vert_{\EAdual}\le \E \int_{\tau_n}^{(\tau_n+\theta)\land T} \Vert P_n F(u_n(s))\Vert_\EAdual \df s\\
		&\le \E \int_{\tau_n}^{(\tau_n+\theta)\land T} \Vert  F(u_n(s))\Vert_\EAdual \df s\lesssim  \E\int_{\tau_n}^{(\tau_n+\theta)\land T} \Vert  F(u_n(s))\Vert_\LalphaPlusEinsDual \df s\\&\lesssim  \E\int_{\tau_n}^{(\tau_n+\theta)\land T} \Vert  u_n(s)\Vert_\LalphaPlusEins^\alpha \df s
		\lesssim \theta \E \big[\sup_{s\in[0,T]} \norm{u_n(s)}_\EA^\alpha\big]
		\le \theta C_2
		\end{align*}
By the Levy-It\^o-isometry, Lemma \ref{AldousDifferences}, \eqref{LevyMeasure} and Proposition \ref{massConservationGalerkin} we get
		\begin{align*}
		\E\Vert J_3((\tau_n+\theta)\land T)-J_3(\tau_n)\Vert_{\EAdual}^2&
		\lesssim \E \left\Vert\int_{\tau_n}^{(\tau_n+\theta)\land T}\! \int_{\left\{\vert l \vert\le 1\right\}} \!\left[\groupBn u_n(s-) - u_n(s-)\right]\, \tilde{\eta}(\df s,\df l)\right\Vert_\Lzwei^2\\
		&= \E \int_{\tau_n}^{(\tau_n+\theta)\land T}\! \int_{\left\{\vert l \vert\le 1\right\}} \!\norm{\groupBn u_n(s) - u_n(s)}_\Lzwei^2\, \nu(\df l)\df s\\
		&\le b_\Lzwei \int_{\left\{\vert l \vert\le 1\right\}} \vert l\vert^2\nu(\df l) \E \int_{\tau_n}^{(\tau_n+\theta)\land T} \norm{u_n(s)}_\Lzwei^2\df s\lesssim \theta \norm{u_0}_\Lzwei^2
		\end{align*}
and
		\begin{align*}
		\E\Vert J_4((\tau_n+\theta)\land T)-&J_4(\tau_n)\Vert_{\EAdual}\\
		&= \E \left\Vert \int_{\tau_n}^{(\tau_n+\theta)\land T}\int_{\left\{\vert l \vert\le 1\right\}} \! \left\{ \groupBn u_n(s) - u_n(s) + \im \Bn(l) u_n(s)\right\}\, \nu(\df l) \df s \right\Vert_\EAdual\\
				&\lesssim  \E  \int_{\tau_n}^{(\tau_n+\theta)\land T}\int_{\left\{\vert l \vert\le 1\right\}} \left\Vert  \groupBn u_n(s) - u_n(s) + \im \Bn(l) u_n(s)\right\Vert_\Lzwei \nu(\df l) \df s \\		&\le \frac{1}{2} b_\Lzwei \int_{\left\{\vert l \vert\le 1\right\}} \vert l\vert^2\nu(\df l) \E \int_{\tau_n}^{(\tau_n+\theta)\land T} \norm{u_n(s)}_\Lzwei\df s\lesssim \theta \norm{u_0}_\Lzwei.	
		\end{align*}
		By the Chebyshev inequality, we obtain for a given $\eta>0$
		\begin{align}\label{AldousEins}
		\Prob \left\{\Vert J_k((\tau_n+\theta)\land T)-J_k(\tau_n)\Vert_{\EAdual}\ge \frac{\eta}{4}\right\}\le \frac{4}{\eta} \E\Vert J_k((\tau_n+\theta)\land T)-J_k(\tau_n)\Vert_{\EAdual}\le \frac{ 4C_k \theta}{\eta}
		\end{align}
		for $k\in \{1,2,4\}$ and
		\begin{align}\label{AldousZwei}
		\Prob \left\{\Vert J_3((\tau_n+\theta)\land T)-J_3(\tau_n)\Vert_{\EAdual}\ge \frac{\eta}{4}\right\}\le \frac{16}{\eta^2} \E\Vert J_3((\tau_n+\theta)\land T)-J_3(\tau_n)\Vert_{\EAdual}^2\le \frac{16 C_4 \theta}{\eta^2}.
		\end{align}
		Let us fix $\varepsilon>0.$ Due to  estimates $\eqref{AldousEins}$ and $\eqref{AldousZwei}$ we can choose $\delta_1,\dots,\delta_4>0$ such that
		\begin{align*}
		\Prob \left\{\Vert J_k((\tau_n+\theta)\land T)-J_k(\tau_n)\Vert_{\EAdual}\ge \frac{\eta}{4}\right\}\le \frac{\varepsilon}{4}
		\end{align*}
		for $0<\theta\le \delta_k$ and $k=1,\dots,4.$ With $\delta:= \min \left\{\delta_1,\dots,\delta_4\right\},$  using $\eqref{AldousStartingEstimate}$ we get
		\begin{align*}
		\Prob \left\{\Vert J_k((\tau_n+\theta)\land T)-J_k(\tau_n)\Vert_{\EAdual}\ge \eta\right\}\le \varepsilon
		\end{align*}
		for all $n\in\N$ and $0<\theta\le \delta$ and therefore, the Aldous condition $[A]$ holds in $E_A^*.$
\end{proof}

We continue with the a priori estimate for solutions of $\eqref{marcusGalerkin}$ with a focusing nonlinearity. Note that this case is harder since the expression
\begin{align*}
\frac{1}{2}\norm{v}_{H}^2+\energy(v)=\frac{1}{2}\norm{v}_\EA^2+\Fhat(v), \qquad v\in H_n,
\end{align*}
does not dominate $\norm{v}_\EA^2,$ because $\Fhat$ is negative. Nevertheless, we will see that the $\EA$-norm is still the dominating part under the additional Assumption
\ref{focusing} i'), which leads to a restriction to the maximal degree of the nonlinearity $F.$ In particular, uniform estimates in $\EA$ are still possible.

\begin{Prop}\label{EstimatesGalerkinSolutionFocusing}
	Under Assumption $\ref{focusing}$ i'), the following assertions hold:
	\begin{enumerate}
		\item[a)] For all $r\in[1,\infty),$ there is a constant 
		\begin{align*}
			C=C(\norm{u_0}_{H},\norm{\sqrtA u_0}_\Lzwei,\norm{u_0}_{L^{\alpha+1}},\gamma,\alpha,T, F,b_\EA,b_{\alpha+1},r)>0
		\end{align*}
		 with
		\begin{align*}
		\sup_{n\in\N}\E \Big[\sup_{t\in[0,T]} \norm{u_n(t)}_\EA^{r}\Big]\le C;
		\end{align*}
		\item[b)] The sequence $(u_n)_{n\in\N}$ satisfies the Aldous condition $[A]$ in $\EAdual.$
	\end{enumerate}
	In particular, the sequence $\left({\Prob}^{u_n}\right)_{n\in\N}$ is tight in $Z_T$ by Proposition \ref{TightnessCriterion}.
\end{Prop}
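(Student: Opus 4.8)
The plan is to mimic the proof of Proposition \ref{EstimatesGalerkinSolution} step by step; the only genuinely new point is that under Assumption \ref{focusing} i') the antiderivative $\Fhat$ is nonpositive, so $\energy$ alone no longer controls the $\EA$-norm. To compensate, I would first record that \eqref{interpolationFocusing} yields the interpolation inequality
\begin{align*}
\norm{v}_\LalphaPlusEins\lesssim\norm{v}_{(H,\EA)_{\theta,1}}\lesssim\norm{v}_{\Lzwei}^{1-\theta}\norm{v}_\EA^{\theta},\qquad v\in\EA,
\end{align*}
and combine it with \eqref{boundantiderivativeFocusing1}. Since $\theta(\alpha+1)<2$, Young's inequality then gives, for every $\eps>0$,
\begin{align*}
-\Fhat(v)\le C_{F,4}\norm{v}_\LalphaPlusEins^{\alpha+1}\le\eps\norm{v}_\EA^2+C_\eps\norm{v}_{\Lzwei}^{\sigma},\qquad v\in\EA,\qquad \sigma:=\tfrac{2(1-\theta)(\alpha+1)}{2-\theta(\alpha+1)}.
\end{align*}
Using $\norm{v}_\EA^2=\norm{v}_{\Lzwei}^2+\norm{\sqrtA v}_{\Lzwei}^2=\norm{v}_{\Lzwei}^2+2\energy(v)-2\Fhat(v)$ and choosing $\eps=\tfrac14$, this becomes $\tfrac12\norm{v}_\EA^2\le\norm{v}_{\Lzwei}^2+2\energy(v)+C_\eps\norm{v}_{\Lzwei}^{\sigma}$. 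Since Proposition \ref{massConservationGalerkin} gives $\norm{u_n(t)}_{\Lzwei}=\norm{u_0}_{\Lzwei}$ almost surely for all $t$, the last term is a \emph{deterministic} constant, so, with $X_n:=\tfrac12\norm{u_n}_{L^2}^2+\energy(u_n)$ as in the defocusing case, we obtain the two-sided comparison
\begin{align*}
\norm{u_n(t)}_\EA^2+\norm{u_n(t)}_\LalphaPlusEins^{\alpha+1}\lesssim 1+X_n(t),\qquad X_n(t)\lesssim 1+\norm{u_n(t)}_\EA^2
\end{align*}
almost surely, with constants depending only on $\norm{u_0}_{\Lzwei},\alpha,\theta$ and $F$ (for the first inequality one also uses $\theta(\alpha+1)<2$ once more on $\norm{u_n}_\LalphaPlusEins^{\alpha+1}\lesssim\norm{u_0}_{\Lzwei}^{(1-\theta)(\alpha+1)}\norm{u_n}_\EA^{\theta(\alpha+1)}$). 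In particular $\widetilde X_n:=X_n+C_\ast\ge 1$ for a suitable deterministic $C_\ast$, and $\norm{u_n(t)}_\EA^2\lesssim\widetilde X_n(t)$.

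Given this, part a) is the argument of Proposition \ref{EstimatesGalerkinSolution} a) carried out for $\widetilde X_n$ instead of $X_n$. Applying the Itô formula to $X_n$ along \eqref{marcusGalerkin}, the drift integral cancels by the same three algebraic identities as there (they do not see the sign of $\Fhat$), the mass jump terms vanish by unitarity of $\groupBn$ on $\Lzwei$, and the compensated jump term $I_2$ and the compensator term $I_3$ are estimated via Lemma \ref{lem-EstimateItoDifferencesEnergy} by integrals of $\vert l\vert(\norm{u_n}_\EA^2+\norm{u_n}_\LalphaPlusEins^{\alpha+1})$ and $\vert l\vert^2(\norm{u_n}_\EA^2+\norm{u_n}_\LalphaPlusEins^{\alpha+1})$, which by the comparison above are $\lesssim\widetilde X_n$. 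The maximal inequality for Poisson stochastic integrals (Theorem 4.5 in \cite{Dirksen}) together with \eqref{highNormsOverBall} bounds $(\E[\sup_{s\le t}\vert I_2(s)\vert^q])^{1/q}$ by $\norm{\widetilde X_n}_{L^q(\Omega,L^2(0,t))}+\norm{\widetilde X_n}_{L^q(\Omega,L^q(0,t))}$ and $(\E[\sup_{s\le t}\vert I_3(s)\vert^q])^{1/q}$ by $\int_0^t\norm{\widetilde X_n}_{L^q(\Omega,L^\infty(0,s))}\,\df s$. Since $I_1\equiv 0$ and $\widetilde X_n\ge 0$, one has $\sup_{s\le t}\widetilde X_n(s)\le\widetilde X_n(0)+\sup_{s\le t}\vert I_2(s)\vert+\sup_{s\le t}\vert I_3(s)\vert$; taking $L^q(\Omega)$-norms, absorbing the $L^2(0,t)$- and $L^q(0,t)$-norms into $\eps\norm{\widetilde X_n}_{L^q(\Omega,L^\infty(0,t))}$ by Lemma \ref{LemmaYOmegaNachLzweiZeit}, and invoking Gronwall's lemma yields $\sup_{n\in\N}\norm{\widetilde X_n}_{L^q(\Omega,L^\infty(0,T))}<\infty$ for every $q\in(2,\infty)$. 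One checks exactly as in \eqref{initialEnergy}, using \eqref{initialValueBoundedness}, Proposition \ref{PaleyLittlewoodLemma} and $\vert\Fhat(v)\vert\le C_{F,4}\norm{v}_\LalphaPlusEins^{\alpha+1}$, that $\widetilde X_n(0)$ is bounded uniformly in $n$. Since $\norm{u_n(t)}_\EA^2\lesssim\widetilde X_n(t)$, this proves a) for $r\in(2,\infty)$, and the case $r\in[1,2]$ follows by Hölder's inequality.

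Part b) is then literally the computation in the proof of Proposition \ref{EstimatesGalerkinSolution} b): writing $u_n(t)-\widetilde{u_{0,n}}=J_1(t)+J_2(t)+J_3(t)+J_4(t)$ as there, the $\EAdual$-increments of $J_1$ and $J_2$ over $[\tau_n,(\tau_n+\theta)\land T]$ are bounded by $\theta\,\E[\sup_{t\in[0,T]}\norm{u_n(t)}_\EA]$ and $\theta\,\E[\sup_{t\in[0,T]}\norm{u_n(t)}_\EA^{\alpha}]$ using $\LalphaPlusEinsDual\hookrightarrow\EAdual$, \eqref{nonlinearityEstimate} and part a), while those of $J_3$ and $J_4$ are bounded using the Lévy--Itô isometry, Lemma \ref{AldousDifferences}, \eqref{LevyMeasure} and the mass conservation of Proposition \ref{massConservationGalerkin}; Chebyshev's inequality then gives the Aldous condition $[A]$ in $\EAdual$. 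Finally, the tightness of $(\Prob^{u_n})_{n\in\N}$ in $Z_T$ is the conclusion of Proposition \ref{TightnessCriterion} applied with a) (for $r=2$) and b). The main --- and essentially only --- obstacle is the two-sided comparison of the first paragraph: it is precisely here that the subcriticality $\theta(\alpha+1)<2$ from Assumption \ref{focusing} i') and the conservation of the $\Lzwei$-norm are jointly used to dominate the negative term $\Fhat$; everything downstream is identical to the defocusing case.
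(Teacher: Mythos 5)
Your proposal is correct and essentially reproduces the paper's own argument: the same key inequality $-\Fhat(u)\lesssim\varepsilon\norm{u}_\EA^2+C_\varepsilon\norm{u}_H^\gamma$ obtained from Assumption \ref{focusing} i') and Young's inequality, combined with mass conservation, the It\^o formula for mass plus energy, Lemma \ref{lem-EstimateItoDifferencesEnergy}, the Poisson maximal inequality, Lemma \ref{LemmaYOmegaNachLzweiZeit} and Gronwall, with part b) and the tightness handled exactly as in the defocusing case. The only cosmetic difference is that you run the Gronwall argument on the shifted mass--energy $\widetilde X_n$ after establishing its two-sided comparability with $\norm{u_n}_\EA^2$, whereas the paper isolates $\frac{1}{2}\norm{\sqrtA u_n}_{\Lzwei}^2$ and works with the auxiliary quantity $Y_n$; this changes nothing of substance (and your range bookkeeping for $r$ is harmlessly imprecise, since the H\"older step covers all $r$ below the threshold).
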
	

\begin{proof}	
	\emph{ad a):}
	Let $\varepsilon>0.$ Assumption $\ref{focusing}$ i') and Young's inequality imply that  there are $\gamma>0$ and $C_\varepsilon>0$ such that
	\begin{align}\label{focusingNonlinearityControl}
	\norm{u}_\LalphaPlusEins^{\alpha+1} \lesssim \varepsilon \norm{u}_\EA^2+C_\varepsilon \norm{u}_H^\gamma,\qquad u\in\EA,
	\end{align}
	and therefore by Proposition $\ref{massConservationGalerkin}$, we infer that
	\begin{align}\label{gagliardoFhat}
	-\Fhat(u_n(s))&\lesssim \norm{u_n(s)}_\LalphaPlusEins^{\alpha+1} \lesssim \varepsilon \norm{u_n(s)}_\EA^2+C_\varepsilon \norm{u_n(s)}_H^\gamma\nonumber\\
	&\lesssim \varepsilon \norm{\sqrtA u_n(s)}_{H}^2+\varepsilon \norm{u_0}_H^2+C_\varepsilon \norm{u_0}_H^\gamma,\qquad s\in [0,T].
	\end{align}
	By analogous calculations as in the proof of Proposition $\ref{EstimatesGalerkinSolution}$ we get
			\begin{align}\label{ItoEnergyStartFocusing}
			\frac{1}{2}\norm{ \sqrtA u_n(s)}_H^2=&-\Fhat(u_n(s))+\energy\left(u_n(s)\right)\nonumber\\
			=&-\Fhat(u_n(s))+\energy\left(\widetilde{u_{0,n}}\right)\nonumber\\
			&+\int_0^s \int_{\left\{\vert l \vert\le 1\right\}} \left[\energy(\groupBn u_n(r-))-\energy(u_n(r-))\right] \tilde{\eta}(\df l,\df r)\nonumber\\
						&+\int_0^s \int_{\left\{\vert l \vert\le 1\right\}} \left[\energy(\groupBn u_n(r))-\energy(u_n(r))+\energy'[u_n(r)]\left(\im \Bn(l) u_n(s)\right)\right]  \nu(\df l)\df r\nonumber\\
						=:&-\Fhat(u_n(s))+\energy\left(\widetilde{u_{0,n}}\right)+I_1(s)+I_2(s)
			\end{align}
	almost surely for all $t\in [0,T].$
		We abbreviate
		\begin{align*}
		Y_n(s):=\norm{u_0}_{L^2}^2+\norm{\sqrtA u_n(s)}_{L^2}^2+\norm{u_n(s)}_{L^{\alpha+1}}^{\alpha+1},\qquad s\in [0,T].
		\end{align*}
		Let $q>2$ and recall $\eqref{highNormsOverBall}$ as well as the mass conservation from Proposition \ref{massConservationGalerkin}.		
		As in the proof of Proposition $\ref{EstimatesGalerkinSolution}$, we estimate
			\begin{align}\label{initialEnergyFocusing}
		\vert\energy(\widetilde{u_{0,n}})\vert&\lesssim 
		\norm{\sqrtA u_0}_\Lzwei^2+\norm{u_0}_{L^{\alpha+1}}^{\alpha+1},
		\end{align}
		\begin{align}\label{FocusingI1}
		\left(\E \left[ \sup_{s\in[0,t]} \left\vert I_1(s)\right\vert^q\right]\right)^\frac{1}{q}
		&\lesssim \left(\int_{\left\{\vert l \vert\le 1\right\}} \vert l \vert^2 \nu(\df l)\right)^\frac{1}{2} \left(\E   \left(\int_0^t \left(\norm{u_n(s)}_\EA^2+\norm{u_n(s)}_{L^{\alpha+1}}^{\alpha+1}\right)^{2}\df s\right)^{\frac{q}{2}}\right)^\frac{1}{q}\nonumber\\
		&\quad +\left(\int_{\left\{\vert l \vert\le 1\right\}} \vert l \vert^q \nu(\df l)\right)^\frac{1}{q} \left(\E  \int_0^t \left(\norm{u_n(s)}_\EA^2+\norm{u_n(s)}_{L^{\alpha+1}}^{\alpha+1}\right)^{q}\df s\right)^\frac{1}{q}\nonumber\\
		&\lesssim\,\norm{Y_n}_{L^q(\Omega,L^2(0,t))}+\norm{Y_n}_{L^q(\Omega,L^q(0,t))};
		\end{align}
		\begin{align}\label{FocusingI2}
		\left(\E \Big[\sup_{s\in [0,t]}\vert I_2(s)\vert^q\Big]\right)^\frac{1}{q}&\lesssim\int_{\left\{\vert l \vert\le 1\right\}} \vert l\vert^2 \nu(\df l) \int_0^t \left\Vert\norm{u_n}_\EA^2+\norm{u_n}_{L^{\alpha+1}}^{\alpha+1}\right\Vert_{L^q(\Omega,L^\infty(0,r))} \df r\nonumber\\
		&\lesssim \int_0^t \norm{Y_n}_{L^q(\Omega,L^\infty(0,r))}\df r.
		\end{align}
		Using  \eqref{gagliardoFhat}, \eqref{initialEnergyFocusing}, \eqref{FocusingI1} and \eqref{FocusingI2}  in \eqref{ItoEnergyStartFocusing}, we obtain
		\begin{align*}
		\bigNorm{\norm{\sqrtA u_n}_{L^2}^2}_{L^q(\Omega,L^\infty(0,t))}\lesssim&\, \bigNorm{\norm{\sqrtA u_n}_{L^2}^2}_{L^q(\Omega,L^\infty(0,t))} \varepsilon +\varepsilon \norm{u_0}_{L^2}^2+C_\varepsilon \norm{u_0}_{L^2}^\gamma\\
		&+\norm{\sqrtA u_0}_{L^2}^2+\norm{ u_0}_{L^{\alpha+1}}^{\alpha+1}+\norm{Y_n}_{L^q(\Omega,L^2(0,t))}\\
		&+\norm{Y_n}_{L^q(\Omega,L^q(0,t))}+\int_0^t \norm{Y_n}_{L^q(\Omega,L^\infty(0,r))}\df r.
		\end{align*}
		If we employ Lemma \ref{LemmaYOmegaNachLzweiZeit} to estimate $\norm{Y_n}_{L^q(\Omega,L^2(0,t))}$ and $\norm{Y_n}_{L^q(\Omega,L^q(0,t))},$ we get
		\begin{align}\label{sqrtAFocusing}
		\bigNorm{\norm{\sqrtA u_n}_{L^2}^2}_{L^q(\Omega,L^\infty(0,t))}\lesssim&\,  \bigNorm{\norm{\sqrtA u_n}_{L^2}^2}_{L^q(\Omega,L^\infty(0,t))} \varepsilon+\varepsilon \norm{u_0}_{L^2}^2+C_\varepsilon \norm{u_0}_{L^2}^\gamma\nonumber\\
		&+\norm{\sqrtA u_0}_{L^2}^2+\norm{ u_0}_{L^{\alpha+1}}^{\alpha+1}+\varepsilon\norm{Y_n}_{L^q(\Omega,L^\infty(0,t))}\nonumber\\
		&+\int_0^t \norm{Y_n}_{L^q(\Omega,L^\infty(0,r))}\df r.
		\end{align}
				In order to estimate the terms with $Y_n$ by the LHS of \eqref{sqrtAFocusing}, we exploit \eqref{gagliardoFhat} to get		
				\begin{align*}
				\norm{Y_n}_{L^q(\Omega,L^\infty(0,t))}&\le  \norm{u_0}_H^{2}+\bigNorm{\norm{\sqrtA u_n}_{H}^{2}}_{L^q(\Omega,L^\infty(0,t))}+\bigNorm{\norm{ u_n}_{L^{\alpha+1}}^{\alpha+1}}_{L^q(\Omega,L^\infty(0,t))}\\
				&\le \left(1+\varepsilon \right)\bigNorm{\norm{\sqrtA u_n}_{H}^{2}}_{L^q(\Omega,L^\infty(0,t))}+	C(\varepsilon,\norm{u_0}_H).
				\end{align*}	
		Now, we choose $\varepsilon>0$ sufficiently small and end up with
		\begin{align*}
		\bigNorm{\norm{\sqrtA u_n}_{L^2}^2}_{L^q(\Omega,L^\infty(0,t))}\le C\left(1+\int_0^t \bigNorm{\norm{\sqrtA u_n}_{L^2}^2}_{L^q(\Omega,L^\infty(0,r))}\df r\right)
		\end{align*}
		for some $C=C(\norm{u_0}_{L^2},\norm{\sqrtA u_0}_\Lzwei,\norm{u_0}_{L^{\alpha+1}},\gamma,\alpha,T, F,b_\EA,b_{\alpha+1},q)$ independent of $n.$ From the Gronwall Lemma, we infer
		\begin{align}\label{prefinalEstimateFocusing}
		\bigNorm{\norm{\sqrtA u_n}_{L^2}^2}_{L^q(\Omega,L^\infty(0,t))}\le C e^{Ct},\qquad t\in [0,T].
		\end{align}

		In view of Proposition \ref{massConservationGalerkin}, we have proved the assertion for $r=2q> 4.$ The case $r\in [1,4]$ is an easy consequence of the H\"older inequality.

	\emph{ad b).} The proof of the Aldous condition is similar to the defocusing case, see Proposition \ref{EstimatesGalerkinSolution} b).
\end{proof}

\begin{corollary}\label{GalerkinTight}
	Under Assumption \ref{focusing}, the sequence $\left(u_n\right)_{n\in\N}$ of Galerkin solutions is tight on $Z_T.$
\end{corollary}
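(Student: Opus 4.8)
The plan is simply to dispatch the statement by the case distinction built into Assumption \ref{focusing}, invoking the tightness results already established for each of its two alternatives.

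First I would recall that Assumption \ref{focusing} forces either condition i) or condition i') to hold. If condition i) holds, then $F$ is defocusing and satisfies the two-sided bound \eqref{boundantiderivative}, so Proposition \ref{EstimatesGalerkinSolution} applies; its part c) asserts verbatim that $\left(\Prob^{u_n}\right)_{n\in\N}$ is tight in $Z_T$, which is exactly the claim. Internally, that part is obtained by combining part a), i.e. the uniform energy bound $\sup_{n\in\N}\E\big[\sup_{t\in[0,T]}(\norm{u_n(t)}_{H}^2+\energy(u_n(t)))^q\big]\le C$, with the mass conservation of Proposition \ref{massConservationGalerkin} and the lower bound in \eqref{boundantiderivative}, so as to deduce $\sup_{n\in\N}\E\big[\sup_{t\in[0,T]}\norm{u_n(t)}_\EA^2\big]<\infty$; together with the Aldous condition $[A]$ in $\EAdual$ from part b), Proposition \ref{TightnessCriterion} then yields the tightness.

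If instead condition i') holds, then $F$ is focusing and satisfies \eqref{boundantiderivativeFocusing1} together with the interpolation embedding \eqref{interpolationFocusing}, so Proposition \ref{EstimatesGalerkinSolutionFocusing} applies; its part a) gives $\sup_{n\in\N}\E\big[\sup_{t\in[0,T]}\norm{u_n(t)}_\EA^r\big]\le C$ for every $r\in[1,\infty)$, in particular for $r=2$, and part b) gives the Aldous condition $[A]$ in $\EAdual$, whence Proposition \ref{TightnessCriterion} again produces tightness of $\left(\Prob^{u_n}\right)_{n\in\N}$ in $Z_T$ --- precisely the ``in particular'' clause already recorded in the statement of Proposition \ref{EstimatesGalerkinSolutionFocusing}.

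In either alternative $\left(\Prob^{u_n}\right)_{n\in\N}$ is tight in $Z_T$, which is the assertion of the corollary. I do not expect any genuine obstacle here: all the substantive work has been carried out in the two preceding propositions, and the only care needed is to phrase the argument uniformly across the two sub-cases of Assumption \ref{focusing}, observing that both route through Proposition \ref{TightnessCriterion}.
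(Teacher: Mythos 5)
Your proposal is correct and coincides with the paper's own argument: the corollary is proved there exactly as an immediate consequence of Propositions \ref{EstimatesGalerkinSolution} and \ref{EstimatesGalerkinSolutionFocusing} (covering the two alternatives of Assumption \ref{focusing}) combined with the tightness criterion of Proposition \ref{TightnessCriterion}. Your extra remarks on how the uniform $\EA$-bound is extracted in each case merely make explicit what those propositions already contain.
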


\begin{proof}
	Immediate consequence of Propositions \ref{TightnessCriterion}, \ref{EstimatesGalerkinSolution} and \ref{EstimatesGalerkinSolutionFocusing}.
\end{proof} 
\section{Construction of a martingale solution}

%
%

In this section, we will use the compactness results and the uniform estimates from the previous sections to complete the proof of Theorem \ref{mainTheorem}.
Let us recall
\begin{align*}
Z_T:=\cadlagHminuseins\cap \LalphaPlusEinsAlphaPlusEins \cap \weaklyCadlag{\EA}.
\end{align*}
The first step is to prove that Proposition \ref{SkohorodJakubowski} can be applied with
\begin{align*}
\mathcal{X}_1:=\countingMeasures,\qquad \mathcal{X}_2:=Z_T.
\end{align*}
Here, $\countingMeasures$ denotes the set of all $\bar{\N}$-valued Borel measures $\xi$ on $[0,T]\times \R^N$ with $\xi(\mathcal{S}_n)<\infty$ for all $n\in \N,$ for some sequence  $\mathcal{S}_n\subset [0,T]\times \R^N$ of Borel sets with $\mathcal{S}_n \uparrow [0,T]\times \R^N$ and
$\Leb \otimes \nu(\mathcal{S}_n)<\infty$ for all $n\in\N.$
It is well known, see e.g. Lemma 2.53 in the second authors dissertation \cite{FHornungPhD} or Section 1 in \cite{budhiraja2011variational}, that $\countingMeasures$ is a complete separable metric space. 

Moreover, we determine the $\sigma$-algebra $\mathcal{A}.$ Of course, it would be natural to equip $Z_T$ with the Borel $\sigma$-algebra $\B(Z_T),$ but it turns out that $\mathcal{A}$ is strictly contained in $\B(Z_T).$
Given real-valued functions $f_m$ on a topological space $Z$, we will frequently use the notation $f=\left(f_1,f_2,\dots\right)$ and the fact that $\sigma(f_{m}: m\in\N)=f^{-1}(\B(\R^\infty)),$ where $\R^\infty$ is equipped with the locally convex topology induced by the seminorms $p_k(x):=\vert x_k\vert.$

\begin{lemma}\label{BorelSetsInCoarseTop}
	Let $X$ be a set and $f_m: X\to \R,$ $m\in\N.$ Let $\mathcal{O}_X$ be the coarsest topology such $f_m$ is continuous for all $m\in\N.$ Then, we have
	\begin{align*}
	\B(X):=\sigma(\mathcal{O}_X)=\sigma(f_m: m\in\N).
	\end{align*}
\end{lemma}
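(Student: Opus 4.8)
The plan is to prove the two inclusions $\sigma(f_m : m\in\N) \subseteq \sigma(\mathcal{O}_X)$ and $\sigma(\mathcal{O}_X) \subseteq \sigma(f_m : m\in\N)$ separately. The first inclusion is the easy direction: by definition of $\mathcal{O}_X$ every $f_m$ is continuous as a map $(X,\mathcal{O}_X) \to \R$, hence Borel measurable with respect to $\sigma(\mathcal{O}_X)$, so $f_m^{-1}(B)\in\sigma(\mathcal{O}_X)$ for every Borel set $B\subseteq\R$; since $\sigma(f_m : m\in\N)$ is by definition the smallest $\sigma$-algebra making all the $f_m$ measurable, it is contained in $\sigma(\mathcal{O}_X)$.

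For the reverse inclusion I would use the explicit description of a subbasis of the initial topology $\mathcal{O}_X$: the sets of the form $f_m^{-1}(U)$ with $m\in\N$ and $U\subseteq\R$ open form a subbasis for $\mathcal{O}_X$. Each such subbasic set lies in $\sigma(f_m : m\in\N)$ because $U$ is Borel in $\R$. A general open set $O\in\mathcal{O}_X$ is a union of finite intersections of such subbasic sets; the key point is that in a second countable situation (or, more carefully, because $\R$ is second countable and there are only countably many indices $m$) one may take this union to be \emph{countable}. Concretely, the collection of finite intersections $\bigcap_{j=1}^k f_{m_j}^{-1}(U_j)$ with $U_j$ ranging over a countable base of $\R$ (e.g.\ open intervals with rational endpoints) is itself countable and forms a base for $\mathcal{O}_X$; hence every open set in $\mathcal{O}_X$ is a countable union of members of this base, each of which lies in $\sigma(f_m : m\in\N)$. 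Therefore $\mathcal{O}_X \subseteq \sigma(f_m : m\in\N)$, and taking the generated $\sigma$-algebra gives $\sigma(\mathcal{O}_X) \subseteq \sigma(f_m : m\in\N)$.

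An alternative and perhaps cleaner route is to factor through $\R^\infty$: the map $f := (f_1,f_2,\dots) : X \to \R^\infty$ is continuous when $\R^\infty$ carries the product (equivalently, the seminorm) topology, and $\mathcal{O}_X$ is precisely the initial topology induced by $f$, i.e.\ $\mathcal{O}_X = f^{-1}(\tau_{\R^\infty})$. Since $\R^\infty$ with this topology is a separable metrizable (indeed Polish) space, its Borel $\sigma$-algebra coincides with the product $\sigma$-algebra and is countably generated, and one has $\sigma(f^{-1}(\tau_{\R^\infty})) = f^{-1}(\B(\R^\infty))$ by the standard fact that preimages commute with the $\sigma$-algebra operation. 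Combining this with the identity $\sigma(f_m : m\in\N) = f^{-1}(\B(\R^\infty))$ recalled just before the lemma yields the claim immediately.

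The only genuine subtlety — the ``main obstacle'', though it is a mild one — is that an arbitrary open set in an initial topology need not be a countable union of subbasic sets in general; here it works because $\R$ is second countable and the index set $\N$ is countable, so the generated topology is second countable. I would make sure to invoke second countability explicitly (via a countable base of $\R$) rather than glossing over the passage from ``union of finite intersections'' to ``countable union'', since that is exactly where the argument could fail for an uncountable family of functions or an uncountable-base target space.
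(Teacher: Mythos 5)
Your proposal is correct, and your ``alternative and perhaps cleaner route'' is in fact exactly the paper's proof: the paper records just before the lemma that $\sigma(f_m:m\in\N)=f^{-1}(\B(\R^\infty))$ for $f=(f_1,f_2,\dots)$, notes that the inclusion $\sigma(\mathcal{O}_X)\supset\sigma(f_m:m\in\N)$ is immediate from continuity, and for the converse writes an arbitrary $O\in\mathcal{O}_X$ as a union of finite intersections of sets $f^{-1}(O_{i,k})$ with $O_{i,k}$ open in $\R^\infty$ (citing Folland, Proposition 4.4), so that the possibly uncountable union passes through the preimage of the \emph{single} map $f$ and $O=f^{-1}\bigl(\bigcup_{i}\bigcap_{k}O_{i,k}\bigr)\in f^{-1}(\B(\R^\infty))$; no countability of the union is needed at that step. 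Your primary argument instead works with the subbasis $\{f_m^{-1}(U)\}$ and uses second countability of $\R$ together with countability of the index set to produce a countable base consisting of finite intersections of preimages of rational intervals, so that every $O\in\mathcal{O}_X$ is a countable union of sets in $\sigma(f_m:m\in\N)$. Both arguments are valid; the difference is where countability enters. In the paper it is hidden in the pre-lemma identity $\sigma(f_m:m\in\N)=f^{-1}(\B(\R^\infty))$, which requires $\B(\R^\infty)$ to coincide with the $\sigma$-algebra generated by the coordinate projections (second countability of $\R^\infty$), whereas you make the countability explicit at the level of the topology on $X$. Your route is slightly more self-contained; the paper's is shorter because the arbitrary union is absorbed by the single preimage. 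You are also right to flag the passage from ``union of finite intersections'' to membership in the $\sigma$-algebra as the only delicate point -- that is precisely the step the paper's factorization through $\R^\infty$ is designed to trivialize.
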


\begin{proof}
	The direction $"\supset"$ is obvious by the continuity of $f_{m}$ for $m\in\N.$ In view of the good set principle, it is sufficient for the other inclusion to show that each $O\in \mathcal{O}_X$ is contained in the $f^{-1}(\B(\R^\infty)).$ Since each $O\in \mathcal{O}_X$ is of the form
	\begin{align*}
	O=\bigcup_{i\in I} \bigcap_{k=1}^K f_3^{-1}(O_{i,k}),\qquad \text{$O_{i,k}$ open in $\R^\infty,$}
	\end{align*}
	see \cite{Folland}, Proposition 4.4,
	we can write represent $O$ as the inverse image of the open set $\bigcup_{i\in I} \bigcap_{k=1}^K O_{i,k}$ under the continuous function $f,$ which verifies the assertion.
\end{proof}

\begin{lemma}\label{LemmaZTSkorohodJakubowski}
	There is a countable family $F$ of real-valued continuous functions on $Z_T$ that separates points of $Z_T$ and generates the $\sigma$-algebra
	\begin{align}\label{SigmaAlgebraGeneratedByF}
	\mathcal{A}=\sigma\left(\B(Z_1\cap Z_2)|_{Z_T}\cup \sigma(F_3) \right),
	\end{align}
	where $F_3$ consists of real-valued continuous functions on $Z_3$  separating points of $Z_3.$
\end{lemma}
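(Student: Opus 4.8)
The plan is to build the family $F$ explicitly from a fixed countable set of functionals on each of the three component spaces $Z_1$, $Z_2$, $Z_3$, and then verify separately that it separates points and that it generates the stated $\sigma$-algebra. First I would recall that $Z_1=\cadlagHminuseins$ and $Z_2=\LalphaPlusEinsAlphaPlusEins$ are Polish spaces, hence second countable, so by Lemma \ref{BorelSetsInCoarseTop} their Borel $\sigma$-algebras are generated by countably many continuous real-valued functions; moreover, on a Polish space one can choose such a countable family so that it \emph{separates points} (e.g. $x\mapsto \df(x,a_k)$ for a countable dense set $(a_k)$). Call these families $F_1\subset C(Z_1;\R)$ and $F_2\subset C(Z_2;\R)$. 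For the third component, $Z_3=\weaklyCadlag{\EA}$, the hypothesis of Proposition \ref{SkohorodJakubowski} that is being checked provides, via the standard construction for the weak-c\`adl\`ag topology, a countable family $F_3\subset C(Z_3;\R)$ that separates points of $Z_3$: concretely, pick a countable set $(x_k^*)\subset \EAdual$ that separates points of $\EA$, pick a countable dense set $(t_j)\subset[0,T]$, and take the functions $u\mapsto \duality{u(t_j)}{x_k^*}$; separation of points then follows from right-continuity of $t\mapsto\duality{u(t)}{x_k^*}$. Each function in $F_1$, $F_2$, $F_3$ is continuous on $Z_T$ when restricted there, because the $Z_T$-topology is finer than the trace of each $Z_j$-topology. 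Set $F:=F_1\cup F_2\cup F_3$ (after composing with the inclusions $Z_T\hookrightarrow Z_j$), a countable family of continuous real-valued functions on $Z_T$.

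Next I would check that $F$ separates points of $Z_T$. Suppose $u,v\in Z_T$ with $f(u)=f(v)$ for all $f\in F$. Since $F_3$ separates points of $Z_3$ and $u,v$ have the same image under every element of $F_3$, we get $u=v$ already as elements of $Z_3$, i.e. as functions $[0,T]\to\EA$; in particular $u=v$ pointwise, which is certainly enough. (One could equally invoke $F_1$ alone, since $Z_1\supset Z_T$ as sets and $F_1$ separates points of $Z_1$.) So point-separation is immediate and in fact uses only one of the three subfamilies.

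The main work, and the main obstacle, is identifying $\sigma(F)$ with the $\sigma$-algebra $\mathcal{A}$ displayed in \eqref{SigmaAlgebraGeneratedByF}, namely $\sigma\bigl(\B(Z_1\cap Z_2)|_{Z_T}\cup\sigma(F_3)\bigr)$. One inclusion is clean: by construction $\sigma(F_1)\subset \B(Z_1)$ and $\sigma(F_2)\subset\B(Z_2)$, hence their traces on $Z_T$ lie in $\B(Z_1\cap Z_2)|_{Z_T}$ (using that $\B(Z_1\cap Z_2)$ contains both $\B(Z_1)|_{Z_1\cap Z_2}$ and $\B(Z_2)|_{Z_1\cap Z_2}$, as $Z_1\cap Z_2$ carries the supremum topology of two Polish topologies and is itself Polish, so its Borel $\sigma$-algebra is generated by that union), and of course $\sigma(F_3)\subset\sigma(F_3)$; therefore $\sigma(F)\subset\mathcal{A}$. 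For the reverse inclusion $\mathcal{A}\subset\sigma(F)$ one must show $\B(Z_1\cap Z_2)|_{Z_T}\subset\sigma(F)$ and $\sigma(F_3)\subset\sigma(F)$; the latter is trivial since $F_3\subset F$. For the former, the key point is that on the Polish space $Z_1\cap Z_2$ (supremum topology) Lemma \ref{BorelSetsInCoarseTop} applies to the countable family $F_1\cup F_2$: these functions are continuous on $Z_1\cap Z_2$ and, since the $Z_j$-metrics are recovered from $F_j$ via $\df_j(x,y)=\sup_k 2^{-k}\min(1,|f_k(x)-f_k(y)|)$ up to topological equivalence, $F_1\cup F_2$ generates the supremum topology's Borel sets; hence $\B(Z_1\cap Z_2)=\sigma(F_1\cup F_2)$, whose trace on $Z_T$ is contained in $\sigma(F_1\cup F_2)|_{Z_T}\subset\sigma(F)$. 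Combining, $\mathcal{A}\subset\sigma(F)$, and with the earlier inclusion we conclude $\sigma(F)=\mathcal{A}$. The delicate step to get right is precisely the claim $\B(Z_1\cap Z_2)=\sigma(\B(Z_1)|_{Z_1\cap Z_2}\cup\B(Z_2)|_{Z_1\cap Z_2})$ for the supremum topology, which is where second countability of both factors is essential and where an appeal to Lemma \ref{BorelSetsInCoarseTop} (applied to the concatenated countable separating families) does the job cleanly.
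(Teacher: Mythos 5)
Your argument is correct and has the same skeleton as the paper's proof: a countable family $F=F_1\cup F_2\cup F_3$, one subfamily per factor, restricted to $Z_T$, point separation read off from one subfamily, and the identity \eqref{SigmaAlgebraGeneratedByF} obtained by showing $\B(Z_1\cap Z_2)$ is generated by $F_1\cup F_2$ and then passing to traces. Where you genuinely differ is in the choice of $F_1,F_2$ and the input used to identify the generated $\sigma$-algebras: the paper takes evaluation functionals $\Real\duality{u(t_l)}{\varphi_k}$ on $Z_1=\D([0,T],\EAdual)$ and invokes Jakubowski's description of the Borel $\sigma$-algebra of the Skorokhod space (\cite{JakubowskiSkorohodTopology}, Corollary 2.4), plus a Hahn--Banach argument for $Z_2$; you take distance functions $x\mapsto d_j(x,a_k)$ to countable dense sets, which generate the Polish topologies of $Z_1,Z_2$, so Lemma \ref{BorelSetsInCoarseTop} yields $\sigma(F_j)=\B(Z_j)$ at once. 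This is more elementary and more general (it works for any Polish factor, and it avoids relying on continuity of point evaluations on the Skorokhod space), and it also gives you a genuine proof of the step the paper only asserts, namely $\B(Z_1\cap Z_2)=\sigma\bigl(\B(Z_1)|_{Z_1\cap Z_2}\cup\B(Z_2)|_{Z_1\cap Z_2}\bigr)$: since initial topologies are compatible with restriction to subsets, the initial topology of $F_1\cup F_2$ on $Z_1\cap Z_2$ is exactly the supremum topology, and Lemma \ref{BorelSetsInCoarseTop} applies; this is precisely where second countability enters, as you say. The cost is only that your $F_1,F_2$ are less explicit, which is harmless: the subsequent arguments use nothing beyond countability, point separation and the identity \eqref{SigmaAlgebraGeneratedByF}, all of which your $F$ provides. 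Two cosmetic touch-ups: the pairings $u\mapsto\duality{u(t_j)}{x_k^*}$ in $F_3$ are complex-valued since $\EA$ is a complex space, so take real parts against a countable dense subset of $\EAdual$ (as the paper does) to get real-valued functions that still separate points of $Z_3$; and your parenthetical metric formula for recovering $d_j$ from $F_j$ is unnecessary — the statement you actually need is that the initial topology of $\{d_j(\cdot,a_k)\}_k$ coincides with the metric topology, which holds because the balls centred at a dense set form a basis.
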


\begin{proof}
	
	\emph{Step 1.} For each $Z_i$, we give a sequence  $\left(f_{m,i}\right)_{m\in\N}$ of continuous functions $f_{m,i}: Z_i\to \R$ separating points and determine the generated $\sigma$-algebras.
	
	Let $\left\{\varphi_k: k\in \N \right\}$ be a sequence with $\norm{\varphi_k}_{\EA}\le 1$ and $\norm{x}_{\EAdual}=\sup_{k\in\N}\vert\Real \duality{x}{\varphi_k}\vert$ for all $x\in \EAdual$
	and $\left\{t_l: l\in \N \right\}$ be dense in $[0,T].$  We set
	\begin{align*}
	f_{k,l,1}(u):=\Real \duality{u(t_l)}{\varphi_k},\qquad u\in Z_1,\quad k,l\in\N
	\end{align*}
	and for $n\in\N,$  we denote
	\begin{align*}
	\pi_{t_1,\dots,t_n}: Z_1\to (\EAdual)^n,\qquad u\mapsto \left(u(t_1),\dots, u(t_n)\right).
	\end{align*}
	From \cite{JakubowskiSkorohodTopology}, Corollary 2.4, we know that
	\begin{align*}
	\B(Z_1)=\sigma(\pi_{t_1,\dots,t_n}:n\in\N).
	\end{align*}
	But since $\pi_{t_1,\dots,t_n}$ is strongly measurable in $(\EAdual)^n$ if and only if
	\begin{align*}
	Z_1\ni u\mapsto \Real \duality{\pi_{t_1,\dots,t_n}(u)}{(\varphi_{k_1},\dots, \varphi_{k_n})}_{(\EAdual)^n,(\EA)^n}=\sum_{j=1}^{n} f_{k_j,j,1}(u)
	\end{align*}
	for all $k_1,\dots,k_n\in \N,$
	we obtain $\B(Z_1)=\sigma(f_{k,l}:k\in\N,l\in\N).$ By right-continuity and the choice of $\varphi_k,$ $k\in\N,$ the $f_{k,l}$ separate points in $Z_1$ and they are continuous since convergence in $Z_1$ implies pointwise convergence.
	
	The existence of $\left(f_{m,2}\right)_{m\in\N}$ is a consequence of the Hahn-Banach-Theorem in $Z_2.$ For the details, we refer to \cite{FHornungPhD}, Lemma 2.28. 
	Let $\left\{h_k: k\in \N \right\}$ and $\left\{t_l: l\in \N \right\}$ be dense subsets of $\EAdual$ and $[0,T],$ respectively. We set
	\begin{align*}
	f_{k,l,3}(u):=\Real \duality{u(t_l)}{h_k},\qquad u\in Z_3,\quad k,l\in\N.
	\end{align*}
	and denote the enumeration of $\left(f_{k,l,3}\right)_{k,l\in\N}$ by $\left(f_{m,3}\right)_{m\in\N}.$ By the definition of the topology in $Z_3$ and the fact that convergence in $\D([0,T])$ implies pointwise convergence, we obtain that $f_{m,3}$ is continuous. Suppose that $f_{m,3}(u_1)=f_{m,3}(u_2)$ for all $u_1,u_2\in Z_3.$ From the right-continuity of $[0,T]\ni t\mapsto \Real \duality{u_j(t)}{h_k}$ and the density of $\left(t_l\right)_l$ $\left(h_k\right)_k$, we infer $u_1(t)=u_2(t)$ for all $t\in \N,$ i.e. $\left(f_{m,3}\right)_{m\in\N}$ separates points in $Z_3.$

%
%
	
	\emph{Step 2.} We define $F_j:=\left\{f_{m,j}|_{Z_T}: m\in\N\right\}$ and set $\mathcal{A}:= \sigma(F),$ where $F:=F_1\cup F_2\cup F_3.$
	%
	We would like to prove \eqref{SigmaAlgebraGeneratedByF}.
	Above, we obtained $\sigma(f_{m,j}:m\in\N)=\B(Z_j)$ for $j=1,2.$ Since we have
	\begin{align*}
	\sigma\left(f_{m,j}|_{Z_1\cap Z_2}:m\in\N\right)=\sigma(f_{m,j}:m\in\N)|_{Z_1\cap Z_2}
	\end{align*}
	and
	\begin{align*}
	\B(Z_1\cap Z_2)=\sigma\left(\bigcup_{j=1,2}\B(Z_j)|_{Z_1\cap Z_2}\right),
	\end{align*}
	we conclude
	\begin{align*}
	\B(Z_1\cap Z_2)&=\sigma\left(\bigcup_{j=1,2}\sigma\left(f_{m,j}|_{Z_1\cap Z_2}:m\in\N\right)\right)
	\end{align*}
	and thus,
	\begin{align*}
	\B(Z_1\cap Z_2)|_{Z_T}
	&=\sigma\left(f_{m,1}|_{Z_T},f_{m,2}|_{Z_T}:m\in\N\right)=\sigma(F_1\cup F_2).
	\end{align*}
	Similarly, we obtain $\mathcal{A}=\sigma\left(\B(Z_1\cap Z_2)|_{Z_T}\cup \sigma(F_3) \right).$
	
\end{proof}

\begin{remark}
	By Lemma $\ref{BorelSetsInCoarseTop},$ we have $\sigma\left(f_{m,3}:m\in\N\right)=\sigma(\tilde{\mathcal{O}}_{Z_3}),$ where $\tilde{\mathcal{O}}_{Z_3}$ is the coarsest topology such that $f_{m,3}$ is continuous for all $m\in\N.$ In particular, we have
	\begin{align*}
	\sigma\left(f_{m,3}:m\in\N\right)\subsetneq \B(Z_3),
	\end{align*}
	since convergence in $\D([0,T])$ implies pointwise convergence, but not vice versa. In particular, we would get $\mathcal{A}=\B(\tilde{Z}_T)$ where $\tilde{Z}_T$ is the topological space arising when we replace the topology on $Z_3$ by $\tilde{\mathcal{O}}_{Z_3}.$
\end{remark}

By the previous Lemma and the uniform estimates from Propositions \ref{EstimatesGalerkinSolution} and \ref{EstimatesGalerkinSolutionFocusing}, we can apply Proposition \ref{SkohorodJakubowski} to the sequence $\left(u_n\right)_{n\in\N}$ of Galerkin solutions. As a result, we obtain a candidate $v$ for the martingale solution.

\begin{corollary}\label{cor-main} Let $\left(u_n\right)_{n\in\N}$ be the sequence of solutions to the Galerkin equation $\eqref{marcusGalerkin}$ on $(\Omega,\F,\Prob)$ and $\mathcal{A}$ be the $\sigma$-algebra on $Z_T$ defined in \eqref{SigmaAlgebraGeneratedByF}.
	\begin{itemize}
		\item[a)] There are a probability space $(\bar{\Omega},\bar{\F},\tildeProb),$ a subsequence $\left(u_{n_k}\right)_{k\in\N}$ and random variables $v, v_k: \bar{\Omega} \to Z_T$ and  $\bar{\eta}_k, \bar{\eta}: \bar{\Omega}\to \countingMeasures$ with
		\begin{enumerate}
			\item[i)] $\tildeProb^{(\bar{\eta}_k,v_k)}=\Prob^{(\eta,u_{n_k})}$ for $k\in\N,$
			\item[ii)] $(\bar{\eta}_k,v_k) \to \left(\bar{\eta},v\right)$ in $\countingMeasures\times Z_T$ almost surely for $k\to \infty,$
			\item[iii)] $\bar{\eta}_k=\bar{\eta}$ almost surely.
		\end{enumerate}
		Moreover, $\bar{\eta}_k, \bar{\eta}$ are time-homogeneous Poisson random measures on $[0,T]\times \R^N$ with intensity measure $\Leb\otimes \nu.$ w.r.t to the filtration $\bar{\Filtration}$ defined by the augmentation of
		\begin{align*}
		\bar{\F}_t:=\sigma\left(\bar{\eta}_k(s), v_m(s), v(s):  k\in\N,m\in\N, s\in [0,t]\right),
		\end{align*}
		where by the notation ${\bar\eta_k}(s)$ we mean all random variables of the form $
{\bar\eta_k}((0,s] \times B_1)$,
where $B_1$ is a measurable set in $B$.
		\item[b)]  We have $v_k \in  \D\left([0,T],H_k\right)$ $\bar{\Prob}$-a.s. and for all $r\in [1,\infty),$ there is
		$C=C( T, \norm{u_0}_\EA,r)>0$
		with
		\begin{align*}
		\sup_{k\in\N} \Etilde \left[ \norm{v_k}_\LinftyHeins^r\right]\le C.
		\end{align*}
		\item[c)] For all $r\in [1,\infty),$ we have
		\begin{align*}
		\Etilde \left[ \norm{v}_\LinftyHeins^r\right]\le C
		\end{align*}
		with the same constant $C>0$ as in $b).$
	\end{itemize}
	
\end{corollary}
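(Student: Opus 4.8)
The plan is to derive all three parts from the Skorokhod--Jakubowski Proposition \ref{SkohorodJakubowski}, fed by the tightness of Corollary \ref{GalerkinTight} and the uniform moment bounds of Propositions \ref{EstimatesGalerkinSolution} and \ref{EstimatesGalerkinSolutionFocusing}.

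\emph{Part a).} By construction each $u_n$ is, by Lemma \ref{LemmaZTSkorohodJakubowski}, an $\mathcal{A}$-measurable $Z_T$-valued random variable, so $\chi_n:=(\eta,u_n)$ is a random variable in $\bigl(\countingMeasures\times Z_T,\ \B(\countingMeasures)\otimes\mathcal{A}\bigr)$. The law of $u_n$ is tight in $Z_T$ by Corollary \ref{GalerkinTight}, while the law of $\eta$ is a single measure on the complete separable metric space $\countingMeasures$ and hence tight; a finite product of tight families being tight, $(\Prob^{\chi_n})_n$ is tight on $\countingMeasures\times Z_T$, and trivially $\Prob^{\pi_1\circ\chi_n}=\Prob^{\eta}$. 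Proposition \ref{SkohorodJakubowski}, applied with $\mathcal{X}_1=\countingMeasures$, $\mathcal{X}_2=Z_T$ and the separating family $F$ of Lemma \ref{LemmaZTSkorohodJakubowski}, then produces the subsequence $(u_{n_k})_k$, the probability space $(\bar\Omega,\bar\F,\tildeProb)$ and the random variables $(\bar{\eta}_k,v_k)$, $(\bar{\eta},v)$ satisfying i)--iii). That each $\bar{\eta}_k$ and $\bar{\eta}$ is a time-homogeneous Poisson random measure with intensity $\Leb\otimes\nu$ is a property of its law on $\countingMeasures$, hence inherited from $\eta$ via i) and iii); that it is moreover a Poisson random measure with respect to the concrete filtration $\bar{\Filtration}$ is obtained by the usual characteristic-functional argument along the almost surely convergent sequence (carried out in Appendix \ref{PRM}), using the adaptedness of $\bar{\eta}_k,v_k,v$ built into the definition of $\bar{\F}_t$.

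\emph{Part b).} Since the event $\{z\in Z_T:\ z\in\D([0,T],H_{n_k})\}$ is $\mathcal{A}$-measurable — it is cut out by the vanishing of $\duality{z(t_l)}{\psi}$ on $H_{n_k}^{\perp}$ for $t_l$ in a countable dense subset of $[0,T]$, using right-continuity — and carries full $\Prob$-mass for $u_{n_k}$ by Proposition \ref{massConservationGalerkin}, property i) forces $v_k\in\D([0,T],H_{n_k})$ $\tildeProb$-a.s., which is the assertion up to relabeling the subsequence. For the moment bound, fix (by separability of $\EAdual$) a countable family $(\psi_j)_j\subset\EAdual$ with $\norm{\psi_j}_{\EAdual}\le1$ norming $\EA$; then $z\mapsto\norm{z}_{\LinftyHeins}=\sup_{j,l}\bigl|\Real\duality{z(t_l)}{\psi_j}\bigr|$ is $\mathcal{A}$-measurable and lower semicontinuous on $\weaklyCadlag{\EA}$ (this uses weak right-continuity and the existence of weak left limits). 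Hence i) gives $\Etilde[\norm{v_k}_{\LinftyHeins}^r]=\E[\norm{u_{n_k}}_{\LinftyHeins}^r]$, and the right-hand side is bounded uniformly in $k$ by combining the mass conservation $\norm{u_{n_k}(t)}_H=\norm{u_0}_H$ of Proposition \ref{massConservationGalerkin} with Proposition \ref{EstimatesGalerkinSolution} a) in the defocusing, and Proposition \ref{EstimatesGalerkinSolutionFocusing} a) in the focusing case; this produces $C=C(T,\norm{u_0}_{\EA},r)$.

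\emph{Part c).} By ii), $v_k\to v$ $\tildeProb$-a.s.\ in $Z_T$, in particular in $\weaklyCadlag{\EA}$, so the lower semicontinuity of $z\mapsto\norm{z}_{\LinftyHeins}$ noted in Part b), together with Lemma \ref{lem-boundednessWeaklyCadlag} (which ensures the limit inferior below is finite), gives $\norm{v}_{\LinftyHeins}\le\liminf_{k\to\infty}\norm{v_k}_{\LinftyHeins}$ $\tildeProb$-a.s. Fatou's lemma and Part b) then yield $\Etilde[\norm{v}_{\LinftyHeins}^r]\le\liminf_k\Etilde[\norm{v_k}_{\LinftyHeins}^r]\le C$ with the same constant. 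The genuinely delicate step is the last assertion of Part a): the Skorokhod--Jakubowski theorem only transports \emph{laws}, and upgrading ``$\bar{\eta}$ has the law of a Poisson random measure'' to ``$\bar{\eta}$ is a Poisson random measure over $\bar{\Filtration}$'' — i.e.\ that its increments over $(s,t]$ are $\bar{\F}_s$-independent and correctly distributed — requires the separate argument deferred to Appendix \ref{PRM}; the measurability and lower-semicontinuity points in Parts b) and c) are routine once the countable separating family of Lemma \ref{LemmaZTSkorohodJakubowski} is available.
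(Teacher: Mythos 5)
Your proposal follows the paper's proof essentially verbatim: part a) is the same application of Proposition \ref{SkohorodJakubowski} with $\mathcal{X}_1=\countingMeasures$, $\mathcal{X}_2=Z_T$, tightness coming from Corollary \ref{GalerkinTight} together with tightness of the single law of $\eta$ on a Polish space, and parts b), c) are the same transfer of the uniform bounds of Propositions \ref{EstimatesGalerkinSolution} and \ref{EstimatesGalerkinSolutionFocusing} via equality of laws, followed by lower semicontinuity of the $\LinftyHeins$-norm and Fatou along the a.s.\ convergence (the paper delegates c) to Proposition 6.1 c) of \cite{ExistencePaper}). The only inaccuracy is your claim that upgrading $\bar{\eta}$ to a time-homogeneous Poisson random measure with respect to $\bar{\Filtration}$ is ``carried out in Appendix \ref{PRM}'': that appendix only contains definitions, and the paper handles this step by citing \cite{BrzezniakHausenblasReactionDiffusion}, Section 8, Step III, which is precisely the characteristic-functional argument you sketch.
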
	

\begin{remark}
	The fact that for each $n\in\N$, $u_n$ is an  $(Z_T,\mathcal{A})$-valued  random variable   is true since $\D([0,T],H_n)\subset Z_j$ for each $n\in\N$ and each $j=1,2,3$, see \eqref{eqn-Z_T} for the definition of the spaces $Z_j$,  with continuity of the canonical embedding. In particular
	\begin{align*}
		\left\{B\cap \D([0,T],H_n): B\in \B(Z_T)\right\}&=\sigma\left(\left\{B\cap \D([0,T],H_n): \text{$B$ closed in $Z_T$}\right\}\right)\\
		&\subset \sigma(\left\{\tilde{B}: \text{$\tilde{B}$ closed in $\D([0,T],H_n)$}\right\})=\B(\D([0,T],H_n)).
	\end{align*}
	Since $u_n$ is random variable in $\D([0,T],H_n),$ we infer that
	\begin{align*}
		\left\{u_n\in B\right\}=\left\{u_n\in B\cap \D([0,T],H_n)\right\}\in \F
	\end{align*}
	for all $B\in \mathcal{A}.$
\end{remark}

\begin{proof}
	\emph{ ad a).}
	We apply Proposition \ref{SkohorodJakubowski} with
	\begin{align*}
	\mathcal{X}_1:=\countingMeasures, \qquad \mathcal{X}_2:=Z_T
	\end{align*}
	and $\chi_n=\left(\eta, u_n\right),$ $n\in\N.$ The tightness of $\chi_n$ is guaranteed by Corollary \ref{GalerkinTight} and the fact that random variables on metric spaces are tight, see \cite{parthasaraty}, Theorem 3.2. In Lemma \ref{LemmaZTSkorohodJakubowski}, we have checked that $Z_T$ fulfills the assumptions of Proposition \ref{SkohorodJakubowski} with the $\sigma$-algebra $\mathcal{A}$ from above. 	
	For the proof of the last assertion, we refer to \cite{BrzezniakHausenblasReactionDiffusion}, Section 8, Step III.

	\emph{ad b).}  Since $\D\left([0,T],H_k\right)$ is contained in $Z_j$ for $j=1,\dots, 4,$   the definition of $\mathcal{A}$ yields that $\D\left([0,T],H_k\right)\in \mathcal{A}.$  Hence, we obtain $v_k \in  \D\left([0,T],H_k\right)$ $\bar{\Prob}$-a.s. as an immediate consequence of the identity of the laws of $v_k$ and $u_{n_k}.$

	 The uniform estimate follows from the respective estimates for $\left(u_{n_k}\right)_{k\in\N},$ see Propositions \ref{EstimatesGalerkinSolution} and \ref{EstimatesGalerkinSolutionFocusing}, via the identity of laws, since $\D\left([0,T],H_k\right)\ni w\mapsto \sup_{t\in [0,T]}\norm{w(t)}_\EA$ is a measurable function.
	
	\emph{ad c).} We can follow the lines of the proof of Proposition 6.1 c) in \cite{ExistencePaper}.
\end{proof}	

\begin{corollary}\label{cor-main2} In the framework of Corollary \ref{cor-main}, we have
 $\bar{\mathbb{P}}$-almost surely, for each $k\in \mathbb{N}$, 
\[
	\norm{v_k(t)}_H=\norm{u_0}_H \mbox{ for all } t\in [0,T]. 
\]
\end{corollary}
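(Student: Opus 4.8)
The plan is to transfer the mass conservation from the Galerkin solutions $u_{n_k}$ to the limit processes $v_k$ through the identity of laws $\tildeProb^{(\bar\eta_k,v_k)}=\Prob^{(\eta,u_{n_k})}$ of Corollary \ref{cor-main}(i); the only genuine issue is that the event in question must be shown to lie in the $\sigma$-algebra $\mathcal A$ on $Z_T$ (which is strictly coarser than the topological Borel $\sigma$-algebra), and not merely in $\B(Z_T)$.

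Fix a countable dense set $D\subset[0,T]$ and recall from Lemma \ref{spaceLemma}(c) the orthonormal basis $(h_j)_{j\in\N}$ of $H$. Each $h_j$ is an eigenvector of $S$, hence lies in $\mathcal D(S^k)\hookrightarrow\EA$ for the $k$ of Assumption \ref{spaceAssumptions}; thus $\duality{h_j}{\cdot}$ is a bounded linear functional on $\EAdual$, and $\sum_{j}|\duality{h_j}{y}|^2=\norm y_H^2$ whenever $y\in H$ (the series being $+\infty$ otherwise). I would then introduce
\begin{align*}
\mathcal N:=\Big\{w\in Z_T:\ \sum_{j=1}^\infty\big|\duality{h_j}{w(t)}\big|^2=\norm{u_0}_H^2\ \text{ for all }t\in D\Big\}.
\end{align*}
For fixed $t$ the coordinate map $w\mapsto w(t)$ is Borel on $Z_1=\cadlagHminuseins$, and $y\mapsto|\duality{h_j}{y}|^2$ is continuous on $\EAdual$; hence each partial sum $w\mapsto\sum_{j=1}^N|\duality{h_j}{w(t)}|^2$ is $\B(Z_1)$-measurable, and passing to the supremum over $N\in\N$ and the countable intersection over $t\in D$ yields $\mathcal N=\tilde{\mathcal N}\cap Z_T$ with $\tilde{\mathcal N}\in\B(Z_1)$. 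Since the inclusion $Z_1\cap Z_2\hookrightarrow Z_1$ is continuous, $\tilde{\mathcal N}\cap(Z_1\cap Z_2)\in\B(Z_1\cap Z_2)$, and therefore $\mathcal N\in\B(Z_1\cap Z_2)|_{Z_T}=\sigma(F_1\cup F_2)\subset\mathcal A$, the last two relations being exactly what is proved in Lemma \ref{LemmaZTSkorohodJakubowski}.

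Next I would compute $\Prob(u_{n_k}\in\mathcal N)=1$: by Proposition \ref{massConservationGalerkin}, $u_{n_k}$ takes values in $\D([0,T],H_{n_k})$ and $\norm{u_{n_k}(t)}_H=\norm{u_0}_H$ $\Prob$-a.s. for every $t\in[0,T]$, hence, by countability of $D$, simultaneously for all $t\in D$ off a $\Prob$-null set, where then $\sum_j|\duality{h_j}{u_{n_k}(t)}|^2=\norm{u_{n_k}(t)}_H^2=\norm{u_0}_H^2$. By Corollary \ref{cor-main}(i) and $\mathcal N\in\mathcal A$ this gives $\tildeProb(v_k\in\mathcal N)=\Prob(u_{n_k}\in\mathcal N)=1$ for every $k$, hence simultaneously for all $k\in\N$ up to a $\tildeProb$-null set. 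Finally, on the $\tildeProb$-full event $\{v_k\in\D([0,T],H_k)\}$ of Corollary \ref{cor-main}(b), the norms $\norm\cdot_H$ and $\norm\cdot_{\EAdual}$ are equivalent on the finite-dimensional space $H_k$, so $t\mapsto v_k(t)$ is right-continuous in $H$ and $\sum_j|\duality{h_j}{v_k(t)}|^2=\norm{v_k(t)}_H^2$ for every $t$; thus $v_k\in\mathcal N$ forces $\norm{v_k(t)}_H=\norm{u_0}_H$ for all $t\in D$, and density of $D$ together with right-continuity of $t\mapsto\norm{v_k(t)}_H$ extends this to all $t\in[0,T]$, which is the claim.

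The step I expect to be the main obstacle is the measurability bookkeeping for $\mathcal N$: because $\mathcal A\subsetneq\B(Z_T)$ one cannot simply invoke equality of laws for the topologically Borel event $\{w:\norm{w(t)}_H\equiv\norm{u_0}_H\}$, and the role of the basis pairings $\duality{h_j}{\cdot}$ is precisely to realize that event — up to the almost sure restriction $v_k\in\D([0,T],H_k)$ — inside a sub-$\sigma$-algebra of $\mathcal A$. Everything else is a routine transfer-of-law argument.
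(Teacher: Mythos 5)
Your proof is correct, and it rests on the same basic mechanism as the paper's — transferring the mass identity of Proposition \ref{massConservationGalerkin} through the equality of laws in Corollary \ref{cor-main} — but the key technical step is handled by a genuinely different device. The paper takes the set $\mathcal{S}=\bigl\{u\in\D([0,T],H_{n_k}):\norm{u(t)}_H=\norm{u_0}_H \mbox{ for all } t\in[0,T]\bigr\}$ and shows it is closed in $\D([0,T],H_{n_k})$ via the elementary Appendix results (Lemma \ref{lem-App C-01} and Corollary \ref{cor-App C-02}: a \cadlag limit of constant functions is constant), so the transferred event already encodes all times at once; you instead manufacture an explicitly $\mathcal{A}$-measurable event $\mathcal N$ from countably many time evaluations and Parseval sums of the pairings $\duality{h_j}{\cdot}$, transfer its probability, and then upgrade from the dense set $D$ to all of $[0,T]$ by right-continuity on the a.s.\ event $\{v_k\in\D([0,T],H_{n_k})\}$. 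What your route buys is a head-on treatment of the fact that $\mathcal{A}$ may be strictly smaller than $\B(Z_T)$: you place $\mathcal N$ inside $\B(Z_1\cap Z_2)|_{Z_T}=\sigma(F_1\cup F_2)\subset\mathcal{A}$ by construction, whereas the paper's assertion that $\mathcal{S}$ is Borel in $Z_T$ leans on the continuity of the embedding $\D([0,T],H_{n_k})\hookrightarrow Z_T$ discussed in the Remark following Corollary \ref{cor-main}; the paper's route is in turn shorter, needs no dense time grid, and its topological lemma is reused in Step 2 of the proof of Theorem \ref{mainTheorem} for the limit process $v$, where your finite-dimensionality argument is no longer available. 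Two cosmetic points: choose $D$ to contain $T$, since right-continuity alone does not reach the right endpoint, and note that your parenthetical claim that $\sum_j\vert\duality{h_j}{y}\vert^2=+\infty$ for $y\in\EAdual\setminus H$ requires the density of $\operatorname{span}\{h_j\}$ in $\EA$ (available, e.g., from $S_n\psi\to\psi$ in $\EA$), although it is never actually used because $v_k$ takes values in the finite-dimensional space $H_{n_k}$ almost surely.
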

\begin{proof}
Let us fix $k\in \mathbb{N}.$ Then, the set 
\begin{align*}
	\mathcal{S}=\bigl\{u \in \mathbb{D}([0,T],H_{n_k}): \norm{u(t)}_H=\norm{u_0}_H \mbox{ for all } t\in [0,T] \bigr\}
\end{align*}
is closed in $\mathbb{D}([0,T],H_{n_k})$ by Corollary \ref{cor-App C-02}. Therefore, $\mathcal{S}$ is a Borel set in $Z_T.$  By Corollary \ref{cor-main}, the laws of $v_k$ and $u_{n_k}$ are equal.  Since by Proposition \ref{massConservationGalerkin} the law of $u_{n_k}$  is concentrated on $\mathcal{S},$ so is the law of $v_k$. The proof is thus complete.
\end{proof}

It remains to show that $\left(\bar{\Omega},\bar{\F},\bar{\Prob},\bar{\eta},\bar{\Filtration},u\right)$ is indeed martingale solution. The compensated Poisson random measure induced by $\bar{\eta}$ is denoted by
$\tilde{\bar{\eta}}:=\bar{\eta}-\Leb \otimes\nu.$
 We need the following convergence results.

\begin{lemma}\label{LemmaConvergences}
	Let $\psi\in \EA.$ Then, we have the following convergences in  $L^2(\bar{\Omega}\times [0,T])$ as $n\to \infty:$
	\begin{align}\label{convergenceNonIntegralTerms}
		\Real \skpH{v_n-\widetilde{u_{0,n}}}{\psi}\to \Real \skpH{v-u_0}{\psi}
	\end{align}
	\begin{align}\label{convergenceDeterministicTerm}
		\int_0^\cdot\Real \skpH{ A v_n(s)+P_n F(v_n(s))}{\psi}\df s \to \int_0^\cdot\Real \duality{ A v(s)+ F(v(s))}{\psi}\df s;
		\end{align}
		\begin{align}\label{convergenceNoiseTerm}
		&\int_0^\cdot \int_{\left\{\vert l \vert\le 1\right\}} \!\Real \skpH{\groupBn v_n(s-) - v_n(s-)}{\psi}\, \tilde{\bar{\eta}}(\df s,\df l)\nonumber\\&\qquad\to \int_0^\cdot \int_{\left\{\vert l \vert\le 1\right\}} \!\Real \skpH{\groupB v(s-) - v(s-)}{\psi} \tilde{\bar{\eta}}(\df s,\df l);
		\end{align}
		\begin{align}\label{convergenceMarcusTerm}
			 \int_{0}^{\cdot}\! \int_{\left\{\vert l \vert\le 1\right\}} &\! \Real \skpH{\groupBn v_n(s) - v_n(s) + \im \Bn(l) v_n(s)}{\psi} \nu(\df l)\df s\nonumber\\&\hspace{4cm}\to \int_{0}^{\cdot}\! \int_{\left\{\vert l \vert\le 1\right\}} \! \Real \skpH{ \groupB v(s) - v(s) + \im \B(l) v(s)}{\psi} \nu(\df l)\df s.
		\end{align}	
\end{lemma}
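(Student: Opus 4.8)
The plan is to prove the four convergences by combining the almost sure convergence $(\bar\eta_k,v_k)\to(\bar\eta,v)$ in $\countingMeasures\times Z_T$ from Corollary \ref{cor-main} with the uniform moment bounds $\sup_k \Etilde[\norm{v_k}_{\LInftyHeins}^r]\le C$ (for all $r\in[1,\infty)$) to upgrade a.s.\ convergence to $L^2(\bar\Omega\times[0,T])$ convergence via a Vitali-type argument (uniform integrability from the $L^r$-bound with $r>2$). For each of the four terms I would first identify the weakest topology in which convergence of $v_n$ holds, push it through the (continuous, or continuous-on-bounded-sets) functionals appearing, and then invoke uniform integrability to conclude $L^2$ convergence; the $\omega$-by-$\omega$ convergence of the time integrals will in every case follow from dominated convergence on $[0,T]$ using the $Z_T$-convergence and the uniform bound $\norm{v_n(s)}_\EA\le \norm{v_n}_{\LInftyHeins}$.

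\emph{Step 1 (the non-integral term \eqref{convergenceNonIntegralTerms}).} Since $v_n\to v$ in $\weaklyCadlag{\EA}$, we have $\skpH{v_n(t)}{\psi}=\duality{v_n(t)}{\psi}\to\duality{v(t)}{\psi}$ at every continuity point $t$ of the limit, hence for a.e.\ $t\in[0,T]$; combined with $\widetilde{u_{0,n}}\to u_0$ in $H$ from \eqref{initialValueConvergence}, this gives a.e.\ pointwise convergence in $t$ and $\tildeProb$-a.s. The $\EA$-bound of Corollary \ref{cor-main} and $\norm{\psi}_\EA<\infty$ give an $L^r(\bar\Omega\times[0,T])$-bound with $r>2$, so Vitali yields \eqref{convergenceNonIntegralTerms}.

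\emph{Step 2 (the deterministic term \eqref{convergenceDeterministicTerm}).} Write the integrand as $\Real\duality{Av_n(s)}{\psi}+\Real\skpH{P_nF(v_n(s))}{\psi}$. For the linear part, $\Real\duality{Av_n(s)}{\psi}=\Real\skpH{\sqrtA v_n(s)}{\sqrtA\psi}$ converges for a.e.\ $s$ by the $\weaklyCadlag{\EA}$-convergence (testing against the fixed element $A\psi\in\EAdual$, or using weak-$\EA$ convergence). For the nonlinear part, use $P_n\psi\to\psi$ in $\EA$ and the self-adjointness relation $\skpH{P_nF(v_n(s))}{\psi}=\duality{F(v_n(s))}{P_n\psi}$; since $v_n\to v$ in $\LalphaPlusEinsAlphaPlusEins$, along a further subsequence $v_n(s)\to v(s)$ in $\LalphaPlusEins$ for a.e.\ $s$, and the local Lipschitz bound \eqref{nonlinearityLocallyLipschitz} together with the uniform $\EA\hookrightarrow\LalphaPlusEins$ bound gives $F(v_n(s))\to F(v(s))$ in $\LalphaPlusEinsDual$; then $\duality{F(v_n(s))}{P_n\psi}\to\duality{F(v(s))}{\psi}$. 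Integrating in $s$ and using $\norm{F(v_n(s))}_{\LalphaPlusEinsDual}\lesssim\norm{v_n(s)}_\EA^\alpha$ for domination, plus the $L^r(\bar\Omega)$-bound on $\sup_s\norm{v_n(s)}_\EA$, Vitali again gives convergence in $L^2(\bar\Omega\times[0,T])$. A subsequence-subsequence argument removes the passage to subsequences since the limit is identified.

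\emph{Step 3 (the stochastic integral \eqref{convergenceNoiseTerm}).} Here I would use the Itô isometry for the compensated Poisson integral: the $L^2(\bar\Omega)$-norm of the difference of the two stochastic integrals is controlled by $\Etilde\int_0^T\int_B |\Real\skpH{(\groupBn v_n - v_n)-(\groupB v - v)}{\psi}|^2\,\nu(\df l)\df s$. Split $\groupBn v_n - \groupB v = (\groupBn - \groupB)v_n + \groupB(v_n - v)$; for the first summand use $S_n\to\Id$ strongly on $H$ together with Lemma \ref{BnProperties}/\ref{AldousDifferences} and the bound $\norm{\groupBn x - x}_H\le b_H^{1/2}|l|\norm{x}_H$ to get a $\nu$-integrable dominating function $|l|^2\norm{v_n(s)}_H^2\norm{\psi}_H^2$ with $\norm{v_n(s)}_H=\norm{u_0}_H$ by Corollary \ref{cor-main2}, and pointwise-in-$(s,l)$ convergence $\groupBn v_n(s)\to\groupB v(s)$ in $H$ from $v_n(s)\to v(s)$ in $\EAdual$ (hence weakly, and then strongly after testing against the fixed $\psi$) combined with $S_n\B(l)S_n\to\B(l)$; a dominated convergence argument in $(s,l,\bar\omega)$ then kills the isometry integrand. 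The delicate point is that $v_n(s)$ only converges in $\EAdual$ for a.e.\ $s$, not in $H$, so I would pair against $\psi\in\EA$ first and only then take $H$-inner products; the mass conservation $\norm{v_n(s)}_H=\norm{u_0}_H$ provides the missing $H$-bound for free.

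\emph{Step 4 (the Marcus correction term \eqref{convergenceMarcusTerm}).} This is a Bochner integral in $s$ and $l$ with no stochastic integration, so it is the easiest: use $\norm{\groupBn x - x + \im\Bn(l)x}_H\le \tfrac12 b_H|l|^2\norm{x}_H$ from Lemma \ref{AldousDifferences} and the corresponding statement for $\B(l)$, the mass conservation for the $H$-bound on $v_n$, pointwise convergence of the integrand from Steps 1/3, and dominated convergence in $(s,l,\bar\omega)$.

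\emph{Main obstacle.} The one genuinely subtle point throughout is the mismatch of topologies: $v_n\to v$ strongly only in the weak spaces ($\cadlagHminuseins$, $\LalphaPlusEinsAlphaPlusEins$, $\weaklyCadlag{\EA}$), never in $L^\infty(0,T;\EA)$ or in $H$ uniformly in $t$. I expect the hardest part to be \eqref{convergenceNoiseTerm}: one must control the stochastic integral via the isometry, handle the operator convergence $\groupBn\to\groupB$ (which uses $S_n\to\Id$ strongly in $H$ and on $\LalphaPlusEins$ together with the uniform bounds of Proposition \ref{PaleyLittlewoodLemma} and Lemma \ref{BnProperties}), and crucially exploit the mass identity $\norm{v_n(s)}_H=\norm{u_0}_H$ from Corollary \ref{cor-main2} to obtain an $H$-bound on $v_n(s)$ that the $Z_T$-convergence alone does not supply — without this the dominating function for the Itô isometry is not available. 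All other steps reduce, after this observation, to dominated convergence in time combined with the Vitali/uniform-integrability upgrade powered by the $L^r(\bar\Omega,\LInftyHeins)$ estimates of Corollary \ref{cor-main}.
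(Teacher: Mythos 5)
Your proposal is correct and follows essentially the same route as the paper's proof: a.e.\ pointwise convergence of each integrand from the $Z_T$-convergence of $(v_n)$, the It\^o isometry for the compensated Poisson integral, domination in $l$ by $|l|^2$ via Lemma \ref{AldousDifferences}, and a Vitali upgrade to $L^2(\bar\Omega\times[0,T])$ using the uniform moment bounds of Corollary \ref{cor-main}. The only cosmetic differences are that the paper exploits the a.s.\ convergence $v_n\to v$ in $L^2(0,T;H)$ where you instead pair against the fixed test vector $\psi$ (moving the unitary groups onto $\psi$) and invoke the mass identity of Corollary \ref{cor-main2} for the $H$-bound, and that you spell out the nonlinear-term argument which the paper delegates to \cite{ExistencePaper}; both variants are valid.
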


\begin{proof}
	\emph{ad \eqref{convergenceNonIntegralTerms}.} We get \eqref{convergenceNonIntegralTerms} pointwise in $\bar{\Omega}\times [0,T]$ from \eqref{initialValueConvergence} and $v_n\to v$ in $L^2(0,T;H).$ In view of
	\begin{align*}
		\Etilde \int_0^T\vert \Real \skpH{v_n(t)-\widetilde{u_{0,n}}}{\psi}\vert^r \df t&\le \norm{\psi}_H^r 	\Etilde \int_0^T \left(\norm{v_n(t)}_H+\norm{u_0}_H\right)^r \df t\le  \norm{\psi}_H^r T 2^r \norm{u_0}_H^r <\infty
	\end{align*}
	for $r>2,$ Vitali's convergence Theorem yields the assertion.\\
	
	\emph{ad \eqref{convergenceDeterministicTerm}.} Let us fix $\omega\in \bar{\Omega}$ and $t\in [0,T].$ Then,
	\begin{align*}
		\int_0^t\Real \skpH{ P_n F(v_n(s))}{\psi}\df s \to \int_0^t\Real \duality{ F(v(s))}{\psi}\df s
	\end{align*}
	follows from $v_n\to v$ in $\LalphaPlusEinsAlphaPlusEins,$ see \cite{ExistencePaper}, Lemma 6.2, step 3. Moreover,
	\begin{align*}
		 \Real \duality{A (v_n(s)-v(s))}{\psi}= \Real \duality{v_n(s)-v(s)}{A\psi}\to 0
	\end{align*}
	for all $s\in [0,T]$ by $v_n\to v$ in $\D_w([0,T],\EA).$ Via
	\begin{align*}
		\Etilde \int_0^T \int_0^t \vert \Real \duality{A v_n(s)}{\psi}\vert^r \df s\df t\le \norm{\psi}_\EA^r T^2 \Etilde \Big[\sup_{s\in [0,T]} \norm{v_n(s)}_{\EA}^r\Big]<\infty,
	\end{align*}
	\begin{align*}
		\Etilde \int_0^T \left\vert \int_0^t \Real \skpH{ P_n F(v_n(s))}{\psi}\df s\right\vert^r\df t
		&\le T^{1+r} \norm{\psi}_\EA^r \Etilde \big[\sup_{s\in [0,T]} \norm{F(v_n(s))}_\EAdual^r\big]\\
		&\lesssim T^{1+r} \norm{\psi}_\EA^r \Etilde \big[\sup_{s\in [0,T]} \norm{v_n(s)}_\EA^{r\alpha}\big]<\infty
	\end{align*}
	for $r>2$, Vitali yields \eqref{convergenceDeterministicTerm} in  $L^2(\bar{\Omega}\times [0,T]).$\\
	
	\emph{ad \eqref{convergenceNoiseTerm}.} In view of the It\^o isometry, it is equivalent to prove
	\begin{align}
		\int_0^\cdot \int_{\left\{\vert l \vert\le 1\right\}} \vert \Real \skpH{\groupBn v_n(s) - v_n(s)-\left[\groupB v(s) - v(s)\right]}{\psi}\vert^2 \nu(\df l)\df s\to 0,\qquad n\to \infty,
	\end{align}
	in $L^1(\bar{\Omega}\times [0,T]).$
	For $x\in H,$ Lebesgue yields
	\begin{align*}
		\norm{\groupBn x-\groupB x}_H&=\bigNorm{\int_0^1 \frac{\df }{\df s}\left[\groupBnS e^{-\im (1-s)\B(l)}x\right]\df s}_H\\
			&\le \int_0^1 \norm{\left(\Bn(l)-\B(l)\right)\groupBnS e^{-\im (1-s)\B(l)}x }_H\df s\to 0,\qquad n\to \infty.
	\end{align*}
	From $v_n\to v$ almost surely in $L^2(0,T;H)$ and again Lebesgue, we infer
		\begin{align}\label{pointwiseConvergenceStochTerm}
		&\int_0^t\vert \Real \skpH{\groupBn v_n - v_n-\left[\groupB v - v\right]}{\psi}\vert ^2\df s\\
		&\qquad\le 2 \int_0^t \left(\norm{\groupBn \left(v - v_n\right)}_H^2+\norm {v_n-v}_H^2+\norm{\left[\groupBn-\groupB \right]v}_H \right)\norm{\psi}_H^2 \df s\to 0
		\end{align}
	as $n\to \infty$ almost surely for all $t\in [0,T]$ and $l\in B(0,1).$ Since we have
	\begin{align}\label{MajoranteInL}
		&\int_0^t\vert \Real \skpH{\groupBn v_n - v_n-\left[\groupB v - v\right]}{\psi}\vert^2 \df s\nonumber\\
		&\qquad\le 2 \norm{\psi}_H^{2} b_H \vert l\vert^2 \left(\norm{v_n}_{L^2(0,t;H)}^2+\norm{v}_{L^2(0,t;H)}^2\right)\lesssim \vert l\vert^2 \in L^1(B(0,1);\nu),
	\end{align}
	by Lemma \ref{AldousDifferences} and Remark \ref{LevyKhintchine}, we get
	\begin{align*}
		\int_{\left\{\vert l \vert\le 1\right\}}\int_0^t\vert \Real \skpH{\groupBn v_n - v_n-\left[\groupB v - v\right]}{\psi}\vert ^2\df s\nu(\df l)\to 0
	\end{align*}
	as $n\to \infty$ almost surely for all $t\in [0,T].$ For $r>1,$ we employ similar estimates as in \eqref{MajoranteInL} for
			\begin{align*}
			&\Etilde \int_0^T \left(\int_{\left\{\vert l \vert\le 1\right\}}\int_0^t\vert \Real \skpH{\groupBn v_n - v_n-\left[\groupB v - v\right]}{\psi}\vert ^2\df s\nu(\df l)\right)^r \df r\\
			&\qquad \lesssim \norm{\psi}_H^{2r}\Etilde \int_0^T \left(\norm{v_n}_{L^2(0,t;H)}^2+\norm{v}_{L^2(0,t;H)}^2\right)^r \df r\\
			&\qquad \lesssim \norm{\psi}_H^{2r} T^{1+r} \Etilde \left[\sup_{s\in [0,T]} \left(\norm{v_n}_{H}^2+\norm{v}_{H}^2\right)^r\right]<\infty,
			\end{align*}
	and thus, we get \eqref{convergenceNoiseTerm} by Vitali's Theorem. \\
	
	\emph{ad \eqref{convergenceMarcusTerm}.} From \eqref{pointwiseConvergenceStochTerm},
	\begin{align*}
		&\int_0^t\vert\Real \skpH{\im \Bn(l)v_n-\im \B(l)v}{\psi}\vert\df s\\
		&\qquad\le  \norm{\psi}_H \left(\norm{\Bn(l)(v_n-v)}_{L^1(0,t;H)}+ \norm{\left[\Bn(l)-\B(l)\right]v}_{L^1(0,t;H)}\right)\\
		&\qquad\le  \norm{\psi}_H t^\frac{1}{2}\left(\norm{\B(l)}_{\LinearOperators{H}}\norm{v_n-v}_{L^2(0,t;H)}+ \norm{\left[\Bn(l)-\B(l)\right]v}_{L^2(0,t;H)}\right)\to 0
	\end{align*}
	and the bound
	\begin{align*}
		\int_0^t\vert \Real \skpH{\groupBn v_n(s) - v_n(s) + \im \Bn(l) v_n(s)}{\psi} \vert \df s&\le \frac{1}{2} b_H \norm{\psi}_H \vert l\vert^2 \norm{v_n}_{L^2(0,t;H)}^2\\
		&\lesssim_{\omega,t} \vert l\vert^2 \in L^1(B(0,1);\nu)
	\end{align*}
	by Lemma \ref{AldousDifferences}, we infer \eqref{convergenceMarcusTerm} pointwise in $\bar{\Omega}\times [0,T].$ The $L^2(\bar{\Omega}\times [0,T])$-convergence follows similarly as in the previous step by the Vitali type argument based on the uniform bounds on $v_n,$ $n\in\N.$
\end{proof}

Finally, we are ready to summarize our results and obtain the existence of a martingale solution.

\begin{proof}[Proof of  Theorem \ref{mainTheorem}]
	\emph{Step 1.}
	Let us define the maps
	\begin{align*}
	M_{n,\psi}(w,t)=&\widetilde{u_{0,n}} -\im \int_0^t\Real \duality{ A w(s)+P_n F(w(s))}{\psi} \df s\\
	&\quad+ \int_{0}^{t} \int_{\left\{\vert l \vert\le 1\right\}} \Real \skpH{\groupBn w(s-) - w(s-)}{\psi} \tilde{\bar{\eta}}(\df s,\df l)\nonumber\\
	&\quad + \int_{0}^{t}\int_{\left\{\vert l \vert\le 1\right\}} \Real \skpH{ \groupBn w(s) - w(s) + \im \Bn(l) w(s)}{\psi}\nu(\df l)\df s;
	\end{align*}
	\begin{align*}
	M_{\psi}(w,t)=& u_0 -\im \int_0^t\Real \duality{ A w(s)+F(w(s))}{\psi} \df s\\
	&\quad+ \int_{0}^{t} \int_{\left\{\vert l \vert\le 1\right\}} \Real \skpH{\groupB w(s-) - w(s-)}{\psi} \tilde{\bar{\eta}}(\df s,\df l)\nonumber\\
	&\quad + \int_{0}^{t}\int_{\left\{\vert l \vert\le 1\right\}} \Real \skpH{ \groupB w(s) - w(s) + \im \B(l) w(s)}{\psi}\nu(\df l)\df s.
	\end{align*}
	The results of Lemma \ref{LemmaConvergences} can be summarized as
	\begin{align*}
	\Real \skpH{v_n}{\psi}-M_{n,\psi}(v_n,\cdot)\to \Real \skpH{v}{\psi}-M_{\psi}(v,\cdot),\qquad n\to \infty,
	\end{align*}
	in $L^2(\bar{\Omega}\times [0,T])$ for all $\psi\in \EA$ and from the definition of $u_n$ via the Galerkin equation, we infer
	$\Real \skpH{u_n(t)}{\psi}=M_{n,\psi}(u_n,t)$ almost surely for all $t\in [0,T].$
	Due to the identity $\Leb_{[0,T]}\otimes \Prob^{u_n}=\Leb_{[0,T]}\otimes \bar{\Prob}^{v_n}$, we obtain
	\begin{align*}
	\Etilde\int_0^T \vert\Real \skpH{v(t)}{\psi}-M_{\psi}(v,t)\vert^2 \df t=& \lim_{n\to \infty} \Etilde\int_0^T \vert\Real \skpH{v_n(t)}{\psi}-M_{n,\psi}(v_n,t)\vert^2 \df t\\
	=& \lim_{n\to \infty} \E\int_0^T \vert\Real \skpH{u_n(t)}{\psi}-M_{n,\psi}(u_n,t)\vert^2 \df t=0
	\end{align*}
	and thus,
	\begin{align*}
		\bar{\Prob}\left\{\Real \skpH{v(t)}{\psi}=M_{\psi}(v,t)\quad \text{f.a.a. $t\in[0,T]$}\right\}=1.
	\end{align*}
	Since both $\Real \skpH{v}{\psi}$ and $M_{\psi}(v,\cdot)$ are almost surely in $\D([0,T]),$ we obtain
		\begin{align*}
		\bar{\Prob}\left\{\Real \skpH{v(t)}{\psi}=M_{\psi}(v,t)\quad \forall t\in[0,T]\right\}=1,
		\end{align*}
	which means that $\left(\bar{\Omega},\bar{\F},\bar{\Prob},\bar{\eta},\bar{\Filtration},u\right)$ is a martingale solution to $\eqref{marcus2}.$\\

	\emph{Step 2.} In order to conclude the proof, we need to show that the process $v$ satisfies the mass preservation condition \eqref{eqn-normPreservation}.
Let us first fix $\omega \in \bar{\Omega}$ such that 
\begin{align}\label{vkConvergenceZT}
	v_k (\cdot,\omega) \to v(\cdot,\omega) \mbox{ in } Z_T,
\end{align}
as $k\to\infty.$
By part (a)(ii) of Corollary \ref{cor-main}, the set of such elements is a full set in $\bar\Omega$. Together with Lemma \ref{lem-boundednessWeaklyCadlag}, \eqref{vkConvergenceZT} implies that there exists $r=r(\omega)>0$ such that $\sup_{t\in[0,T]}\norm{v_k(t,\omega)}\le r$ for every $k\in\N.$  From \eqref{vkConvergenceZT} and Proposition \ref{cadlagTopology}, we infer 
that there is a sequence $\left(\lambda_k\right)_{n\in\N}=\left(\lambda_k(\omega)\right)_{n\in\N}\in \Lambda^\N,$ such that 
\begin{align*}
\sup_{t\in[0,T]}\norm{v_k(\lambda_k(t),\omega)-v(t,\omega)}_{\EAdual}\to 0,\quad k\to\infty.
\end{align*}
Hence, we get
\begin{align*}
&\sup_{t\in[0,T]}\norm{v_k(\lambda_k(t),\omega)-v(t,\omega)}_{H}\\&\qquad\lesssim \sup_{t\in[0,T]}\left[\norm{v_k(\lambda_k(t),\omega)-v(t,\omega)}_{\EAdual}^\frac{1}{2}\norm{v_k(\lambda_k(t),\omega)-v(t,\omega)}_{\EA}^\frac{1}{2}\right]\\
&\qquad\le (2r)^\frac{1}{2} \sup_{t\in[0,T]}\norm{v_k(\lambda_k(t),\omega)-v(t,\omega)}_{\EAdual}^\frac{1}{2}\to 0,\qquad k\to\infty.
\end{align*}
In view of Proposition \ref{cadlagTopology}, this implies $v_k(\cdot,\omega)\to v(\cdot,\omega)$ in $\D([0,T],H)$ as $k\to\infty.$
%
%
%
%
%
%
%
%
%
%
%
Since the norm function $\norm{\cdot}_H: H \to \mathbb{R}$ is Lipschitz continuous we deduce that 
\[
\norm{v_k (\cdot,\omega)}_H \to \norm{v(\cdot,\omega)}_H \mbox{ in }  \D([0,T],\mathbb{R}).
\]
On the other hand, by  Corollary \ref{cor-main2},  we infer that 
\[
\norm{v_k (t,\omega)}_H=\norm{u_0}_H \mbox{ for all } t \in [0,T].
\]
Applying finally Lemma \ref{lem-App C-01} we infer that 
\[
\norm{v (t,\omega)}_H=\norm{u_0}_H \mbox{ for all } t \in [0,T].
\]

\end{proof}

\newpage
\begin{appendix}
	\section{Time Homogeneous Poisson Random Measure}\label{PRM}
	Let $\bar{\mathbb{N}}$ denote the set of extended natural numbers, i.e., $\bar{\mathbb{N}}:=\mathbb{N} \cup \{\infty\}$ and $\mathbb{R}^{+}:=[0,\infty).$ Let $(S, \mathscr{S})$ be a measurable space and $M_{\bar{\mathbb{N}}}(S)$ be the set of all $\bar{\mathbb{N}}$-valued measures on the measurable space $(S, \mathscr{S}).$
	On the set $M_{\bar{\mathbb{N}}}(S)$ we consider the $\sigma$-field $\mathscr{M}_{\bar{\mathbb{N}}}(S)$ defined as the smallest $\sigma$-field such that for all $C\in \mathscr{S}:$ the map $$i_C:M_{\bar{\mathbb{N}}}(S) \ni \mu \rightarrow \mu(C) \in \bar{\mathbb{N}}$$ is measurable.
	
	\begin{definition}
		Let $(Y, \mathscr{B}(Y))$ be a measurable space. A  time homogeneous Poisson random measure $\eta$ on $(Y, \mathscr{B}(Y))$ over $(\Omega,\mathcal{F},\mathbb{F},\mathbb{P})$ is a measurable function
		$$ \eta\,:\, (\Omega,\mathcal{F}) \rightarrow (M_{\bar{\mathbb{N}}}(\mathbb{R}^+ \times Y),\mathscr{M}_{\bar{\mathbb{N}}}(\mathbb{R}^+ \times Y))$$
		such that
		\begin{itemize}
			\item[(a)] for each $C\in\mathscr{B}(\mathbb{R}^+) \otimes \mathscr{B}(Y), \eta(C):=i_C \circ \eta :\Omega \rightarrow \bar{\mathbb{N}}$ is a Poisson random variable with parameter $\mathbb{E}[\eta(C)];$
			\item[(b)] $\eta$ is independently scattered, i.e., if the sets $C_1,C_2,\ldots,C_n \in \mathscr{B}(\mathbb{R}^+)\otimes \mathscr{B}(Y) $ are disjoint, then the random variables $\eta(C_1), \eta(C_2),\ldots, \eta(C_n)$ are mutually independent;
			\item[(c)] for all $U \in \mathscr{B}(Y)$ the $\bar{\mathbb{N}}-$valued process $(N(t,U))_{t \geq 0}$ defined by
			$$ N(t,U):= \eta((0,t]\times U),\quad t \geq 0$$  is $\mathcal{F}_t$-adapted and its increments are independent of the past, i.e., if $t>s \geq 0,$ then $N(t,U) - N(s,U) = \eta((s,t]\times U)$ is independent of $\mathcal{F}_s.$
		\end{itemize}
	\end{definition}
	
	If $\eta$ is a time homogeneous Poisson random measure then the formula
	$$\nu(A):=\mathbb{E}[\eta((0,1] \times A)],\quad A \in \mathscr{B}(Y)$$ defines a measure on $(Y, \mathscr{B}(Y))$ called the intensity measure of $\eta.$ We assume that $\nu$ is $\sigma$-finite.
	Moreover, for all $T<\infty$ and all $A \in \mathscr{B}(Y)$ such that $\mathbb{E}[\eta((0,T] \times A)]< \infty,$ the $\mathbb{R}-$valued process $\{\tilde{N}(t,A)\}_{t \in [0,T]}$ defined by
	$$\tilde{N}(t,A):=\eta((0,t]\times A) -t \, \nu(A),\quad t \in (0,T],$$
	is an integrable martingale on $(\Omega,\mathcal{F},\mathbb{F},\mathbb{P}).$ The random measure $m\otimes\nu$ on $\mathscr{B}(\mathbb{R}^+)\otimes \mathscr{B}(Y),$ where $m$ stands for the Lebesgue measure (often denoted also as $\Leb$), is called a compensator of $\eta$ and the difference between a time homogeneous Poisson random measure $\eta$ and its compensator, i.e.,
	$$ \tilde{\eta}:=\eta-m\otimes\nu,$$ is called a compensated time homogeneous Poisson random measure.
	
	We follow the notion of Ikeda and Watanabe \cite{IkedaWatanabe}, Peszat and Zabczyk \cite{PeszatZabczyk},  to list some of the basic properties of the stochastic integral with respect to $\tilde{\eta}.$
	Let $E$ be a separable Hilbert space and let $\mathcal{P}$ be a predictable $\sigma$-field on $[0,T]\times\Omega.$ Let $\mathfrak{L}^2_{\nu,T}(\mathcal{P}\otimes\mathscr{B}(Y), m \otimes \mathbb{P}\otimes \nu; E)$ be a space of all $E$-valued, $\mathcal{P}\otimes\mathscr{B}(Y)$-measurable processes such that $$\mathbb{E}\bigg[\int_0^T \int_Y \|\xi(s,\cdot,y)\|_E^2 \, d \nu(y) \,ds \bigg] <\infty. $$
	
	If $\xi \in \mathfrak{L}^2_{\nu,T}(\mathcal{P}\otimes\mathscr{B}(Y), m \otimes \mathbb{P}\otimes \nu; E)$ then the integral process ${\int_0^T \int_Y} \xi(s,\cdot,y)\tilde{\eta}(ds,dy),$ $t \in[0,T],$ is a c\`{a}dl\`{a}g square-integrable $E$-valued martingale. Moreover, we have the following isometry formula
	\begin{align}\label{isof}
	\mathbb{E}\bigg[\bigg\| \int_0^T \int_Y \xi(s,\cdot,y) \,\tilde{\eta}(ds,dy)\bigg\|_E^2\bigg]=\mathbb{E}\bigg[\int_0^T \int_Y \|\xi(s,\cdot,y)\|_E^2 \, d \nu(y) \,ds \bigg],\, t\in[0,T].
	\end{align}
	
	\section{Marcus Canonical SDEs}
	Let $(\Omega, \mathcal{F}, \mathbb{F}, \mathbb{P})$ be a probability space equipped with a filtration $\mathbb{F}:=\left\lbrace \mathcal{F}_t, t \geq 0\right\rbrace$ that satisfies the usual hypothesis (i.e., $\mathcal{F}_0$ contains all $\mathbb{P}$-null sets and $\mathcal{F}$ is right continuous). Let $\v_0, \v_1, \ldots, \v_N: \mathbb{R}^d\to\mathbb{R}^d$ be complete $C^1$-vector fields. Define $\v: \mathbb{R}^d\to\mathcal{L}(\mathbb{R}^N, \mathbb{R}^d)$ such that $\v(y)(h):=\sum_{j=1}^N \v_j(y)h_j, h\in\mathbb{R}^N, y\in\mathbb{R}^d$.

	Let $L(t) := (L_1(t), \cdots, L_N(t))$ be a $\mathbb{R}^N-$ valued L\'evy process with pure jump,
	\begin{equation*}
	L(t) = \int_{0}^{t}\int_{B}\!l \,\tilde{\eta}(ds,dl) + \int_{0}^{t}\int_{B^c}\!l \,\eta(ds,dl)
	\end{equation*} where $B := \mathbb{B}(0,1) \subset \mathbb{R}^N$, $l=(l_1,\ldots, l_N)\in\mathbb{R}^N$ ; $\eta$, $\tilde{\eta}$ represent homogeneous Poisson random measure and the compensated one with the compensator $m\otimes\nu$ respectively. We always assume that $\eta$ is independent of $\mathcal{F}_0$.
	
	Consider the following ``Marcus" stochastic differential equation:
	\begin{align}\label{MSDE}
	dY(t)&=\v_0(Y(t))\,dt +\v(Y(t-))\diamond dL(t)\nonumber\\
	&=\v_0(Y(t))\,dt + \sum_{j=1}^N\v_j(Y(t-))\diamond dL_j(t),
	\end{align}
	which is defined in the integral form as follows
	\begin{align}\label{MSDE1}
	Y(t)&=Y_0+\int_0^t\v_0(Y(s))\,ds +\int_0^t\int_B\Big[\Phi\big(1,l,Y(s-)\big)-Y(s-)\Big]\tilde{\eta}(ds, dl)\nonumber\\
	&\quad\quad+\int_0^t\int_{B^c}\Big[\Phi\big(1,l,Y(s-)\big)-Y(s-)\Big] \eta(ds, dl)\nonumber \\
	&\quad\quad+\int_0^t\int_B\Big[\Phi\big(1,l,Y(s)\big)-Y(s)-\sum_{j=1}^N l_j\v_j(Y(s))\Big] \nu(dl) ds,	
	\end{align}
	where $y(t):=\Phi(t,l,y_0)$ solves
	\begin{align}\label{ODE}
	\frac{dy}{dt} = \sum_{j=1}^Nl_j\v_j(y), \quad \ \textrm{ with initial condition}\ y(0)=y_0.
	\end{align}
	\noindent

	\begin{theorem}[It\^o's formula 1]\label{T1}
		Let $\varphi:\mathbb{R}^d\to\mathbb{R}^k$ is a $C^1$-class function.
		If $Y$ is an $\mathbb{R}^d$-valued process a solution to \eqref{MSDE}, then
		\begin{align}\label{eqn-Ito2}	
		&\varphi(Y(t))-\varphi(Y_0)\nonumber\\
		&= \int_0^t \varphi^\prime (Y(s))(\v_0(Y(s)))\, ds+\int_0^t\int_{B^c}\Big[\varphi\big(\Phi\big(1,l,Y(s-)\big)\big)-\varphi(Y(s-))\Big]	\eta(ds, dl)\nonumber\\
		&\quad\quad+\int_0^t\int_{B}\Big[\varphi\big(\Phi\big(1,l,Y(s-)\big)\big)-\varphi(Y(s-))\Big]	\tilde{\eta}(ds, dl)
		\nonumber\\
		&\quad\quad+\int_0^t\int_{B}\Big[\varphi\big(\Phi\big(1,l,Y(s)\big)\big)-\varphi(Y(s))-\sum_{j=1}^N l_j\varphi'(Y(s))(\v_j(Y(s)))\Big]\nu(dl) ds.
		\end{align}

		Moreover, when $k=d$ and $\varphi:\mathbb{R}^d\to\mathbb{R}^d$ is a $C^1$-diffeomorphism, we define for each $j=0,1,\ldots,N$,
		the ``Push-forward" of the vector fields $\v_j$ by $\varphi'$ as $\hat{\v}_j: \mathbb{R}^d\to\mathbb{R}^d$ such that
		\[z\mapsto (d_{_{\varphi^{-1}(z)}}\varphi)\big(\v_j(\varphi^{-1}(z))\big) :=\varphi'(\varphi^{-1}(z)) \big(\v_j(\varphi^{-1}(z))\big).\]
		Let $\hat{\v}: \mathbb{R}^d\to\mathcal{L}(\mathbb{R}^N, \mathbb{R}^d)$ be as before. \\
		Then $Y$ is a solution to \eqref{MSDE} iff $$Z(t):=\varphi(Y(t))$$ is a solution to
		\begin{equation}\label{MSDE2}
		dZ=\hat{\v}_0(Z(t))\,dt + \hat{\v}(Z(t))\diamond dL(t), \quad Z_0=\varphi(Y_0).
		\end{equation}
	\end{theorem}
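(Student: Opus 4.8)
The plan is to follow the classical interlacing argument of Marcus, Kunita and Applebaum (see \cite{Marcus}, \cite{Applebaum_2009}, \cite{Kunita}) in the present notation: first establish \eqref{eqn-Ito2} when the L\'evy measure has finite mass, then pass to the limit, and finally deduce the ``iff'' statement by conjugating the chain rule with $\varphi$ and $\varphi^{-1}$.

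\emph{Step 1 (finite activity).} I would first suppose that $\nu(\R^N)<\infty$, so that $L$ has only finitely many jumps $0<\tau_1<\dots<\tau_m\le t$ on $[0,t]$ with sizes $l_i:=\Delta L(\tau_i)$. Between consecutive jumps $Y$ solves the ordinary differential equation $\dot Y(s)=\v_0(Y(s))-\int_B\sum_j l_j\v_j(Y(s))\,\nu(\df l)$ (the correction comes from the last line of \eqref{MSDE1}), while at $\tau_{i+1}$ it jumps from $Y(\tau_{i+1}-)$ to $\Phi(1,l_{i+1},Y(\tau_{i+1}-))$. Applying the ordinary chain rule on each continuous piece and telescoping over the jumps yields
\[
\varphi(Y(t))-\varphi(Y_0)=\int_0^t\varphi'(Y(s))\Bigl(\v_0(Y(s))-\int_B\textstyle\sum_j l_j\v_j(Y(s))\,\nu(\df l)\Bigr)\df s+\int_0^t\!\!\int_{\R^N}\!\bigl[\varphi(\Phi(1,l,Y(s-)))-\varphi(Y(s-))\bigr]\,\eta(\df s,\df l),
\]
and splitting the last integral over $\{|l|\le1\}$ and $\{|l|>1\}$, compensating the first part and absorbing the resulting drift into the compensator term, gives exactly \eqref{eqn-Ito2}.

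\emph{Step 2 (general case, main obstacle).} For a general L\'evy measure, I would replace $\nu$ by its finite restriction $\nu_\varepsilon:=\nu|_{\{|l|>\varepsilon\}}$, let $Y^\varepsilon$ be the corresponding solution of \eqref{MSDE1} — to which Step 1 applies — and show $\sup_{s\le t}\|Y^\varepsilon(s)-Y(s)\|\to0$ in probability as $\varepsilon\to0$ by means of the isometry \eqref{isof}, the bound $\|\Phi(1,l,x)-x\|\lesssim|l|(1+\|x\|)$, the completeness of the vector fields, and Gronwall's lemma. One then passes to the limit in the It\^o formula for $Y^\varepsilon$. The first term and the two $\eta$/$\tilde\eta$ integrals are routine; the genuinely delicate point is the compensator term, which I would handle using the identity
\[
\varphi(\Phi(1,l,x))-\varphi(x)-\textstyle\sum_j l_j\varphi'(x)\v_j(x)=\int_0^1\textstyle\sum_j l_j\bigl[\varphi'(\Phi(\sigma,l,x))\v_j(\Phi(\sigma,l,x))-\varphi'(x)\v_j(x)\bigr]\df\sigma
\]
together with the local Lipschitz continuity of $\varphi'\v_j$ to produce an $O(|l|^2)$ bound on the integrand, which is $\nu$-integrable on $B$ precisely because $\int_B|l|^2\,\nu(\df l)<\infty$. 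This estimate, and the uniform-in-$\varepsilon$ control it provides, is the main obstacle of the proof (and the point where the standing regularity of $\varphi$ and of the vector fields is used).

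\emph{Step 3 (the push-forward statement).} Let $\varphi$ be a $C^1$-diffeomorphism. The key elementary fact is that if $s\mapsto\Phi(s,l,y)$ solves $\dot y=\sum_j l_j\v_j(y)$ then $z(s):=\varphi(\Phi(s,l,y))$ solves $\dot z=\sum_j l_j\hat{\v}_j(z)$, so that $\varphi\circ\Phi(\cdot,l,\cdot)=\hat{\Phi}(\cdot,l,\varphi(\cdot))$ with $\hat{\Phi}$ the flow of the pushed-forward fields. Applying the chain rule \eqref{eqn-Ito2} to $Z=\varphi(Y)$ and substituting this conjugacy into every term, one recognizes the right-hand side as the integral form \eqref{MSDE1} of \eqref{MSDE2}, which gives the ``only if'' direction; the ``if'' direction follows by the same argument applied to the $C^1$-diffeomorphism $\varphi^{-1}$, using that pushing $\hat{\v}_j$ forward by $(\varphi^{-1})'$ returns $\v_j$.
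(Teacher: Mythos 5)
Your proposal is correct in outline, but it takes a genuinely different route from the paper. The paper does not prove the formula by interlacing at all: in the proof of the Hilbert--space version (Theorem \ref{T2}), it sets $f(y,l):=\Phi(1,l,y)-y$ and $a(y):=\v_0(y)+\int_B\bigl[f(y,l)-\sum_{j=1}^N l_j\v_j(y)\bigr]\nu(dl)$, observes that the Marcus equation \eqref{MSDE1} is then an ordinary SDE driven by $\tilde{\eta}$ with drift $a$ and jump coefficient $f$, invokes the ready-made It\^o formula of Brze\'zniak--Hausenblas--Zhu (Theorem B.1 of \cite{BrzezniakHausenblasJumpNavierStokes}), and simply regroups the resulting terms so that the compensating integrals cancel, which yields \eqref{eqn-Ito2}; your Steps 1--2 instead re-derive that It\^o formula from scratch (finite activity plus chain rule and telescoping, then the $\varepsilon$-cutoff and a passage to the limit). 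The paper's route is shorter and defers all the hard convergence analysis to the cited theorem, while yours is self-contained; the price is that your limiting argument quietly strengthens the hypotheses: the $O(|l|^2)$ bound on the compensator integrand requires $\varphi'$ locally Lipschitz (or $(p-1)$-H\"older, exactly the condition the paper imposes in Theorem \ref{T2}), and under the bare $C^1$ assumption of the statement neither that bound nor even the absolute convergence of the $\nu(dl)\,ds$ integral is guaranteed; likewise your global estimate $\Vert\Phi(1,l,x)-x\Vert\lesssim|l|(1+\Vert x\Vert)$ presumes linear growth of the vector fields, and identifying $\lim_{\varepsilon\to 0}Y^\varepsilon$ with the \emph{given} solution $Y$ needs a local Lipschitz/stopping-time stability argument --- all standard and fixable by localization, but they are the points where your proof does real work that the paper avoids. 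Your Step 3 (conjugating the flow, $\hat{\Phi}(t,l,\varphi(y))=\varphi(\Phi(t,l,y))$, and applying \eqref{eqn-Ito2} to $\varphi$ and then to $\varphi^{-1}$) is essentially identical to the paper's treatment of the push-forward statement.
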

	
	We will now present an infinite dimensional version of the above result, which has been used in this work.
	\par
	\noindent
	As before let $(\Omega, \mathcal{F}, \mathbb{F}, \mathbb{P})$ be a complete probability space. Let $E$ be a separable Hilbert space. Let $\v_0, \v_1, \ldots, \v_N: E\to E$ be complete $C^1$-vector fields. Define $\v: E\to\mathcal{L}(\R^N, E)$ such that $\v(y)(h):=\sum_{j=1}^N \v_j(y)h_j, h\in\R^N, y\in E$. Define the L\'evy process $L(t)$ as before. Define the Marcus mapping
	$$\Phi: \mathbb{R}_{+}  \times \R^N \times E \rightarrow E$$ such that for each fixed $l \in \R^N$, $y_0 \in E$, the function $$t \mapsto \Phi(t,l,u_0)$$ is the continuously differentiable solution of the ordinary differential equation
	\begin{equation*}
	\dfrac{dy}{dt} = \sum_{j=1}^N l_j v_j(y),\ t\geq 0,
	\end{equation*} with $y(0)=y_0 \in E$, and $l=(l_1, l_2, \ldots, l_N)\in B$, i.e.,
	\begin{equation*}
	\Phi(t,l,y_0) = \Phi(0,l,y_0) + \int_{0}^{t}\! \sum_{j=1}^N l_j v_j(\Phi(s,l,y_0) )\, ds, \ t\geq 0.
	\end{equation*}
	With the above setting, let us consider the $E$-valued process $Y$ given by \eqref{MSDE1}. Then we have the following result.
	
	\begin{theorem}[It\^o's formula 2]\label{T2}
		Let $G$ be a separable Hilbert space and $\varphi:E\to G$ be a $C^1$-class function such that the first derivative $\varphi^\prime: E\to \mathcal{L}(E, G)$ is $(p-1)$-H\"older continuous.
		If $Y$ is an $E$-valued process given by \eqref{MSDE1}, then for every $t>0$, we have $\Prob$-a.s.
		\begin{align}\label{eqn-Ito2}	
		&\varphi(Y(t))-\varphi(Y_0)\nonumber\\
		&= \int_0^t \varphi^\prime (Y(s))(\v_0(Y(s)))\, ds+\int_0^t\int_{B^c}\Big[\varphi\big(\Phi\big(1,l,Y(s-)\big)\big)-\varphi(Y(s-))\Big]	\eta(ds, dl)\nonumber\\
		&\quad\quad+\int_0^t\int_{B}\Big[\varphi\big(\Phi\big(1,l,Y(s-)\big)\big)-\varphi(Y(s-))\Big]	\tilde{\eta}(ds, dl)
		\nonumber\\
		&\quad\quad+\int_0^t\int_{B}\Big[\varphi\big(\Phi\big(1,l,Y(s)\big)\big)-\varphi(Y(s))-\sum_{j=1}^N l_j\varphi'(Y(s))(\v_j(Y(s)))\Big]\nu(dl) ds.
		\end{align}

		Moreover, when $\varphi:E\to E$ is a $C^1$-diffeomorphism, we define for each $j=0,1,\ldots,N$,
		the ``Push-forward" of the vector fields $\v_j$ by $\varphi'$ as $\hat{\v}_j: E\to E$ such that
		\[z\mapsto (d_{_{\varphi^{-1}(z)}}\varphi)\big(\v_j(\varphi^{-1}(z))\big) :=\varphi'(\varphi^{-1}(z)) \big(\v_j(\varphi^{-1}(z))\big).\]
		Let $\hat{\v}: E\to\mathcal{L}(\R^N, E)$ be as before. \\
		Then $Y$ is a solution to \eqref{MSDE} iff $$Z(t):=\varphi(Y(t))$$ is a solution to
		\begin{equation}\label{MSDE2}
		dZ=\hat{\v}_0(Z(t))\,dt + \hat{\v}(Z(t))\diamond dL(t), \quad Z_0=\varphi(Y_0).
		\end{equation}
	\end{theorem}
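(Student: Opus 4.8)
\emph{Overview and Step 1 (rewriting in It\^o form).} The plan is to pass from the Marcus canonical equation to an ordinary jump It\^o equation, apply the It\^o formula for Hilbert-space valued semimartingales \emph{without} a continuous martingale part, and then reorganise the resulting terms back into ``Marcus form'' via the fundamental theorem of calculus along the flow $t\mapsto\Phi(t,l,y)$. Concretely, put $G(y,l):=\Phi(1,l,y)-y$ and absorb the last $\nu(dl)\,ds$-integral of \eqref{MSDE1} into the drift, writing $Y$ as the solution of
\[
Y(t)=Y_0+\int_0^t\tilde{\v}_0(Y(s))\,ds+\int_0^t\!\int_B G(Y(s-),l)\,\tilde\eta(ds,dl)+\int_0^t\!\int_{B^c}G(Y(s-),l)\,\eta(ds,dl),
\]
with $\tilde{\v}_0(y):=\v_0(y)+\int_B\big[G(y,l)-\sum_{j=1}^N l_j\v_j(y)\big]\nu(dl)$. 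Since the $\v_j$ are (locally Lipschitz) $C^1$ fields, Gr\"onwall's lemma gives $\|G(y,l)\|_E\lesssim|l|$ and $\|G(y,l)-\sum_j l_j\v_j(y)\|_E\lesssim|l|^2$ locally uniformly in $y$; together with \eqref{LevyMeasure} and $\nu(B^c)<\infty$ this makes $\tilde{\v}_0$ well defined, puts the $\tilde\eta$-integrand in $\mathfrak L^2_{\nu,T}$, and shows the large-jump integral has finite activity. Hence $Y$ is a c\`adl\`ag $E$-valued semimartingale whose continuous finite-variation part is $\int_0^\cdot\tilde{\v}_0(Y(s))\,ds$, whose purely discontinuous martingale part is $\int_0^\cdot\!\int_B G\,d\tilde\eta$, and whose jumps are $\Delta Y(s)=\Phi(1,l,Y(s-))-Y(s-)$ at the jump times of $\eta$.

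\emph{Step 2 (the It\^o formula in It\^o form).} For a semimartingale of this type the It\^o formula holds for $\varphi\in C^1$ with $(p-1)$-H\"older derivative --- $C^2$-regularity is not needed precisely because $Y$ carries no continuous martingale component --- and produces: a drift term $\int_0^t\varphi'(Y(s))(\tilde{\v}_0(Y(s)))\,ds$; a large-jump term $\int_0^t\!\int_{B^c}[\varphi(Y(s-)+G)-\varphi(Y(s-))]\,\eta(ds,dl)$ and a compensated small-jump term $\int_0^t\!\int_B[\varphi(Y(s-)+G)-\varphi(Y(s-))]\,\tilde\eta(ds,dl)$ (with $G$ short for $G(Y(s-),l)$); and a compensator term $\int_0^t\!\int_B[\varphi(Y(s)+G)-\varphi(Y(s))-\varphi'(Y(s))G]\,\nu(dl)\,ds$. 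The last term is integrable because $\|\varphi(y+h)-\varphi(y)-\varphi'(y)h\|_G\lesssim\|h\|_E^{p}$ and $\|G(\cdot,l)\|_E\lesssim|l|$, so one needs $\int_B|l|^p\,\nu(dl)<\infty$; this holds whenever $p\ge2$, in particular for $\varphi\in C^2$ --- the only case used in the body of the paper (e.g. $\varphi=\energy$ or $\varphi=\norm{\cdot}_H^2$), where $p=2$ and \eqref{LevyMeasure} suffices --- and the same bound makes the $\tilde\eta$-integral well defined. This formula can be taken from the literature (Ikeda--Watanabe, Peszat--Zabczyk for the $\R^N$-case, with the standard Hilbert-space extension), or derived from the finite-dimensional Theorem \ref{T1} by approximating $Y$ by Galerkin processes in finite-dimensional subspaces and $\varphi$ by compositions with finite-rank projections, passing to the limit via the $L^2$-bounds above, with a localisation by the stopping times $\inf\{t:\|Y(t)\|_E>m\}$ to dispense with global bounds on $\varphi'$.

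\emph{Step 3 (back to Marcus form, and the diffeomorphism statement).} Substituting $G(y,l)=\Phi(1,l,y)-y$, the drift term splits off $\int_0^t\varphi'(Y(s))(\v_0(Y(s)))\,ds$ and a $\nu(dl)\,ds$-remainder $\int_0^t\!\int_B\varphi'(Y(s))\big(\Phi(1,l,Y(s))-Y(s)-\sum_j l_j\v_j(Y(s))\big)\nu(dl)\,ds$; adding this remainder to the compensator term of Step 2 gives exactly the Marcus correction $\int_0^t\!\int_B\big[\varphi(\Phi(1,l,Y(s)))-\varphi(Y(s))-\sum_j l_j\varphi'(Y(s))(\v_j(Y(s)))\big]\nu(dl)\,ds$, while the $\eta$- and $\tilde\eta$-integrals already have the stated form; this is the asserted It\^o formula. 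For the second assertion, with $\hat{\v}_j:=\varphi'\circ\v_j\circ\varphi^{-1}$ the key observation is the conjugacy of the flows: $\frac{d}{dt}\varphi(\Phi(t,l,y))=\varphi'(\Phi(t,l,y))\big(\sum_j l_j\v_j(\Phi(t,l,y))\big)=\sum_j l_j\hat{\v}_j(\varphi(\Phi(t,l,y)))$, so $t\mapsto\varphi(\Phi(t,l,y))$ is the flow of $\sum_j l_j\hat{\v}_j$ started at $\varphi(y)$, i.e. $\hat\Phi(t,l,\varphi(y))=\varphi(\Phi(t,l,y))$. Feeding this and $\varphi'(Y)(\v_j(Y))=\hat{\v}_j(\varphi(Y))$ into the formula just obtained turns it verbatim into the integral form \eqref{MSDE1} for $Z=\varphi(Y)$ with the hatted fields, i.e. $Z$ solves \eqref{MSDE2}; the converse follows by applying the same argument to $\varphi^{-1}$, which pushes $\hat{\v}_j$ back to $\v_j$.

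\emph{Main obstacle.} The crux is Step 2: identifying the correct minimal-regularity It\^o formula in the Hilbert-space setting (or a clean reference for it) and, if one proves it by approximation, controlling the interchanges --- in particular the convergence of the compensator terms --- uniformly; this is exactly where the $(p-1)$-H\"older bound on $\varphi'$ and the integrability $\int_B|l|^p\,\nu(dl)<\infty$ are essential. Steps 1 and 3 are then routine manipulations with the chain rule for the ODE flow $\Phi$.
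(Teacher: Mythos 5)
Your proposal is correct and follows essentially the same route as the paper: rewrite \eqref{MSDE1} in It\^o form with the modified drift $a(y)=\v_0(y)+\int_B\big[\Phi(1,l,y)-y-\sum_j l_j\v_j(y)\big]\nu(dl)$, invoke a known Hilbert-space It\^o formula for Poisson-type integrals (the paper quotes Theorem B.1 of \cite{BrzezniakHausenblasJumpNavierStokes}, where you cite Ikeda--Watanabe/Peszat--Zabczyk or an approximation argument), regroup and cancel terms to recover the Marcus form, and obtain the diffeomorphism statement from the flow conjugacy $\hat\Phi(t,l,\varphi(y))=\varphi(\Phi(t,l,y))$ with the converse via $\varphi^{-1}$. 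The only cosmetic differences are that the paper simplifies by taking $\eta=0$ on $B^c$ while you keep the finite-activity large jumps, and that your quoted It\^o formula already has the jump increments inside the compensated integral, so slightly less rearrangement is needed.
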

	\begin{proof}
		Let us assume, for the sake of simplicity, $\eta=0$ on $B^c$. For $y\in E$, define
		\begin{align}
		f(y,l)&:=\Phi\big(1,l,y\big)-y\ \ \mbox{for all}\ l\in B\label{defnf}\\
		a(y)&:=\v_0(y) + \int_B\Big[\Phi\big(1,l,y\big)-y-\sum_{j=1}^N l_j\v_j(y)\Big] \nu(dl)\nonumber \\
		&= \v_0(y) + \int_B\Big[f(y,l)-\sum_{j=1}^N l_j\v_j(y)\Big] \nu(dl).\label{defna}
		\end{align}
		Then the $E$-valued process $Y$ given in \eqref{MSDE1} takes the form
		\begin{align}\label{MSDE2}
		Y(t)&=Y_0+\int_0^t a(Y(s))\,ds +\int_0^t\int_B f(Y(s-),l) \tilde{\eta}(ds, dl).
		\end{align}
		Then by the It\^o's formula (see Theorem B.1 in Brze\'zniak {\it et al.} \cite{BrzezniakHausenblasJumpNavierStokes}), for every $t>0$, we have $\Prob$-a.s.
		\begin{align}\label{eqn-Ito3}	
		\varphi(Y(t))&=\varphi(Y_0)+\int_0^t \varphi^\prime (Y(s))(a(Y(s)))\, ds+\int_0^t\int_B \varphi^\prime (Y(s-)) (f(Y(s-),l)) \tilde{\eta}(ds, dl)\nonumber\\
		&\quad + \int_0^t\int_B\Big[\varphi\big(Y(s-)+f(Y(s-),l)\big) - \varphi(Y(s-)) - \varphi^\prime (Y(s-)) (f(Y(s-),l)) \Big] \eta(ds, dl)\nonumber\\
		&= \varphi(Y_0)+\sum_{i=1}^3 I_i.
		\end{align}
		Note that by the definition of $a$ in \eqref{defna}
		\begin{align}\label{I1}
		I_1&:= \int_0^t \varphi^\prime (Y(s))(a(Y(s)))\, ds\nonumber\\
		&=\int_0^t \varphi^\prime (Y(s))(\v_0(Y(s)))\, ds + \int_0^t\int_B \varphi^\prime (Y(s))(f(Y(s),l))\, \nu(dl)\, ds\nonumber\\
		&\quad- \int_0^t\int_B \Big[\sum_{j=1}^N l_j\varphi'(Y(s))(\v_j(Y(s)))\Big] \nu(dl)\, ds.
		\end{align}
		Using the definitions of $f$ in \eqref{defnf} and that of compensated Poisson random measure $ \tilde{\eta}:=\eta-m\otimes\nu,$ we have
		\begin{align}\label{I3}
		I_3&:= \int_0^t\int_B\Big[\varphi\big(Y(s-)+f(Y(s-),l)\big) - \varphi(Y(s-)) - \varphi^\prime (Y(s-)) (f(Y(s-),l)) \Big] \eta(ds, dl)\nonumber\\
		&=\int_0^t\int_B\Big[\varphi\big(\Phi(1,l,Y(s-))\big) - \varphi(Y(s-)) - \varphi^\prime (Y(s-)) (f(Y(s-),l)) \Big] \eta(ds, dl)\nonumber\\
		&= \int_0^t\int_B\Big[\varphi\big(\Phi(1,l,Y(s-))\big) - \varphi(Y(s-))\Big] \tilde{\eta}(ds, dl) - \int_0^t\int_B \varphi^\prime (Y(s-)) (f(Y(s-),l)) \tilde{\eta}(ds, dl)\nonumber\\
		&\quad +  \int_0^t\int_B\Big[\varphi\big(\Phi(1,l,Y(s))\big) - \varphi(Y(s))\Big] \nu(dl)\, ds - \int_0^t\int_B \varphi^\prime (Y(s))(f(Y(s),l))\, \nu(dl)\, ds.
		\end{align}
		Note that, while adding up $I_1, I_2$ and $I_3$, the second term of \eqref{I1} and the last term of \eqref{I3} cancel each other. Also note the 2nd term on the right hand of  \eqref{I3} is $-I_2$, and thus it gets cancelled with $I_2$. Hence using \eqref{I1} and \eqref{I3} in \eqref{eqn-Ito3}, and grouping the similar integrals we have the desired result \eqref{eqn-Ito2}.	
		\par
		\noindent
		To prove the second part of the Theorem, let us define a map
		$$\hat{\Phi}:\mathbb{R}_+\times\R^N\times E\to E$$ such that for all $l\in\R^N, z\in E$, the function $t\mapsto\hat{\Phi}(t,l,z)$ solves
		\begin{align*}
		\frac{dz}{dt} = \sum_{j=1}^Nl_j\hat{\v}_j(z),\ \ t\geq 0, \ \ z(0)=z.
		\end{align*}
		Let us assume that $\varphi:E\to E$ is a $C^1$-diffeomorphism. Then one can show that for all $l\in\R^N$ and $t\geq 0$
		\begin{align}\label{P1}
		\hat{\Phi}(t,l,z) = \varphi\big(\Phi(t, l, y)\big)\ \textrm{where}\ z=\varphi(y),\ y\in E.
		\end{align}	
		Then from the It\^o's formula \eqref{eqn-Ito2}, we deduce
		\begin{align*}	
		Z(t)&=Z_0+\int_0^t \hat{\v}_0(Z(s))\, ds+\int_0^t\int_{B^c}\Big[\hat{\Phi}\big(1,l,Z(s-)\big)-Z(s-)\Big] \eta(ds, dl)\\
		&\quad\quad+\int_0^t\int_{B}\Big[\hat{\Phi}\big(1,l,Z(s-)\big)-Z(s-)\Big]\tilde{\eta}(ds, dl)\\
		&\quad\quad+\int_0^t\int_{B}\Big[\hat{\Phi}\big(1,l,Z(s)\big)-Z(s)-\sum_{j=1}^N l_j\hat{\v}_j(Z(s))\Big]\nu(dl) ds.
		\end{align*}
		This proves $Z(t)=\varphi(Y(t))$ is an $E$-valued process satisfying
		$$dZ=\hat{\v}_0(Z(t))\,dt + \hat{\v}(Z(t))\diamond dL(t),\ \  Z_0=\varphi(Y_0).$$
		Converse part can similarly be proven.
	\end{proof}

	\section{A simple convergence result}\label{sec-App C}
\begin{lemma}\label{lem-App C-01} Suppose that for each $n\in\mathbb{N}$, a function  $f_n \in \mathbb{D}([0,T],\mathbb{R})$ is constant and that for some
	$f \in \mathbb{D}([0,T],\mathbb{R})$, 
	$f_n \to f$ in $\mathbb{D}([0,T],\mathbb{R})$ as $n\to \infty.$ Then $f$ is also a constant function and $f_n \to f$ in $C([0,T],\mathbb{R})$ as $n\to \infty.$ 	
\end{lemma}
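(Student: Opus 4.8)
The plan is to reduce everything to the characterization of Skorokhod convergence given in Proposition \ref{cadlagTopology}(b). First I would apply that proposition to the hypothesis $f_n \to f$ in $\mathbb{D}([0,T],\R)$ to obtain a sequence $(\lambda_n)_{n\in\N} \subset \Lambda$ of increasing homeomorphisms of $[0,T]$ with $\sup_{t\in[0,T]} |\lambda_n(t)-t| \to 0$ and $\sup_{t\in[0,T]} |f_n(\lambda_n(t)) - f(t)| \to 0$ as $n\to\infty$. The key observation is that the time changes $\lambda_n$ are irrelevant here: writing $f_n \equiv c_n$ for the constant value of $f_n$, we have $f_n(\lambda_n(t)) = c_n$ for every $t\in[0,T]$, so the second estimate collapses to $\sup_{t\in[0,T]} |c_n - f(t)| \to 0$.

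From this uniform estimate I would next extract the limit. For any fixed $t\in[0,T]$ and all $n,m$ one has $|c_n - c_m| \le |c_n - f(t)| + |f(t) - c_m|$, so $(c_n)_{n\in\N}$ is Cauchy in $\R$ and hence converges to some $c\in\R$. Passing to the limit $n\to\infty$ in $\sup_{t\in[0,T]} |c_n - f(t)| \to 0$ then forces $f(t) = c$ for every $t\in[0,T]$, i.e. $f$ is the constant function with value $c$. Finally, since both $f_n$ and $f$ are constants, $\sup_{t\in[0,T]} |f_n(t) - f(t)| = |c_n - c| \to 0$, which is precisely convergence in $C([0,T],\R)$ with the supremum norm.

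The argument is elementary and I do not expect any genuine obstacle; the only point that deserves a line of care is identifying the constant value $c$ as $\lim_n c_n$ before concluding that $f$ is constant, rather than merely asserting uniform convergence. It may also be worth remarking explicitly that precomposition with a homeomorphism leaves a constant function unchanged, which is the reason the Skorokhod metric reduces to the uniform metric on the (closed) subset of constant functions.
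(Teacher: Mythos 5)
Your proposal is correct and follows essentially the same route as the paper: both invoke Proposition \ref{cadlagTopology}(b) and exploit that $f_n(\lambda_n(t))=c_n$, so the time changes drop out and Skorokhod convergence collapses to $\sup_{t\in[0,T]}|c_n-f(t)|\to 0$. The only cosmetic difference is that you first extract the limit $c$ of $(c_n)$ via a Cauchy argument before concluding $f\equiv c$, whereas the paper deduces uniform convergence directly and shows constancy of $f$ by estimating $|f(t)-f(s)|$; both are equally valid.
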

\begin{proof}
	Let us denote, for each $n\in\mathbb{N}$, the value of the function $f_n$ by $c_n$, for some $c_n \in \mathbb{R}$. By 
	part (b) of Proposition \ref{cadlagTopology}
	there exists a sequence  $\left(\lambda_n\right)\in \Lambda^\N$  such that
	\begin{align}\label{eqn-App C-01}
	\sup_{t\in [0,T]} \vert \lambda_n(t)-t\vert \to 0
	\end{align}
	and 
	\begin{align}\label{eqn-App C-02}
	\sup_{t\in [0,T]} \vert f_n(\lambda_n(t))-f(t) \vert \to 0,\qquad n\to \infty.
	\end{align}
	This yields 
	\begin{align*}
	\sup_{t\in [0,T]} \vert f_n(t)-f(t) \vert =\sup_{t\in [0,T]} \vert c_n-f(t) \vert=\sup_{t\in [0,T]} \vert f_n(\lambda_n(t))-f(t) \vert \to 0,\qquad n\to \infty.
	\end{align*}
	Moreover, \eqref{eqn-App C-02} implies 
		\begin{align*}
		\vert f(t)-f(s)\vert\le& \vert f(t)-c_n\vert+\vert c_n-f(s)\vert=\vert f(t)-f_n(\lambda_n(t))\vert+\vert f_n(\lambda_n(s))-f(s)\vert\to 0
%
		\end{align*}
		as $n\to \infty$ 	
		for $s,t\in[0,T].$ Hence, $f$ is a constant function as claimed. 
%
%
%
%
%
%
%
%
\end{proof}
We conclude this section with the following result. 
\begin{corollary}\label{cor-App C-02} 
	Let $n\in\N_0$ and $c\geq 0.$ Then, the set 
	\[
	\mathcal{S}=\bigl\{u \in \mathbb{D}([0,T],H_n): \norm{u(t)}_H=c \mbox{ for all } t\in [0,T] \bigr\}
	\]
	is closed in $\mathbb{D}([0,T],H_n)$.
\end{corollary}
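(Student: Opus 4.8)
The plan is to prove that $\mathcal{S}$ is \emph{sequentially} closed in $\mathbb{D}([0,T],H_n)$. Since $H_n$ is finite dimensional, $(H_n,\norm{\cdot}_H)$ is a complete separable metric space, so by Proposition \ref{cadlagTopology}(a) the space $\mathbb{D}([0,T],H_n)$ is metrizable and sequential closedness coincides with closedness. Thus I would start from a sequence $(u_k)_{k\in\N}\subset\mathcal{S}$ together with $u\in\mathbb{D}([0,T],H_n)$ such that $u_k\to u$ in $\mathbb{D}([0,T],H_n)$, and the goal is to show that $\norm{u(t)}_H=c$ for every $t\in[0,T]$, i.e.\ that $u\in\mathcal{S}$.

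The first step is to apply Proposition \ref{cadlagTopology}(b) with $(\mathbb{S},\df)=(H_n,\norm{\cdot}_H)$: the convergence $u_k\to u$ provides a sequence $(\lambda_k)_{k\in\N}\subset\Lambda$ of increasing homeomorphisms of $[0,T]$ with
\[
\sup_{t\in[0,T]}\vert\lambda_k(t)-t\vert\to 0,\qquad \sup_{t\in[0,T]}\norm{u_k(\lambda_k(t))-u(t)}_H\to 0,\qquad k\to\infty.
\]
The only structural input I need is that membership in $\mathcal{S}$ is invariant under reparametrization: since $\lambda_k(t)\in[0,T]$ and $u_k\in\mathcal{S}$, we have $\norm{u_k(\lambda_k(t))}_H=c$ for all $t\in[0,T]$ and all $k\in\N$. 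Then, fixing $t\in[0,T]$, the reverse triangle inequality gives
\begin{align*}
\bigl\vert\,\norm{u(t)}_H-c\,\bigr\vert&=\bigl\vert\,\norm{u(t)}_H-\norm{u_k(\lambda_k(t))}_H\,\bigr\vert\\
&\le\norm{u(t)-u_k(\lambda_k(t))}_H\le\sup_{s\in[0,T]}\norm{u(s)-u_k(\lambda_k(s))}_H,
\end{align*}
and the last quantity tends to $0$ as $k\to\infty$ by the second convergence above. Since the left-hand side does not depend on $k$, it must vanish, i.e.\ $\norm{u(t)}_H=c$; as $t\in[0,T]$ was arbitrary, this shows $u\in\mathcal{S}$, hence $\mathcal{S}$ is closed.

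I do not expect a genuine obstacle here: this is the Hilbert-space-valued analogue of Lemma \ref{lem-App C-01}, and the argument is essentially identical. The one point requiring care is that one cannot argue via naive pointwise convergence $u_k(t)\to u(t)$, which may fail at the jump times of $u$; it is therefore essential to work with the time changes $\lambda_k$ from Proposition \ref{cadlagTopology}(b), and this costs nothing precisely because the defining condition of $\mathcal{S}$ is preserved under composition with any element of $\Lambda$.
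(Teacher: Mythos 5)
Your proof is correct. It differs from the paper's only in organization: the paper composes with the ($H_n$-Lipschitz) norm map to reduce to real-valued \cadlag functions, so that the closedness of $\mathcal{S}$ follows from Lemma \ref{lem-App C-01} (a Skorohod limit of constant functions is constant), whereas you inline that lemma's mechanism directly at the level of $H_n$: you invoke Proposition \ref{cadlagTopology}(b) to obtain the time changes $\lambda_k$, use that the defining condition of $\mathcal{S}$ is invariant under reparametrization, and conclude by the reverse triangle inequality. The two arguments carry the same mathematical content; yours is slightly more self-contained for this corollary, while the paper's factorization through Lemma \ref{lem-App C-01} has the advantage that this scalar lemma is reused separately (in Step 2 of the proof of Theorem \ref{mainTheorem}, applied to the limit process $v$). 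Your preliminary remark that sequential closedness suffices because $\mathbb{D}([0,T],H_n)$ is metrizable (Proposition \ref{cadlagTopology}(a), $H_n$ being finite dimensional hence complete and separable) is also correct, and your warning that one cannot argue via pointwise convergence at jump times is exactly the reason the time changes are needed.
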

\begin{proof} Take an $\mathcal{S}$-valued sequence $(u_k)$ such that $u_k \to u$ in $\mathbb{D}([0,T],H_n)$ for some $u \in \mathbb{D}([0,T],H_n)$. For $t\in [0,T]$ and $k\in\N,$ we define $f_k(t)=\norm{u_k(t)}_H$ and 
	$f(t)=\norm{u(t)}_H.$
	Since the $H$-norm function is Lipschitz on $H_n$, we infer that $f_k \to f$ in $\mathbb{D}([0,T],\mathbb{R})$.  In view of Lemma \ref{lem-App C-01} we obtain $f(t)=c$ for all $t\in[0,T]$ which implies $u \in \mathcal{S}$.
	
\end{proof}

\end{appendix} 
%

\par\medskip\noindent

\end{document}